\title{A new approach to light bulb tricks: Disks in 4-manifolds}
\author{Danica Kosanovi\'c}
\address{LAGA, Universit\'e Sorbonne Paris Nord (Paris 13)}
\curraddr{Department of Mathematics, ETH Z\"urich}
\email{danica.kosanovic@math.ethz.ch}
\author{Peter Teichner}
\address{Max-Planck-Institut f\"ur Mathematik, Bonn}
\email{teichner@mac.com}
\begin{document}

\begin{abstract}
    For a 4-manifold $M$ and a knot $k\colon\S^1\hra\partial M$ with dual sphere $G\colon\S^2\hra\partial M$, we compute the set $\Dk$ of smooth isotopy classes of neat embeddings $\D^2\hra M$ with boundary $k$, using an invariant going back to Dax.
    Moreover, we construct a group structure on $\Dk$ and show that it is usually neither abelian nor finitely generated.
    We recover all previous results for isotopy classes of spheres with framed duals and relate the group $\Dk$ to the mapping class group of $M$.
\end{abstract}

\maketitle

%%%%%%%%%%%%%%%%%%%%%%%%%%%%%%%%%%%%%%%%%%%%%
\section{Introduction and main results}\label{sec:intro}

The failure of the Whitney trick makes any question about embedded surfaces in 4-manifolds notoriously difficult to answer. Under certain assumptions, including the existence of algebraic dual spheres, Freedman's disk/sphere embedding theorems construct topological embeddings, and in some cases one can also determine the topological isotopy classes. It therefore came as a huge surprise to the 4-manifold community, when Dave Gabai proved his light bulb theorem (LBT) for embedded spheres $\S^2\hra N^4$ with common embedded dual sphere in the \emph{smooth} category~\cite{Gabai-spheres}. This was inspired by the well-known 3-dimensional LBT that says that the homotopy and isotopy classification of knots $\S^1\hra N^3$ with common dual sphere agree for any 3-manifold $N$. Our main result is the following LBT, generalizing the simply-connected case in \cite[Thm.0.6(i)]{Gabai-disks}. 

Two submanifolds are \emph{dual} if they intersect transversely in a single point. A map $K\colon X\to Y$ between smooth manifolds is \emph{neat} if it is smooth, intersects $\partial Y$ transversely and $K^{-1}(\partial Y)=\partial X$. Throughout this paper $M$ will denote a smooth, oriented 4-manifold with nonempty boundary, and embeddings and isotopies are smooth.
\begin{theorem}\label{thm-intro:Dax-classifies}
    For a knot $k\colon\S^1\hra\partial M$ that has a dual sphere in $\partial M$, %, i.e.\ an embedded sphere that meets $k$ transversely in a single point. 
    two neat embeddings $\D^2\hra M$ with boundary $k$ are isotopic (rel.\ $k$) if and only if they are homotopic (rel.\ $k$) and their relative $\Dax$ invariant vanishes.
\end{theorem}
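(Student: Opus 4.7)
The forward implication is immediate: the relative $\Dax$ invariant is by design an invariant of smooth isotopy rel $k$, and isotopic disks are certainly homotopic rel $k$. The substantial direction is the converse, and my plan is to promote a given homotopy to an isotopy through a controlled sequence of Whitney moves enabled by the dual sphere $G$.

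I would begin by fixing a generic smooth homotopy $H\colon\D^2\times I\to M$ between $D_0$ and $D_1$, arranged so that away from finitely many times $t_1<\cdots<t_n$ it is an isotopy through immersions, and at each $t_i$ either a finger move (birth of a transverse pair of self-intersections) or a Whitney move (death of such a pair) occurs. Equivalently, the trace is a generic immersion of $\D^2\times I$ into $M$ whose double-point set is a 1-manifold, with the finger and Whitney events recorded as its critical points under projection to $I$. Each event is decorated with a sign and an element of $\pi_1 M$, obtained by closing up the two sheets of the double point with a reference arc; the relative $\Dax$ invariant is then, up to its built-in indeterminacy, the resulting signed sum in the group ring $\mathbb Z[\pi_1 M\setminus 1]$.

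Vanishing of $\Dax(D_0,D_1)$ then allows me to modify $H$ so that the events pair up into cancelling couples: a finger move and a Whitney move with equal $\pi_1$-label and opposite signs. Geometrically, each such couple asks for an immersed Whitney disk in $M$ pairing the two offending self-intersections; the $\pi_1$-matching guarantees existence of such a disk, but it will in general meet the sheets of $H$ in excess intersections and may carry an incorrect framing. Here the dual sphere $G$, pushed slightly off $\partial M$ into the interior of $M$, enables the light-bulb trick: tubing the Whitney disk into parallel copies of $G$ removes the excess intersections with the sheets, and, because $G$ is a sphere, simultaneously corrects its framing, precisely as in Gabai's proof for spheres. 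Once every Whitney disk has been made embedded, correctly framed, and pairwise disjoint, performing the associated Whitney moves converts $H$ into a homotopy through embeddings, that is, an isotopy rel $k$.

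The technical heart of the argument, and the step I expect to be the main obstacle, is verifying that the signed $\pi_1$-count of the finger and Whitney events really does compute $\Dax(D_0,D_1)$, and that the indeterminacy appearing in the definition of $\Dax$ is precisely the one absorbed by the tubing along $G$. This requires careful tracking of how the tubing changes the $\pi_1$-labels of any residual intersections and of how $G\subset\partial M$ can be pushed into the interior disjointly from $D_0$ and $D_1$ in a way compatible with the homotopy $H$. Neatness of the disks and the location of $G$ inside $\partial M$ are exactly what keep this bookkeeping feasible, and are what distinguish this disk version from the earlier spherical light-bulb theorems.
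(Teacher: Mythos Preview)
Your approach is fundamentally different from the paper's and contains a genuine gap. The paper does not argue geometrically via finger moves, Whitney disks, and tubing into $G$; it deduces Theorem~\ref{thm-intro:Dax-classifies} from Theorem~\ref{thm-intro:Dax-values}, which in turn rests on the space-level identification $\Emb(\D^2,M;k)\simeq\Omega\Emb^\e(\D^1,M_G;k_0^\e)$ of Theorem~\ref{thm:space-LBT} and the Dax isomorphism for arcs (Theorem~\ref{thm:Dax-arcs}). Once $\pi_0$ of the disk space is identified with $\pi_1$ of a space of arcs, the extension of Proposition~\ref{prop:central} shows directly that homotopic disks with vanishing relative $\Dax$ lie in the same isotopy class.

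The gap is precisely the one you flag at the end: the signed $\pi_1$-count of finger and Whitney events in a generic homotopy of disks does \emph{not} compute $\Dax(K_0,K_1)$. The $\Dax$ invariant counts double points of $\wt H\colon\I^2\times\D^1\to\I^2\times X$ (Definition~\ref{def:Dax(H)}) using the order $\theta^-<\theta^+$ along the foliating arcs of $\D^2$, and this ordering is exactly what lifts the usual $g\sim -\bar g$ ambiguity of self-intersection counts. Your double-point loops carry no such ordering, so your count lands in $\Z[\pi]/\langle g+\bar g\rangle$ and recovers only $\FQ=q(\Dax)$ (Corollary~\ref{cor:Dax-FQ}). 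Since $\Dax$ is strictly finer than $\FQ$ in general (Example~\ref{ex:boundary-conn-sum}, Lemma~\ref{lem:M-c}), your Whitney-disk argument cannot separate disks that $\Dax$ distinguishes but $\FQ$ does not. The paper is explicit that a Gabai/Schneiderman--Teichner style argument of the kind you sketch has not been carried out for disks, and that it \emph{can} be made to work only when the boundary component containing $G$ is $\S^1\times\S^2$ (Remark~\ref{rem:deriving-disks-from-spheres}), because that is exactly the case in which $\Dax$ and $\FQ$ coincide (Corollary~\ref{cor:S1xS2}).
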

In Section~\ref{sec:spheres} we show how this LBT for disks implies all previous results for spheres \cite{Gabai-spheres,ST-LBT}, but for its proof we use very different techniques, e.g.\ the work of Dax in the 1970's on homotopy groups of embedding spaces~\cite{Dax}. It turns out that the classification for disks is more subtle in the sense that the $\Dax$ invariant for disks realizes all values in a much larger group than the Dax invariant for spheres, see Example~\ref{ex:boundary-conn-sum}. It is an open problem whether Theorem~\ref{thm-intro:Dax-classifies} continues to hold if a common dual sphere only exists in the interior of $M$ but see \cite{Schwartz-disks} for a special case. 

Even though more general, we believe that our proof is the simplest and most conceptual approach to the LBT.
We realized that space level methods of Cerf and Dax, extended to all dimensions in \cite{KT-highd} and recalled in Section~\ref{sec:space-level} below, can be combined to not only construct the Dax invariant $\Dax$ but show that it \emph{completely classifies isotopy}. We can also identify the image of $\Dax$ and describe the set of isotopy classes geometrically, see Theorems~\ref{thm-intro:Dax-values} (which directly implies \ref{thm-intro:Dax-classifies}) and~\ref{prop-intro:4-term}. Since this classification follows by delooping our space of embedded disks, we also get a group structure on the set of isotopy classes, see Theorem~\ref{thm-intro:groups}. In the rest of the introduction, apart from stating these results (with proofs in Section~\ref{sec:main-proofs}), we also explain the relative Dax invariant for general neat disks and give some examples.
% we explain how this relates to mapping class groups of certain 4-manifolds and we develop many computational techniques.

%%%%%%%%%%%%%%%%%%%%%%%%%%%%%%%%%%
\subsection{The relative Dax invariant for neat disks}\label{sec-intro:Dax}

Assume $M$ is an arbitrary smooth, oriented 4-manifold with a basepoint in $\partial M$ and 
fundamental group $\pi\coloneqq\pi_1M$. Let $\RGR$ be the free abelian group on the set of its nontrivial elements, with the usual involution induced by $\sigma(g)=\ol{g}\coloneqq g^{-1}$. Following work of Jean-Pierre Dax~\cite{Dax} we define a homomorphism
\[
    \md\colon \pi_3(M)\to \RGR^\sigma\coloneqq\{r\in \RGR: r=\ol{r}\}
\]
of abelian groups, recently also made explicit by Gabai~\cite{Gabai-disks}. The element $\md(a)$ counts self-intersections of a special kind of generic immersion $\D^3\imra\D^2 \times M^4$ obtained from $a\colon\S^3\to M$, with fundamental group elements defined by a clever choice of sheets at each double point, avoiding the usual indeterminacy $g=-\ol{g}$ for self-intersection invariants in dimension~6.

The homomorphism $\md$ is an interesting invariant of the 4-manifold $M$, only depending on the component of $\partial M$ that contains the basepoint. In Theorem~\ref{thm:Hopf} we show that $\md$ combines Wall's self-intersection invariants $\mu_2$ and $\mu_3$ for maps of 2- respectively 3-spheres to $M$, enabling computations. In particular, in Proposition~\ref{prop:Whitehead} we compute the Whitehead product of $a_i\in\pi_2M$ in terms of the reduced intersection form $\lambdabar$ of $M$ (see Proposition~\ref{prop:commutators}):
\begin{equation}\label{eq:Whitehead}
     \md_u\big([a_1,a_2]_{\mathsf{Wh}}\big)=\lambdabar(a_1,a_2)+\lambdabar(a_2,a_1).
\end{equation}
Moreover, in \cite{K-Dax} the surprising behavior of $\md$ under the $\pi$-action on $\pi_3M$ was exhibited, with an important special case as follows.
\begin{example}\label{ex:interior-conn-sum}
    In the connected sum $M=(\D^2 \times \S^2) \# M'$ the separating 3-sphere $C$ has $\md(C)=0$ as it is embedded. 
    However, the class $g\cdot [C]\in \pi_3M$, obtained by the usual $\pi$-action on $\pi_3M$, is represented by adding a tube along $g$ to $C$, and is no longer embedded.
    We will show that $\md(g\cdot [C])= g + \ol{g}$ in Lemma~\ref{lem:g+g-inv}. This will be used in Section~\ref{sec:spheres-proofs} to prove that for $M$ a connected sum as above, the Dax invariant reduces to the Freedman--Quinn invariant of~\cite{ST-LBT}, suitably adapted to disks in Definition~\ref{def:FQ}.
\end{example}
Now fix a knot $k\colon\S^1\hra\partial M$, not necessarily having a dual. Then the count of Dax also works for a homotopy $H$ (rel.\ boundary) between neat embeddings $K_0, K_1\colon\D^2\hra M$ with $\partial K_0=\partial K_1=k$, leading to the sum of signed group elements $\Dax(H)\in \RGR^\sigma$. This behaves additively under gluing homotopies, and the element $\md(a)$ above results from the special case where $H$ is the self-homotopy obtained from $K_0 \times I$ by an interior connected sum with $a\colon\S^3\to M$. As a consequence, the class $\Dax(K_0,K_1)\coloneqq [\Dax(H)]\in \RGR^\sigma/\md(\pi_3M)$ is independent of $H$; cf.\ {\cite[Cor.0.5]{Gabai-disks}}.
\begin{lemma}\label{lem:Dax-invt}
    The relative Dax invariant $\Dax(K_0,K_1)\in \RGR^\sigma/\md(\pi_3M)$ is defined for homotopic neat embeddings $K_0, K_1\colon\D^2\hra M$ and is an obstruction for the existence of an isotopy  between them (all rel.\ boundary). 
\end{lemma}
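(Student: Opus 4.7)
My plan is to first extend the construction of $\md$ from self-homotopies to arbitrary homotopies in order to obtain a well-defined count $\Dax(H)\in\RGR^\sigma$, then show that passing to the quotient by $\md(\pi_3M)$ eliminates the dependence on the choice of $H$, and finally read off the obstruction property from the definition.

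For a generic homotopy $H\colon\D^2\times I\to M$ rel boundary from $K_0$ to $K_1$, I would form the trace map $(x,t)\mapsto(x,H(x,t))$ into $\D^2\times M$; after a small perturbation its double points are isolated, and each contributes a signed fundamental-group element via Dax's choice of sheet labels (the same device that forces values in $\RGR^\sigma$ rather than only $\RGR/(g+\ol g)$ in the definition of $\md$). The resulting sum $\Dax(H)\in\RGR^\sigma$ is additive under concatenation of homotopies and changes sign under reversal.

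The key step is independence of $H$. Given two homotopies $H_0,H_1\colon K_0\rightsquigarrow K_1$ rel boundary, the concatenation $F\coloneqq\overline{H_0}\cdot H_1$ is a self-homotopy of $K_0$ rel boundary, so additivity gives
\[
    \Dax(H_1)-\Dax(H_0)=\Dax(F).
\]
Because $K_0$ is a disk and hence null-homotopic, $F$ determines a class $[F]\in\pi_3M$; by transversality inside $\D^2\times I$, $F$ can be homotoped rel boundary to coincide with $K_0\times\mathrm{id}_I$ outside a small 3-ball in which it is the interior connected sum with a sphere representing $[F]$. Since $\Dax$ depends only on the rel-boundary homotopy class of the homotopy, and the paragraph before the lemma identifies the count on this standard model as $\md([F])$, we conclude $\Dax(F)=\md([F])\in\md(\pi_3M)$, so $\Dax(K_0,K_1)$ is well defined in $\RGR^\sigma/\md(\pi_3M)$.

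For the obstruction property, if $K_0$ and $K_1$ are ambient isotopic rel $k$ we may take $H$ to be an isotopy, i.e.\ a homotopy through embeddings. Its trace in $\D^2\times M$ is then an embedding with no double points, so $\Dax(H)=0$ and therefore $\Dax(K_0,K_1)=0$. I expect the main obstacle to be establishing that $\Dax$ of a self-homotopy depends only on its rel-boundary homotopy class: this requires a generic two-parameter analysis on $\D^2\times I\times I$, showing that births and deaths of double points in codimension-$1$ strata cancel in pairs with matching signs and group labels, which is the Cerf/Dax-type transversality input underlying the space-level framework of Section~\ref{sec:space-level}.
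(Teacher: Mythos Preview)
Your overall strategy---define $\Dax(H)$ via a trace, show additivity, concatenate two homotopies to a self-homotopy and relate it to $\md$, observe isotopies give zero---mirrors the paper's (Lemma~\ref{lem:HDax} plus Corollary~\ref{cor:Dax-FQ}). There are, however, two genuine gaps.

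\textbf{The trace map.} The $\D^2$ in ``$\D^3\imra\D^2\times M$'' from the introduction is the \emph{parameter} square, not the embedded disk. Concretely, the paper foliates $\D^2$ by arcs $a_t\colon\D^1\hra\D^2$ and sets $\wt H(s,t,\theta)=(s,t,H_s(a_t(\theta)))\in\I^2\times M$; double points occur at fixed $(s,t)$ with $\theta_1<\theta_2$, and this $\D^1$-order is Dax's sheet choice (Definition~\ref{def:Dax(H)}). Your map $(x,t)\mapsto(x,H(x,t))$ projects away the time coordinate instead, so its double points record self-intersections of individual tracks $t\mapsto H(x,t)$. These can occur even when every $H_t$ is an embedding, so your claim that an isotopy has embedded trace is incorrect as stated; moreover the natural basepoint of such a track is $K_0(x)$, which varies with $x$, so the associated group elements are not canonically based. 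With the correct $\wt H$ the vanishing on isotopies is immediate, since each arc $H_s\circ a_t$ is then embedded.

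\textbf{The target $\RGR^\sigma$.} Dax's sheet choice lifts the count from $\RGR/\langle g+\ol g\rangle$ to $\RGR$, but does \emph{not} by itself force $\sigma$-invariance. The paper proves $\Dax(H)\in\RGR^\sigma$ for neat disks separately (Corollary~\ref{cor:Dax-FQ}): one compares $\wt H$ with an alternate trace $\wh H$ landing in $\I\times(\I\times M)$, identifies $\mu_3(\wh H)$ with $q(\Dax(H))$, and uses that $k\subset\partial M$ to apply the geometric Lemma~\ref{lem:mu_3}, giving $\mu_3(\wh H)=\ol{\mu_3(\wh H)}$. For half-disks the free boundary arc is neat in $M$, this argument fails, and indeed $\Dax$ need not be $\sigma$-fixed (Remark~\ref{rem:Dax-neat}).

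Your argument for independence of $H$ is essentially correct once the trace is fixed; the ``homotopy of homotopies'' invariance you flag is precisely the well-definedness of $\Da$ on $\pi_2(\Map,\Emb,u)$, which the paper imports as a black box via Theorem~\ref{thm:Dax-arcs} rather than redoing a two-parameter Cerf analysis.
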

A similar definition is given for so-called \emph{half-disks} in Lemma~\ref{lem:HDax}, where the value of the $\Dax$ invariant is in general not fixed by the involution. 

In \cite[Fig.2, Thm.4.9]{Gabai-disks} the $\Dax$ invariant relative to $K_0$ was realized, using the simplest type of concordance on $K_0$ with a unique minimum and index 1 critical point: For $g\in\pi\sm 1$, start with an unknotted sphere $S_0\subseteq\R^4$ next to a basepoint of $K_0$ and add a tube along an embedded arc $A$ that connects $K_0$ to $S_0$ to obtain a disk $K_g$. This arc $A$ first runs from $K_0$ through $S_0$ (i.e.\ it intersects once the 3-ball bounding $S_0$), then follows $g$ and ends on $S_0$. The resulting \emph{self-referential} disk satisfies $\Dax(K_0,K_g) = [g + \bar g]$. 

Note that dual spheres are not required for the definition of $\Dax$ nor the self-referential disks.
But the vanishing of $\Dax(K_0,K_1)$ is obviously not a sufficient condition for $K_i$ to be isotopic; e.g.\ for $M=\D^4$ the fundamental group of the complements is a simple additional invariant. 

% Moreover, not every element in $\RGR^\sigma/\md(\pi_3M)$ is in general realized as the $\Dax$ invariant relative to a given $K_0$. However, in the presence of a dual sphere in the boundary the situation simplifies drastically; we study this setting in the rest of the introduction.

%%%%%%%%%%%%%%%%%%%%%%%%%%%%%%%%%%
\subsection{Classification of neat disks via a geometric group action} \label{sec-intro:4-term-ex-seq}
We provide new realization and classification results for the $\Dax$ invariant in the presence of a dual sphere in the boundary.
\begin{theorem}\label{thm-intro:Dax-values} 
      Assume that $k\colon\S^1\hra\partial M$ is null homotopic in $M$ and has a dual $G\colon\S^2\hra\partial M$ (i.e.\ $k\pitchfork G=pt$). Then the set $\Dk$ of isotopy classes of neat embeddings $\D^2\hra M$ with boundary $k$ is nonempty and
\begin{enumerate}
    \item  the abelian group $\RGR$ acts on $\Dk$ via finger moves and Norman tricks, written $(r,K)\mapsto K+\fm(r)^G$, see Figure~\ref{fig:action};
    \item  the action of $\RGR^\sigma$ preserves the homotopy class of disks (rel.\ boundary), is transitive on disks in the same homotopy class and has stabilizer subgroups $\md(\pi_3M) \leq \RGR^\sigma$, independently of $K$;
    \item  the relative $\Dax$ invariant inverts this action in the sense that for $r\in \RGR^\sigma$ and $K\in\Dk$ we have $\Dax(K,K+\fm(r)^G)=[r]$.
\end{enumerate}
\end{theorem}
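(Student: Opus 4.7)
The plan is to combine a direct geometric construction of the action via finger moves and Norman tricks with the space-level delooping from Section~\ref{sec:space-level}, which (building on Dax's work) provides the decisive ingredient that the relative $\Dax$ invariant is a \emph{complete} obstruction to isotopy between homotopic neat disks. Once this completeness statement is in hand, (i)--(iii) follow by direct construction and bookkeeping of $\Dax$.

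For nonemptiness, I would pick a generic immersed disk extending $k$ (possible by the null-homotopy hypothesis) and kill its self-intersections by Norman tricks against parallel copies of $G$. For the action in (i), given $g\in\pi\sm 1$ and $K\in\Dk$, define $K+\fm(g)^G$ by a finger-move-plus-Norman-trick construction along a loop representing $g$, tubing to parallel copies of $G$; bilinear extension yields a set map $\RGR\times\Dk\to\Dk$. Well-definedness on isotopy classes follows from the contractibility of the spaces of choices (of based loops in $K$ and of Norman arcs), since $K$ is a disk. For (iii), note that the finger-Norman construction for symmetric $r\in\RGR^\sigma$ produces an honest generic homotopy from $K$ to $K+\fm(r)^G$ whose double-point count is exactly $r$, so $\Dax(K,K+\fm(r)^G)=[r]$ by definition of the $\Dax$ count.

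To deduce (ii), I would first observe that each Norman tube to $G$ changes the homotopy class of the disk rel.\ $k$ by $\pm[G]\in\pi_2M$, while the finger move itself is a regular homotopy; the total change over the construction of $K+\fm(r)^G$ vanishes precisely when the coefficients of $r$ pair up under the involution $\sigma$, i.e.\ $r\in\RGR^\sigma$. For transitivity on a homotopy class, given homotopic $K_0,K_1\in\Dk$ let $r\in\RGR^\sigma$ be any lift of $\Dax(K_0,K_1)\in\RGR^\sigma/\md(\pi_3M)$; combining (iii) with additivity of $\Dax$ under concatenation of homotopies gives $\Dax(K_0+\fm(r)^G,K_1)=0$, and completeness of $\Dax$ then supplies an isotopy from $K_0+\fm(r)^G$ to $K_1$. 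Specializing to $K_0=K_1=K$ yields the stabilizer claim: an element $r\in\RGR^\sigma$ fixes the isotopy class of $K$ iff $[r]=\Dax(K,K+\fm(r)^G)=0$ in $\RGR^\sigma/\md(\pi_3M)$, iff $r\in\md(\pi_3M)$; this subgroup is independent of $K$ since $\md$ depends only on $M$.

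The main obstacle is the completeness statement that $\Dax=0$ implies isotopy. This is the core technical input of the paper, obtained via the delooping argument of Section~\ref{sec:space-level}, which converts Dax's classical calculations on higher homotopy groups of embedding spaces into the required $\pi_0$-level injectivity of the relative $\Dax$ invariant. Everything else in Theorem~\ref{thm-intro:Dax-values} reduces to geometric bookkeeping with finger moves, Norman tubes, and the additivity of $\Dax$.
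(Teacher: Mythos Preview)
Your proposal is correct and follows essentially the same route as the paper: invoke the space-level delooping for completeness of $\Dax$, then handle the action, its effect on homotopy classes, and its inversion by $\Dax$ via direct geometric bookkeeping with finger moves and Norman tricks.

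Two places where the paper is more careful than your sketch. First, well-definedness of $K+\fm(r)^G$ is argued \emph{indirectly}: any set of choices produces a disk homotopic to $K$ with the same relative $\Dax$ (Proposition~\ref{prop:Dax-inverts}), and completeness then forces a unique isotopy class. Your ``contractibility of the spaces of choices, since $K$ is a disk'' is not quite right---the guiding arcs for the finger move and the Norman tubes live in $M$ (or $M\sm K$), not in $K$, and the relevant choice spaces are merely connected (which is what one actually needs, by general position for arcs in a $4$-manifold). Second, the computation $\Dax(K,K+\fm(r)^G)=[r]$ is not just ``by definition of the $\Dax$ count'': the paper passes to half-disks in $M_G$ and exhibits a specific foliation of the canonical homotopy in which exactly one arc is singular, with the correct signed group element (Proposition~\ref{prop:Dax-inverts} and Figure~\ref{fig:fm-foliation}); the homotopy-class change is then identified in Lemma~\ref{lem:homotopy-class}, matching your description.
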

This result implies Theorem~\ref{thm-intro:Dax-classifies}, and in addition gives all possible values of $\Dax$ and a geometric construction of each value.
In particular, if $\pi_1M=1$ we obtain a bijection $\Dk \cong [\D^2,M;k]$, with the set of homotopy classes of maps $\D^2\to M$ with boundary $k$, reproving \cite[Thm.0.6(i)]{Gabai-disks}. 

Our geometric action $(r,K)\mapsto K+\fm(r)^G$ 
of $\RGR$ on $\Dk$ is given as follows, see Figure~\ref{fig:action}. First perform finger moves to $K$ along the group elements that make up $r\in \RGR$ (each finger move guided by $g\in \pi$ introduces a pair of double points with double point loop $g$), and then revert this immersion back into an embedding by adding a tube for each double point (along distinct choices of sheets for the given double point pair) into a parallel copy of $\pm G$. This tubing into the dual is often called the \emph{Norman trick}, and was used in~\cite{ST-LBT} to obtain embedded {Norman spheres}.
\begin{figure}[!htbp]
    \centering
    \includegraphics[width=0.88\linewidth]{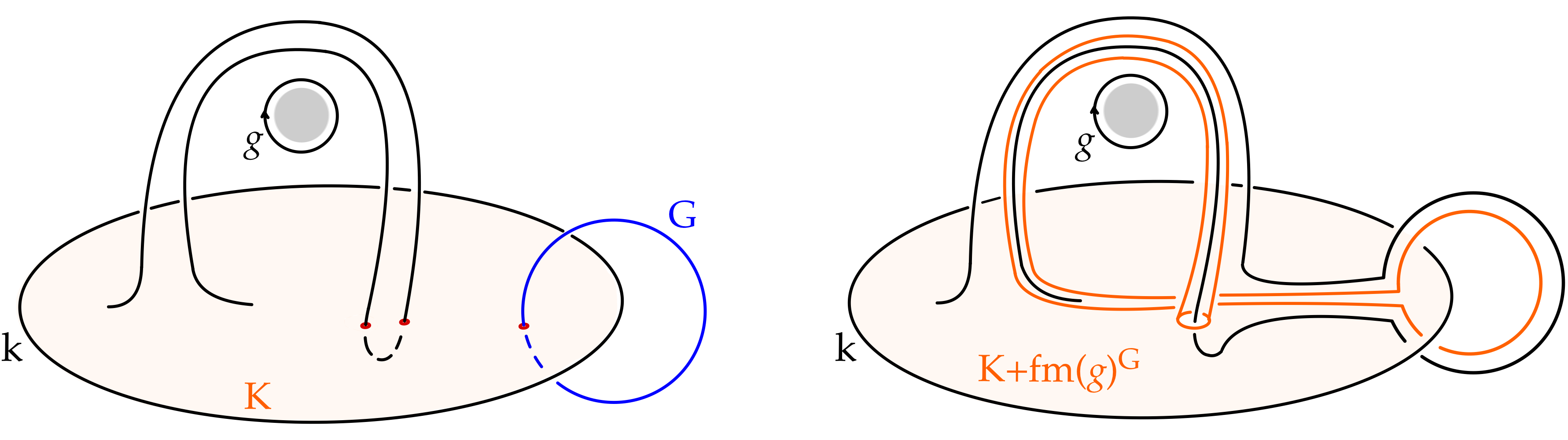}
    \caption{Constructing isotopy classes of disks with fixed boundary $k$ by finger moves along $g$ and Norman tricks on distinct sheets.}
    \label{fig:action}
\end{figure}

On homotopy classes of disks with boundary $k$, we will see in Lemma~\ref{lem:homotopy-class} 
\[
    [K+\fm(r)^G]=[K \# (\ol{r}-r)\cdot G]\in [\D^2,M;k], 
\]
where $\#$ denotes the interior connected sum, cf.\ Figure~\ref{fig:K-tw}. The right hand side equals $[K]$ precisely for $r\in \RGR^\sigma$ by Lemma~\ref{lem:lambda-splits} which explains the appearance of this subgroup in Theorem~\ref{thm-intro:Dax-values}.

% In Propositions~\ref{prop-intro:4-term} and \ref{prop:group-diagram} we use the following convenient notation.
The next result is a convenient way to express all information about our group action, using the following notation.
\begin{defn}\label{def:action}
    For a group $G$ and a set $S$ a group action $a\colon G \times S \to S$ with orbit set $S/G$ will be denoted by 
    $\begin{tikzcd}[cramped,column sep=17pt] 
        G\arrow[squiggly]{r}{a} & S\arrow[two heads]{r} & S/G 
    \end{tikzcd}$.
%     If it happens that the stabilizer groups for points in $S$ are all equal to a fixed subgroup $H\leq G$ then we use a ``short exact sequence'' $\begin{tikzcd}[cramped,column sep=20pt] 
%         G/H\arrow[tail,squiggly]{r}{a} & S\arrow[two heads]{r} & S/G 
%     \end{tikzcd}$.
 
%     Given maps $S \overset{b}{\ra} T \overset{c}{\ra} V$ such that $c$ is surjective and $b$ factors through an injection $b\colon S/G \ira T$ with image $\im(b)=c^{-1}(0)$ for a point $0\in V$, we write a ``$4$-term exact sequence'' of the group action
% \[\begin{tikzcd}
%     \faktor{G}{H} \arrow[tail,squiggly]{r}{a} & S \arrow{r}{b} & T \arrow[two heads]{r}{c} & V.
% \end{tikzcd}
% \]
\end{defn}
\begin{prop}\label{prop-intro:4-term}
    Assume $k\colon\S^1\hra\partial M$ is null homotopic in $M$ and has a dual $G\colon\S^2\to\partial M$. Then our $\RGR^\sigma$-action $(+\fm^G)(r,K)\coloneqq K + \fm(r)^G$ on $\Dk$ from Theorem~\ref{thm-intro:Dax-values} fits into a 4-term exact sequence
    \[\begin{tikzcd}
        \faktor{\RGR^\sigma}{\md(\pi_3M)} \arrow[tail,squiggly]{r}{+\fm^G} 
        &\Dk \arrow{r}{j} 
        & \mbox{}[\D^2,M;k] \arrow[two heads]{r}{\mu_2} 
        & \faktor{\RGR}{\langle\ol{r}{-}r\rangle}
    \end{tikzcd}\]
    where $j$ takes the underlying homotopy class and $\mu_2$ is Wall's reduced self-intersection invariant. The exactness means from left to right: 
\begin{itemize}
\item 
    Every disk in $\Dk$ has stabilizer $\md(\pi_3M) \leq   \RGR^\sigma$,
\item
  $j$ factors through an injection of $\Dk/\RGR^\sigma$,
\item
    $\im(j)=\mu_2^{-1}(0)$,
\item
    $\mu_2$ is surjective.
\end{itemize}
\end{prop}

The invariant $\mu_2$ counts double points (with signs and fundamental group elements, away from the trivial element) of a generic immersion $J\colon\D^2\imra M$ representing the given homotopy class $[J]\in[\D^2,M;k]$. The element $\mu_2[J]$ is an obstruction for representing $[J]$ by an embedding and exactness means that it is the only obstruction in our setting, thanks to the Norman trick along $G$, see the proof of Proposition~\ref{prop-intro:4-term} in Section~\ref{sec:main-proofs}. 

For any disk $K\in\Dk$, note that gluing $-K$ to a map $\D^2\to M$ along the boundary gives a natural bijection 
    \begin{equation} \label{eq:cup-U}
        -K\cup\bull\colon [\D^2,M;k]\overset{\cong}{\ra}\pi_2M.
    \end{equation}
The following class of examples exhibits the richness of $\Dax$. For a framed, null homotopic knot $c\colon\S^1 \hra \S^1 \times \S^2$ define the $4$-manifold $M_c$ by attaching a 2-handle to the boundary of $\S^1 \times \D^3$ along $c$, see Figure~\ref{fig:Kirk} for an example. The group $\pi\coloneqq\pi_1M_c$ is infinite cyclic with generator $t$ represented by the meridian of the dotted circle. The inclusions of $t\cdot\Z[t]$ into  $\Z[\pi\sm 1]^\sigma$ and into $\Z[\pi\sm 1]/ \langle \ol{r} - r \rangle $ are both isomorphisms, so we identify the targets of $\md$ and $\mu_2$ with $t\cdot\Z[t]$.

\begin{figure}[!htbp]
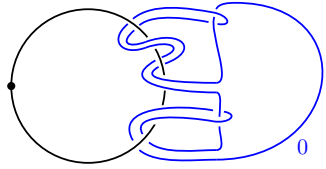

    \centering
    \includestandalone[mode=buildmissing,width=0.35\linewidth]{Figures-4dLBT/fig-Kirk}
    \caption{A handle diagram for $M_c$ with $\mu_2(S_c)=-t+t^2$.}
    \label{fig:Kirk}
\end{figure}
Let $S_c\in\pi_2M_c$ be represented by the sphere built out of the core of the 2-handle, together with a null homotopy of $c$ in $\S^1 \times \S^2$. One can compute $\mu_2(S_c)\in t\cdot\Z[t]$ by undoing $c$ via self-intersections and summing a term $\pm t^\ell$ for each double point, where $\ell$ is the absolute value of the linking number of the double point loop with the dotted circle. The following provides many algebraic possibilities for the image of $\md$; the proof is after Example~\ref{ex:closed-dax}.
\begin{lemma}\label{lem:M-c}
    For $M_c$ as above, the subgroup $\md(\pi_3M_c)$ is equal to the principal ideal $\mu_2(S_c)\cdot \Z[t]\subseteq t\cdot \Z[t]$.
    Moreover, for any polynomial $f\in t\cdot \Z[t]$ there exists a framed null homotopic knot $c\colon\S^1 \hra \S^1 \times \S^2$ with $\mu_2(S_c)=f$. 
    
    Therefore, any group $t\cdot \Z[t]/(f)$ arises as the target $\RGR^\sigma/\md(\pi_3M_c)$ of the invariant $\Dax$ for some $c$.
\end{lemma}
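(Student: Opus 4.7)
The plan is to attack the two assertions of the lemma independently and then combine them: first identify $\md(\pi_3 M_c)$ as the principal ideal generated by $\mu_2(S_c)$, and second realize every polynomial in $t\cdot\Z[t]$ as some $\mu_2(S_c)$. Since $\RGR^\sigma \cong t\cdot\Z[t]$, the final conclusion about $\Dax$-targets is then immediate.

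The first step is to reduce the homotopy theory of $M_c$ to a standard model. Since $c$ is null homotopic in $\partial(\S^1\times\D^3)$, it is also null homotopic in $\S^1\times\D^3 \simeq \S^1$, so the 2-handle attaches (up to homotopy) as a 2-cell along a null homotopic loop in $\S^1$, giving $M_c\simeq\S^1\vee\S^2$. Its universal cover has the homotopy type of $\bigvee_{n\in\Z}\S^2$, so $\pi_2 M_c\cong\Z[t]\cdot S_c$ is free of rank one over the group ring, and $\pi_3 M_c$ is generated as a $\Z[\pi]$-module by the Hopf class $\eta\cdot S_c$ and the Whitehead products $[S_c,t^n\cdot S_c]$ for $n\geq 1$ (plus translates under the $\pi$-action).

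Next, I would compute $\md$ on these generators. Theorem~\ref{thm:Hopf} yields $\md(\eta\cdot S_c)=\mu_2(S_c)+\overline{\mu_2(S_c)}$, which under the identification $\RGR^\sigma\cong t\cdot\Z[t]$ corresponds to $\mu_2(S_c)$ itself, placing $\mu_2(S_c)$ in the image. Applying~\eqref{eq:Whitehead} to $[S_c,t^n S_c]$ and using the reduced intersection form $\lambdabar$ of $M_c$ computes the remaining contributions, as does applying Theorem~\ref{thm:Hopf} to the Hopf classes $\eta\cdot (q\cdot S_c)$ for general $q\in\Z[\pi]$ via the sesquilinearity $\md(\eta\cdot qa)=q(\mu_2(a)+\overline{\mu_2(a)})\bar q$. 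The claim is that the subgroup spanned by all such values coincides with the ideal $\mu_2(S_c)\cdot\Z[t]$.

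For realization, given $f=\sum_{k\geq 1} f_k t^k$, I would start with the unknot $c_0\subset\S^1\times\S^2$ (with $\mu_2(S_{c_0})=0$) and iteratively modify it: for each monomial $f_k\cdot t^k$ perform $|f_k|$ local band-sum or clasp modifications, of sign matching $f_k$, that introduce into any null homotopy of the resulting knot a pair of self-intersections whose double-point loop winds $k$ times around the $\S^1$ factor. The local building block can be modeled on Figure~\ref{fig:Kirk}, and additivity of $\mu_2$ under disjoint local modifications (since the invariant is a quadratic refinement of $\lambda$ that records double points with fundamental-group weights) yields $\mu_2(S_c)=f$.

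The hard part is the precise identification of the image $\md(\pi_3 M_c)$ with the principal ideal. At face value the Whitehead contributions from~\eqref{eq:Whitehead} produce elements of the form $(\mu_2(S_c)+\overline{\mu_2(S_c)})(t^n+t^{-n})$, which are not obviously $\Z[t]$-multiples of $\mu_2(S_c)$ once translated into $t\cdot\Z[t]$; similarly the general $\eta\cdot(q S_c)$ yield $q\bar q\cdot(\mu_2(S_c)+\overline{\mu_2(S_c)})$. Showing that the span of all these values collapses onto exactly $\mu_2(S_c)\cdot\Z[t]$ requires careful bookkeeping of the interplay between the polynomial and involution structures, using the freeness of $\pi_2 M_c$ over $\Z[\pi]$, the Hurewicz-type description of $\pi_3(\S^1\vee\S^2)$, and relations among Whitehead products. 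Once this identification and the realization are in hand, the concluding isomorphism $\RGR^\sigma/\md(\pi_3 M_c)\cong t\cdot\Z[t]/(f)$ follows directly from $\RGR^\sigma\cong t\cdot\Z[t]$.
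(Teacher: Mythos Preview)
Your approach is essentially the paper's: both use $M_c\simeq\S^1\vee\S^2$ to see $H_3\wt M_c=0$, then compute $\md$ on generators of $\pi_3M_c$ via Theorem~\ref{thm:Hopf}. The realization argument is also the same (the paper does finger moves along $t^i$ on the unknot, which is your clasp construction).

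The difference is organizational, and it is exactly what resolves your ``hard part.'' You compute $\md$ directly on Hopf classes and Whitehead products using~\eqref{eq:Whitehead}, landing in $\RGR^\sigma$, and then struggle with the $(t^n+t^{-n})$-type symmetrization. The paper instead exploits the full commutative diagram of Theorem~\ref{thm:Hopf}: since $\pi=\Z$ has no 2-torsion the target of $\mu_3$ is trivial, and since $H_3\wt M_c=0$ the map $-\circ H\colon\Gamma(\pi_2M_c)\to\pi_3M_c$ is an isomorphism. Hence $\im(\md)$ equals the image of $\Gamma(\mu_2)$ pushed through the norm map $g\mapsto g+\ol g$, and that norm map is the identity under the two identifications with $t\cdot\Z[t]$. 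So one only needs $\im\Gamma(\mu_2)$ in $\RGR/\langle\ol g-g\rangle$. On the standard $\Gamma$-basis $a_i\otimes a_i$ and $a_i\otimes a_j+a_j\otimes a_i$ (with $a_i=t^iS_c$), equivariance gives $\mu_2(a_i)=\mu_2(S_c)$, and~\eqref{eq:mu-2-quadratic} gives the cross term as $[\lambdabar(a_i,a_j)]$, which via $\lambdabar(S_c,S_c)=\mu_2(S_c)+\ol{\mu_2(S_c)}$ is identified with $t^{i-j}\mu_2(S_c)$. Working in the quotient $\RGR/\langle\ol g-g\rangle$ rather than in $\RGR^\sigma$ is precisely what makes the involution terms that worried you collapse; there is no residual ``careful bookkeeping'' to do.
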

For example, $t\cdot \Z[t]/(n t^m) \cong \Z^{m-1} \times (\Z/n)^\infty$ arises. The group $t\cdot \Z[t]/(f)$ is finitely generated if and only if the leading coefficient of $f$ is $\pm 1$. By attaching several 2-handles one can see that any finitely generated abelian group can be realized as the target of the $\Dax$ invariant. 

To put this into the context of Theorems~\ref{thm-intro:Dax-classifies} and \ref{thm-intro:Dax-values}, we need a boundary condition $k\colon\S^1\hra \partial M$ with a dual sphere $G$. For $M=M_c$ these exist only if $c$ is a 0-framed unknot, giving exactly Example~\ref{ex:boundary-conn-sum}. However, in the boundary connected sum $M=(\D^2 \times \S^2)\, \natural\, M_c$ we can take $k=\S^1 \times pt$ and $G=0 \times \S^2$, and Theorem~\ref{thm:Hopf} implies $\md(\pi_3M) = \md(\pi_3M_c)= \mu_2(S_c)\cdot \Z[t]$.

%%%%%%%%%%%%%%%%%%%%%%%%%%%%%%%%%%
\subsection{Nilpotent group structure on neat disks}\label{sec-intro:groups}
A surprising by-product of our methods is a group structure on the set of isotopy classes $\Dk$  in Theorem~\ref{thm-intro:Dax-values}. This will enable many calculations and be related to the mapping class group $\pi_0\Diff_\partial(M)$, see Sections~\ref{sec:mcg} and \ref{sec:dax-proofs}.

The group structure on $\Dk$ actually exists at the space level: The space of embeddings $\Emb(\D^2,M;k)$ is homotopy equivalent to a based loop space $\Omega E$ and the new group structure is the one of the fundamental group $\pi_1E$. This is also the key to our proofs and will be discussed in Section~\ref{sec:space-level}.

\begin{figure}[!htbp]
    \centering
    \includegraphics[width=1\linewidth]{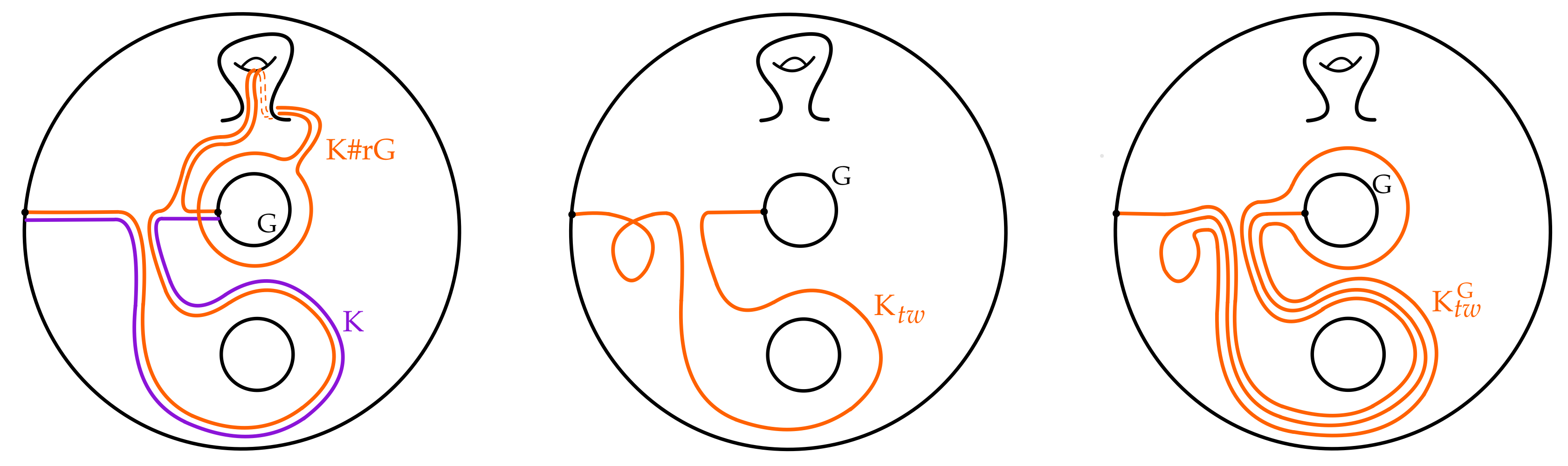}
    \caption{The interior connected sum $K \# rG$, the immersed disk $K_{tw}$ obtained by an interior twist on $K$, and the resolved disk $K_{tw}^G$.}
    \label{fig:K-tw}
\end{figure}
To understand the group structure on $\Dk$, we extend our $\RGR$-action on $\Dk$ to a $\Z[\pi]$-action.
Let $1 \in\pi$ act by sending $K$ to the disk $K+\fm(1)^G\coloneqq K_{tw}^G$ obtained from $K$ by adding one positive \emph{interior twist} as in Figure~\ref{fig:interior-twist} and tubing the resulting double point into $-G$; see Figure~\ref{fig:K-tw} for a dimension reduced picture (arcs in a surface) and Section~\ref{sec:actions} for details. 
The orientations of $G$ and $k$ (and hence $K$) are chosen so that the unique intersection point $k\cap G$ is positive. 

For any $\U\in\Dk$, taking the Euler number $e_\U(K)$ of the normal bundle $\nu K$ relative to the framing on $k$ induced by $\nu\U$, gives a map
\begin{equation}\label{eq:e-U}
    e_\U\colon\Dk\ra\Z,\quad K\mapsto e_\U(K).
\end{equation}
A regular homotopy, like a finger move, does not change this framing information, however, an interior twist changes it by 2. Since $G$ is framed, it follows that $e_\U(\U+\fm(r)^G)$ is twice the coefficient of 1 in $r\in\Z[\pi]$. The decomposition $\Z[\pi]=\RGR \times \Z\cdot 1$ allows us to take the product of a Dax invariant with $e_\U/2$ to get the following main result.
\begin{theorem}\label{thm-intro:groups}
    If $k\colon\S^1\hra\partial M$ has a dual $G\colon\S^2\to\partial M$ then any choice of $\U\in\Dk$ gives a group structure on the set $\Dk$ that fits into a group extension (by two abelian groups)
    \[\begin{tikzcd}
    \faktor{\Z[\pi]}{\md(\pi_3M)} \arrow[tail,shift left]{rr}{\U+\fm(-)^G} &&
    \arrow[dashed,shift left]{ll}{\Dax\ \times \ e_\U/2}  \Dk \arrow[two heads]{r}{p_\U} &
    \faktor{\pi_2M}{\Z[\pi]\cdot G} 
    \end{tikzcd}
    \]
    In particular, $\U=\U+\fm(0)^G$ is the trivial group element and the dotted splitting is defined on the kernel of the epimorphism $p_\U(K)\coloneqq[-\U\cup K]$. 
    Up to isomorphism of extensions, this structure does not depend on $\U$. 
\end{theorem}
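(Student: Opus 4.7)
The group structure on $\Dk$ is an immediate consequence of the space-level identification developed in Section~\ref{sec:space-level}: the embedding space $\Emb(\D^2,M;k)$ is weakly equivalent to a based loop space $\Omega_\U E$, so $\Dk=\pi_0\Emb=\pi_1(E,\U)$ acquires a group structure in which $\U$ is the identity. To set up the extension I would define the geometric projection
\[
    p_\U(K)\coloneqq[-\U\cup_k K]\in\pi_2 M/\Z[\pi]\cdot G,
\]
which is well defined on isotopy classes (hence on $\pi_1 E$). That $p_\U$ is a group homomorphism is essentially built into the space-level picture of Section~\ref{sec:space-level}: the multiplication in $\pi_1 E$ corresponds to stacking two disks inside a boundary collar of $M$, and the class $[-\U\cup K]$ is additive under this stacking modulo spheres arising from $G$-tubings. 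Surjectivity of $p_\U$ follows from Proposition~\ref{prop-intro:4-term}: given any $[S]\in\pi_2 M$, I would form the generic immersed disk $\U\# S$ and clear its Wall invariant $\mu_2\in\Z[\pi\sm 1]/\langle\bar r-r\rangle$ by Norman tricks along $G$ to obtain an embedded representative with $p_\U$-value $[S]$.

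For the kernel, Lemma~\ref{lem:homotopy-class} gives $[-\U\cup(\U+\fm(r)^G)]=(\bar r-r)\cdot G\equiv 0$, so $\im(\U+\fm(-)^G)\subseteq\ker p_\U$. Conversely, given $K\in\ker p_\U$ I would realize $K=\U+\fm(r)^G$ by assembling $r\in\Z[\pi]$ from three pieces: a $\sigma$-antisymmetric component compensates the shift of homotopy class inside $\Z[\pi]\cdot G$; an integer multiple of $1\in\pi$ is absorbed into interior twists to correct the Euler number $e_\U(K)$; once both agree with $\U$, Theorem~\ref{thm-intro:Dax-classifies} reduces the problem to matching the relative Dax invariant by a further $\sigma$-symmetric finger-move pattern. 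The dotted splitting $\Dax\times e_\U/2$ then inverts this: the $\sigma$-symmetric component of $r$ modulo $\md(\pi_3M)$ is detected by $\Dax(\U,\,\bullet)$ via Theorem~\ref{thm-intro:Dax-values}(3), while $e_\U$ changes by $\pm 2$ per interior twist and is unaffected by finger moves, so $e_\U/2$ reads off the coefficient of $1\in\pi$. Injectivity of $\Z[\pi]/\md(\pi_3M)\hookrightarrow\Dk$ then reduces to the stabilizer computation of Theorem~\ref{thm-intro:Dax-values}(2).

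For independence from $\U$: any other $\U'\in\Dk$ gives $p_{\U'}(K)=[-\U'\cup K]$, which differs from $p_\U$ by the constant $[-\U'\cup \U]\in\pi_2 M$; on the kernel the Dax invariant and Euler number shift by the corresponding constants, assembling into a canonical isomorphism between the extensions for $\U$ and $\U'$.

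In my view, the conceptually hardest point is the compatibility of $p_\U$ with the abstract group law coming from the $\Omega_\U E$-description: one must match the space-level multiplication (which is loop concatenation in $E$) with the geometric ``stacking of disks in a boundary collar'' picture that makes $[-\U\cup K]$ visibly additive. Once that matching is in place, the remaining ingredients (Lemma~\ref{lem:homotopy-class}, Theorems~\ref{thm-intro:Dax-classifies}--\ref{thm-intro:Dax-values}, and Proposition~\ref{prop-intro:4-term}) combine to assemble the extension and its splitting.
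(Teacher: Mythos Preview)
Your plan has the right shape, but there is a genuine gap in how you build the splitting $\Dax\times e_\U/2$. The relative $\Dax$ invariant for \emph{neat} disks (Lemma~\ref{lem:Dax-invt}, Theorem~\ref{thm-intro:Dax-values}) is only defined when the two disks are homotopic \emph{rel boundary}; a general $K\in\ker p_\U$ is merely homotopic to $\U$ \emph{modulo} $\Z[\pi]\cdot G$. The paper's own proof flags precisely this: ``we only know that $K$ and $\U$ are homotopic modulo $G$, so we cannot use the Dax invariant for neat disks.'' Your workaround---first adjust the homotopy class with some $r_1$, then apply the neat-disk Dax---does not yield a well-defined map on $\ker p_\U$: the lift $r_1$ of the shift $s$ (with $[-\U\cup K]=s\cdot G$) is determined only modulo $\ker(\sigma-\Id)=\RGR^\sigma$, and two choices $r_1,\,r_1'=r_1+r_0$ produce disks $\U+\fm(r_1)^G$ and $\U+\fm(r_1')^G$ whose relative neat-disk Dax differs by $[r_0]$. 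You also omit the reason $s\in\im(\sigma-\Id)$ at all: this needs $\mu_2[K]=0$, as in the diagram chase of Proposition~\ref{prop:group-diagram}. A smaller point: $\fm(1)^G$ also moves the homotopy class (by $-G$, Lemma~\ref{lem:homotopy-class}), so your steps~1 and~2 are entangled, not independent.

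The paper's route avoids all of this by not working directly in $M$. It passes to half-disks in $M_G=M\cup_G h^3$, where $G$ bounds a $3$-ball and hence every $K\in\ker p_\U$ becomes genuinely homotopic to $\U$; the half-disk Dax of Section~\ref{sec:Dax-half}, which takes values in $\RGR/\md(\pi_3M)$ rather than $\RGR^\sigma/\md(\pi_3M)$, is the $\Dax$ appearing in the dotted arrow. Concretely, the extension is the lower row of Proposition~\ref{prop:group-diagram}, which is Proposition~\ref{prop:action=r} for $X=M_G$ (itself obtained from the arc-space extension of Proposition~\ref{prop:central}); the $\Z$-summand and $e_\U/2$ come from Proposition~\ref{prop:splitting}. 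In this framework your ``hardest point''---that $p_\U$ is a group homomorphism---is automatic, since $p_u$ is induced on $\pi_1$ by the inclusion $\Emb_\partial(\D^1,M_G)\hra\Map_\partial(\D^1,M_G)$.
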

In \cite[Prop.4.2]{Gabai-disks} an abelian group structure on $j^{-1}[\U]$ with unit $\U$ was introduced such that $\Dax(\U,\bull)\colon j^{-1}[\U]\sra \RGR^\sigma/\md(\pi_3M)$ is an epimorphism. Our Theorem~\ref{thm-intro:groups} not only implies that this is an isomorphism of groups, via Proposition~\ref{prop:group-diagram}, but also that the entire set $\Dk$ can be given a group structure -- that can be nonabelian, as we shall see in Proposition~\ref{prop:commutators}.
In Theorem~\ref{thm-intro:groups} we are using a more general $\Dax$ invariant, derived from that for half-disks in Section~\ref{sec:Dax-half}.

By \cite[Lem.5.14]{KT-highd} $e_\U$ is a homomorphism; its image is in $2\cdot\Z$ if and only if the universal cover $\wt M$ is spin. However, a spin$^c$ structure $W$ on $\wt M$, that always exists, will lead in Proposition~\ref{prop:splitting} to a homomorphism $\eta_{W,\U}\colon\Dk\to\Z$ that also takes the value 1 on $\U_{tw}^G$. In particular, the group $\Dk$ always splits off 
a factor of $\Z$.
\begin{remark}
    \label{rem:independence}
    % Given $(M,k)$ where $k\colon\S^1\hra \partial M$ has a dual sphere, the isomorphism class of our group $\Dk$ is independent of the choice of ``undisk'' $\U$. More precisely, there is a bijection of the set $\Dk$ that induces a group isomorphism and in particular, sends $\U$ to the other choice $\U'$. Moreover, fixing instead $(M,G)$ 
    If $G\colon\S^2\hra\partial M$ has dual circles $k$ and $k'$ that are null homotopic in $M$, one gets an isomorphism $\Dk\cong \D(M,k')$, a straightforward consequence of our construction. 
\end{remark}

\begin{example}\label{ex:boundary-conn-sum}
    For the boundary connected sum  $M=(\D^2 \times \S^2) \natural (\S^1 \times \D^3)$ with $\U\coloneqq \D^2 \times pt$ and $G\coloneqq 0 \times \S^2$, we have $\md=0$, see \cite[Thm.3.10]{Gabai-disks} or apply Theorem~\ref{thm:Hopf}. Since $M\simeq \S^2 \vee \S^1$ and $\pi_2M/ \langle G \rangle =0$, the extension in Theorem~\ref{thm-intro:groups} gives $\Z[t,t^{-1}] \cong \Dk$ and $\mu_2^{-1}(0)\cong\Z[t,t^{-1}]/ \langle t+t^{-1} \rangle$, for $t$ a generator of $\pi_1M$. Therefore, every homotopy class of embeddings contains infinitely many isotopy classes, which is a completely new phenomenon, very different from the spherical case. 
\end{example}

% It turns out that the extension in Theorem~\ref{thm-intro:groups} also comes from a group extension, with the group structure on $\mu_2^{-1}(0)$ arising from one on $[\D^2,M;k]$ that makes $\mu_2\colon[\D^2,M;k]\to\RGR/\langle\ol{r}-r\rangle$ into a homomorphism.
% However, this does not make the bijection $-\U\cup\bull\colon [\D^2,M;k]\to\pi_2M$ from \eqref{eq:cup-U} into a homomorphism to the standard $(\pi_2M,+)$. We postpone these results to an upcoming paper, but see Remark~\ref{rem:future-gp-str}.

%The lower extension in Theorem~\ref{thm-intro:groups} implies that $\Dk$ is 2-step solvable, an extension of two abelian groups.
 
We next determine all group commutators in $\Dk$. Denote the \emph{intersection form} of $M$ by $\lambda\colon\pi_2M\times\pi_2M\to\Z[\pi]$ and its coefficient at $1$ by $\lambda_1$, and write $\lambdabar\coloneqq\lambda-\lambda_1$,  the \emph{reduced intersection form} of $M$.
\begin{prop}\label{prop:commutators}
    The group commutator of $K_1,K_2\in\Dk$ is
    \[
[K_1, K_2]=\U+\fm(\lambdabar(-\U\cup K_1, -\U\cup K_2))^G.
    \]
    In particular, the extension in Theorem~\ref{thm-intro:groups} is central and the group $\Dk$ is 2-step nilpotent. Moreover, $\Dk$ is abelian if and only if we have $\lambdabar(a_1,a_2)\in \md(\pi_3M)$ for all $a_1,a_2\in\pi_2M$. 
\end{prop}
The centrality simply follows from the fact that $-\U\cup\bull$ vanishes on classes coming from the left, so the $\lambdabar$-term in the commutator vanishes as well. We use the fact that the intersection form $\lambda$ of $M$ factors through the quotient $\pi_2M/\Z[\pi]\cdot G$ because $G$ lies in the boundary of $M$.

In Example~\ref{ex:boundary-conn-sum} the group $\Dk$ is free abelian of infinite rank. By Proposition~\ref{prop:commutators}, to find a nonabelian example it suffices to find classes $a_1,a_2\in\pi_2M$ such that $\lambdabar(a_1,a_2)\notin \RGR^\sigma$, simply because $\md(\pi_3M)$ lies in this fixed set. For example, take $M=M_1 \# M_2$ for any simply connected 4-manifold $M_1$ that has nontrivial intersection form, and where $\pi_1M_2$ contains an element $g$ with $g\neq \ol{g}$. Then $\lambda_{M_1}(a_1,a_2)\in \Z\sm 0$ for some $a_i\in\pi_2M_1$, so
$\lambdabar_M(g\cdot a_1, a_2)= g \cdot \lambda_{M_1}(a_1,a_2) \neq \ol{g}\cdot \lambda_{M_1}(a_1,a_2)=\ol{\lambdabar_{M}(g\cdot a_1,a_2)}$.
\begin{remark}
    The commutator pairing of the extension in Theorem~\ref{thm-intro:groups} induces a skew-symmetric map $\pi_2M \times \pi_2M\to \Z[\pi]/\md(\pi_3M)$,
    but Proposition~\ref{prop:commutators} involves the hermitian pairing $\lambdabar$ which is \emph{not} skew-symmetric for most $4$-manifolds! Fortunately, the formula \eqref{eq:Whitehead} for Whitehead products, proven in Proposition~\ref{prop:Whitehead}, says that this becomes true modulo $\md(\pi_3M)$.  
\end{remark}

%%%%%%%%%%%%%%%%%
\textbf{Acknowledgments.} 
We thank Dave Gabai for sharing his insight into the Dax invariant of Example~\ref{ex:boundary-conn-sum} during his visit to Bonn. We also thank Hannah Schwartz and Rob Schneiderman for interesting discussions related to this work.
Both authors cordially thank the Max Planck Institute for Mathematics in Bonn. The first author was also supported by the Fondation Sciences Math\'ematiques de Paris.

\tableofcontents

%%%%%%%%%%%%%%%%%%%%%%%%%%%%%%%%%%%%%%%%%%%%%
\section{Further results and relations to other work} \label{sec:further results}
We first fix some notation. Recall that $M$ is a smooth, oriented 4-manifold with a fixed knot $k\colon\S^1\hra \partial M$ in the boundary. If there exists $G\colon\S^2\hra\partial M$ with $k\pitchfork G=pt$, and a neat embedding $\U\colon\D^2\hra M$ with boundary $\partial\U=k$, we say we are in the \emph{setting with a dual} and denote it by the triple $(M,\U,G)$. 

We let $M_G$ denote the manifold obtained from $M$ by attaching a 3-handle $h^3$ to $G$. This is well defined because $G$ is uniquely framed by our choices of orientations. There is a diffeomorphism
    \begin{equation}\label{eq:MG}
M_G\coloneqq M \cup_G h^3 = \big(M \sm \nu \U\big)\cup (h^2 \cup_G h^3)  \cong M \sm \nu \U
    \end{equation}
of $M_G$ with the complement of an open tubular neighborhood $\nu\U$ of $\U$ in $M$. Namely, $M$ is obtained from $M \sm \nu \U$ by attaching a $2$-handle $h^2$ with cocore $\U$. As $G$ is dual to $k=\partial \U$ and has trivial normal bundle, the standard handle cancellation of this $2$- and $3$-handle pair gives the diffeomorphism.

This implies that the diffeomorphism type of the 4-manifold $M\sm\nu\U\cong M\sm U$ is independent of the choice of $\U$, as long as $k$ is fixed. Hence, the main source of isotopy invariants for such $\U$, namely the diffeomorphism class of its complement, is completely useless in Theorem~\ref{thm-intro:Dax-classifies}!
Furthermore, by reversing the argument, we see that in the presence of any $\U$ with boundary $k$, the diffeomorphism type of the manifold $M_G$ is actually independent both of the choice of a dual $G$ for $k$ and the framing of $G$.

Recall the map $e_\U$ from~\eqref{eq:e-U}. Its kernel is a normal subgroup 
\begin{equation}\label{eq:Dk0}
    \Dk^0\coloneqq\ker(e_\U)\;\trianglelefteq\; \Dk.
\end{equation}
consisting of disks that induce the same framing on the boundary knot $k$ as the chosen undisk $\U$.

%%%%%%%%%%%%%%%%%
\subsection{Application to mapping class groups of 4-manifolds} \label{sec:mcg}
In the setting with a dual as above, the complement of $\nu\U$ does not carry any isotopy information, so it is natural to expect that different isotopy classes of $\U$ will lead to interesting diffeomorphisms of the 4-manifold $M$. This is indeed the case, as the following Theorem~\ref{thm:mcg} shows.

Let $\Diff_\partial(X)$ denote the topological group of diffeomorphisms of a manifold $X$ that are identity on $\partial X$. Extending a diffeomorphism by the identity over the 3-handle $h^3$ in $M_G$ is a homomorphism $i_G\colon\Diff_\partial(M)\hra\Diff_\partial(M_G)$, and similarly extending over $\nu\U$ gives $i_\U\colon\Diff_\partial(M\sm\nu\U)\hra\Diff_\partial(M)$. In Lemma~\ref{lem:i_Gi_U} we show that the composite $i_G\circ i_\U$ is a weak homotopy equivalence and in particular, $\pi_0i_G$ is a split surjection. 
\begin{theorem}\label{thm:mcg}
    In the setting $(M,\U,G)$ there is a split extension
\[\begin{tikzcd}
       \Dk^0 \arrow[tail]{r}{a_{\U}}
        & \pi_0 \Diff_\partial(M)  \arrow[two heads]{r}{\pi_0i_G}
        & \pi_0 \Diff_\partial(M_G)
\end{tikzcd}
\]
    of groups,
    where $a_\U$ is an ``ambient isotopy'' map that, roughly speaking, takes a neat disk $K$, considers $-\U\cup K$ as a loop of embedded arcs, extends this isotopy to an ambient isotopy and takes the endpoint $a_\U(K)$.
\end{theorem}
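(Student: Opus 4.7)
The plan is to realize $a_\U$ as the connecting map of a principal fibration and then to identify the resulting four-term exact sequence with the stated extension, using the space-level delooping of $\Dk^0$ from Section~\ref{sec:space-level} together with the weak equivalence from Lemma~\ref{lem:i_Gi_U}.

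First I would set up the restriction fibration
\[
\Diff_\partial(M) \xrightarrow{i_G} \Diff_\partial(M_G) \longrightarrow \Emb(\D^3, M_G;\, G)
\]
obtained by evaluating a diffeomorphism of $M_G$ on the core $3$-handle $h^3\subset M_G$, with base the space of neat framed embeddings $\D^3\hookrightarrow M_G$ extending $G=\partial h^3$. Isotopy extension makes this a fibration, and its fiber is precisely $\Diff_\partial(M)$ by the handle decomposition \eqref{eq:MG}. The relevant portion of the homotopy long exact sequence reads
\[
\pi_1\Emb(\D^3,M_G;\, G) \xrightarrow{\partial} \pi_0\Diff_\partial(M) \xrightarrow{\pi_0 i_G} \pi_0\Diff_\partial(M_G) \longrightarrow \pi_0\Emb(\D^3,M_G;\, G).
\]

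Next I would identify the base. The sphere $G=\partial h^3$ acquires a dual embedded arc in $M_G$ coming from $k$, since $k\pitchfork G=pt$ in $\partial M$. This is exactly the input needed by the space-level delooping of Section~\ref{sec:space-level}, which yields
\[
\pi_0\Emb(\D^3,M_G;\, G)=0 \qquad\text{and}\qquad \pi_1\Emb(\D^3,M_G;\, G)\cong \Dk^0,
\]
with $h^3$ corresponding to $\U$. The vanishing of $\pi_0$ gives surjectivity of $\pi_0 i_G$, and the connecting map $\partial$ becomes a map $\Dk^0\to\pi_0\Diff_\partial(M)$ whose image equals $\ker(\pi_0 i_G)$ by exactness. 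Concretely, $\partial$ sends a loop of $3$-balls to the endpoint of its ambient isotopy lift; unwinding the delooping identification, the loop corresponding to $K\in\Dk^0$ is exactly $-\U\cup K$ viewed as a loop of embedded arcs in $M$, so $\partial$ reproduces the geometric definition of $a_\U$.

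For the splitting and injectivity, Lemma~\ref{lem:i_Gi_U} asserts that $i_G\circ i_\U$ is a weak equivalence. Inverting it on $\pi_0$ and post-composing with $\pi_0 i_\U$ provides a right inverse to $\pi_0 i_G$, while on $\pi_1$ the same equivalence implies that $\pi_1 i_G$ is surjective; by exactness of the fibration sequence at $\pi_1\Diff_\partial(M_G)$ this forces the next map $\pi_1\Diff_\partial(M_G)\to\pi_1\Emb(\D^3,M_G;\, G)$ to vanish, so $a_\U=\partial$ is injective. The main obstacle is the identification $\pi_1\Emb(\D^3,M_G;\, G)\cong \Dk^0$ and the matching of the abstract connecting map $\partial$ with the concrete ambient isotopy construction of $a_\U$. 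This requires careful bookkeeping of framings so that $\Dk^0$ (rather than all of $\Dk$) appears, and relies on the full force of the space-level methods recalled earlier in the paper.
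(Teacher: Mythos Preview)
Your overall strategy---set up an isotopy-extension fibration with fiber $\Diff_\partial(M)$ and total space $\Diff_\partial(M_G)$, identify the connecting map with $a_\U$, and use Lemma~\ref{lem:i_Gi_U} for the splitting and injectivity---is exactly the paper's strategy, and your argument for injectivity and the splitting from Lemma~\ref{lem:i_Gi_U} is correct. The problem is the particular fibration you wrote down and the identification of its base.

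The space $\Emb(\D^3,M_G;G)$ as you describe it does not make sense: once the $3$-handle is attached, the sphere $G=\S^2\times\{0\}$ lies in the \emph{interior} of $M_G$, not in $\partial M_G$, so the core $\D^3\times\{0\}$ of $h^3$ is not neat and elements of $\Diff_\partial(M_G)$ do not fix $G$. Consequently your evaluation map does not land in embeddings ``extending $G$'', and the fiber over the core alone is not $\Diff_\partial(M)$ (fixing a $3$-disk in the interior does not fix its tubular neighborhood, hence does not fix $h^3$). Section~\ref{sec:space-level} also does not give the identification you invoke: Theorem~\ref{thm:space-LBT} is about $2$-disks in $M$ with a dual $2$-sphere in $\partial M$, not about $3$-balls in $M_G$ with a ``dual arc'', so neither $\pi_0\Emb(\D^3,M_G;G)=0$ nor $\pi_1\Emb(\D^3,M_G;G)\cong\Dk^0$ is available.

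What the paper actually does is evaluate on the \emph{dual $1$-handle} $h^1$ (the cocore direction of $h^3$), i.e.\ on the thickened neat arc $\D^1\times\D^3\subset M_G$ whose attaching region $\partial\D^1\times\D^3$ does lie in $\partial M_G$. This gives the fibration $\Diff_\partial(M_G)\to\Emb_{\partial\D^1\times\D^3}(\D^1\times\D^3,M_G)$ with fiber $\Diff_\partial(M)$, and the base now maps by $\ev_{\D^1\times[0,\e]}$ to $\Emb^\e_\partial(\D^1,M_G)$, whose $\pi_1$ is $\Dk$ by Theorem~\ref{thm:space-LBT}. The crucial framing step---where $\Dk^0$ rather than $\Dk$ appears---comes from the fiber $\Omega\S^1$ of $\ev_{\D^1\times[0,\e]}$: its connecting map $\delta$ to $\pi_0\Omega\S^1\cong\Z$ is identified with $e_\U$, so the image of $\pi_1\Emb_{\partial\D^1\times\D^3}(\D^1\times\D^3,M_G)$ is exactly $\ker(e_\U)=\Dk^0$. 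This framing analysis is the substantive content you gestured at in your last paragraph but did not carry out; without it the appearance of $\Dk^0$ is unexplained.
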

A point-theoretic splitting of $a_{\U}$ is induced by sending a diffeomorphism $\varphi$ to the disk $\varphi(\U)$. 
In future work we will show how Theorem~\ref{thm:mcg} generalizes results of Wall on diffeomorphisms of stabilized 4-manifolds, show that the action of $\pi_0 \Diff_\partial(M_G)$ is the natural one on the normal subgroup and conclude that $\varphi\mapsto \varphi(\U)$ is rarely a group homomorphism, i.e.\ this semi-direct product is rarely a direct product of groups.
\begin{remark}\label{rem:diffeos} 
    It follows that for $K\in\Dk^0$ there is a diffeomorphism of $M$ rel.\ boundary, that takes $K$ to $\U$. 
    One can see this directly using that $M\sm \nu K\cong M_G\cong M\sm\nu\U$ as in \eqref{eq:MG} extends across $\nu K$ if the framings agree. Compare \cite[Lem.2.3]{Schwartz},
    \cite[Lem.6.1]{ST-LBT}, \cite[Lem.5.3]{Gabai-disks}.
\end{remark}

%%%%%%%%%%%%%%%%%%%%%%%%%%%%%%%%%%
\subsection{Deriving the light bulb trick for spheres} \label{sec:spheres}
For a framed embedding $G\colon\S^2\hra N$ in the interior of a smooth oriented 4-manifold $N$, the set $\SG$ of isotopy classes of embeddings $F\colon\S^2\hra N$ that are dual to $G$, was studied previously in \cite{Gabai-spheres, ST-LBT}.
Gabai's LBT for spheres needed the assumption that the fundamental group of the ambient 4-manifold $M$ has no elements of order~2. Together with Rob Schneiderman \cite{ST-LBT}, the second author found a completely general isotopy classification of spheres with common framed dual, by introducing a ``Freedman--Quinn invariant'' $\FQ$ in addition to the homotopy class of the sphere. 

That proof shows, roughly speaking, that the Whitney moves in a homotopy can be chosen as inverses to the finger moves, up to isotopy of the two 2-spheres and assuming control over $\FQ$ (that vanishes if there are no elements of order 2 in $\pi_1M$). Remarkably, the Freedman--Quinn invariant can be thought of as the Dax invariant for spheres, despite their very different origins, see Proposition~\ref{prop:disks-spheres}.
These two previous proofs have not been generalized from spheres to disks, but see \cite{Schwartz-disks} for partial results in this direction. 

These isotopy classifications for spheres follow from our setting of disks as follows.
 Denote by $M\coloneqq N\sm\nu G\subseteq N$ the complement of an open tubular neighborhood of $G$, and by $m_G\colon\D^2\hra N$ a meridian disk of $G$. Then the boundary $\partial M$ contains the meridian circle $k\coloneqq\partial m_G$ and also a dual sphere for $k$, a parallel push-off of $G$. Thus, we are in a setting $(M,\U,G)$ as in Proposition~\ref{prop-intro:4-term} and we can classify the set $\Dk$ of disks in $M$ with boundary $k$ via an exact sequence. We next use this to describe $\SG$ itself.

Forgetting that $F$ is an embedding gives the map $j$ to the set $[\S^2,N]^G$ of based homotopy classes of maps $S\colon\S^2\to N$ with $\lambda(S,G)=1$. Note that neither $\SG$ nor $[\S^2,N]^G$ comes with a group structure, but \cite{ST-LBT} defines an action $\fm(\bull)^G$ on $\SG$ by a quotient of the $\mathbb{F}_2$-vector space generated by 2-torsion $T_N$ in $\pi_1N$.
\begin{prop}\label{prop:disks-spheres}
    There is an isomorphism of 4-term exact sequences
    \[\begin{tikzcd}[column sep=0.78cm]
        \faktor{\Z[\pi\sm 1]^\sigma}{\md(\pi_3M)}\arrow{d}[swap]{\cong} \arrow[tail,squiggly,shift left]{r}{+\fm^G} 
        & \Dk \arrow[dashed,shift left]{l}{\Dax} \arrow{d}{\bull\cup m_G}[swap]{\cong} \arrow{r}{j} 
        & \mbox{}[\D^2,M;k] \arrow{d}{\bull\cup m_G}[swap]{\cong}\arrow[two heads]{r}{\mu_2} 
        & \faktor{\RGR}{\langle \ol{r}{-}r\rangle}\arrow{d}[swap]{=}\\
        \faktor{\mathbb{F}_2[T_N]}{\mu_3(\pi_3N)} \arrow[tail,squiggly,shift left]{r}{+\fm^G} 
        & \SG \arrow[dashed,shift left]{l}{\FQ}  \arrow{r}{j} 
        & \mbox{}[\S^2,N]^G \arrow[two heads]{r}{\mu_2} 
        & \faktor{\Z[\pi_1N\sm 1]}{\langle\ol{r}{-}r\rangle}
    \end{tikzcd}
    \]
    The relative Dax respectively Freedman--Quinn invariants $\Dax$ and $\FQ$ invert the actions as in Theorem~\ref{thm-intro:Dax-values}~
    (3) and they make the left square commute. 
\end{prop}
The relative Freedman--Quinn invariant $\FQ$ was defined and shown to invert the action in \cite{ST-LBT}, together with the exactness of the lower sequence. We will show in Section~\ref{sec:spheres-proofs} how those results follow from our upper sequence. The crucial step was sketched in Example~\ref{ex:interior-conn-sum}, which applies to the manifold $M=N\sm\nu G$ since $\partial M\cong\S^1 \times \S^2\sqcup\partial N$ and $\S^1\times pt$ bounds in $M$, implying that $M\cong(\D^2\times\S^2) \# M'$. Thus, $g+\ol{g}$ is contained in $\md(\pi_3M)$ for all $g$, which gives rise to the leftmost isomorphism since $\Z[\pi]^\sigma/\langle1,g+\ol{g}\rangle\cong\mathbb{F}_2[T_N]$.

\begin{remark}\label{rem:deriving-disks-from-spheres}
    Note that a triple $(M,\U,G)$ is of such a connected sum form if and only if $G$ lies in a connected component of $\partial M$ that is diffeomorphic to $\S^1 \times \S^2$. In this situation, one can forget the rest of the boundary $\partial M$ and prove our result for disks with the methods of \cite{ST-LBT} by first filling in $\D^2 \times \S^2$, i.e.\ turning $\U$ into a sphere in the interior, and again using Example~\ref{ex:interior-conn-sum} to see that the classifying invariants, Dax and Freedman--Quinn, agree.
\end{remark}

In Section~\ref{subsec:Dax-FQ} we will see that the $\Dax$ invariant always determines the Freedman--Quinn invariant for disks, and note that if the component of $\partial M$ containing $G$ is not diffeomorphic to $\S^1 \times \S^2$, then $\Dax$ is a strictly stronger invariant in general, taking values in a larger abelian group (torsion higher than just 2-torsion and also torsion-free parts arise, see Lemma~\ref{lem:M-c}). 

For example, $M=(\D^2\times\S^2)\#(\S^1 \times \D^3)$ is a non-simply connected example for which $\RGR^\sigma/\md(\pi_3M)$ is trivial, so in $M$ there is at most one isotopy class of embeddings in a given homotopy class. In contrast, we saw in Example~\ref{ex:boundary-conn-sum} that the \emph{boundary} connected sum $(\D^2 \times \S^2)\natural(\S^1 \times \D^3)$ has an interesting $\Dax$ invariant; this was utilized in \cite[Thm.0.8]{Gabai-disks} to construct in this manifold a 3-ball homotopic but not isotopic to $pt \times \D^3$.
\begin{remark}\label{rem:based}
    In both \cite{Gabai-spheres} and \cite{ST-LBT} it is only assumed that the given spheres with dual $G$ are homotopic, not \emph{based} homotopic. However, in both cases it is argued at the start of the proofs why such a homotopy can also be found in the based setting, see \cite[Lem.2.1]{ST-LBT} for an elementary argument.
\end{remark}

%%%%%%%%%%%%%%%%%
\subsection{Key inputs from  spaces of embeddings} \label{sec:space-level}
Crucial ingredients for the above results come from our previous work \cite{KT-highd}, concerning neat embeddings of $k$-disks into $d$-manifolds, with a dual sphere for the boundary condition. The following is the case $k=2,d=4$ of {\cite[Thm.B]{KT-highd}}.
\begin{theorem}
\label{thm:space-LBT}
    In the setting $(M,k,G)$, any choice of $\U\in\Emb(\D^2,M;k)$ induces homotopy equivalences
    \[\Emb(\D^2,M;k) \simeq \Emb(\HD,M_G;k^\e)\simeq \Omega\Emb^\e(\D^1,M_G;k_0^\e).
    \]
\end{theorem}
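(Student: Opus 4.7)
Since this is the specialization $(k,d)=(2,4)$ of Theorem~B in~\cite{KT-highd}, my plan is to invoke that general result, but it is worth sketching the geometric content of each of the two equivalences, in case the reader has not seen the general theorem.

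For the first equivalence $\Emb(\D^2,M;k)\simeq \Emb(\HD,M_G;k^\e)$, I would build on the handle-cancellation diffeomorphism $M_G\cong M\sm\nu\U$ recalled in~\eqref{eq:MG}. Given $K\in\Emb(\D^2,M;k)$, a parametrized isotopy-extension argument near the common boundary $\partial K=\partial\U=k$ lets one deform $K$ so that it agrees with $\U$ on a chosen collar of $k$. Cutting this collar off realizes the remainder as a neat embedding of the half-disk $\HD$ into $M_G$, whose ``straight'' edge lies on the arc $k^\e\subseteq\partial M_G$ (the portion of $k$ surviving the removal of $\nu\U$) and whose ``curved'' edge lies on the belt $2$-sphere of the 3-handle $h^3$. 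The map the other way glues back a standard collar. To promote these constructions to a homotopy equivalence, I would exhibit the collar-restriction map as a fibration with contractible base (by a parametrized tubular-neighborhood argument using the framing of $\U$ provided by $G$) and identify its fiber with $\Emb(\HD,M_G;k^\e)$.

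For the second equivalence $\Emb(\HD,M_G;k^\e)\simeq \Omega\Emb^\e(\D^1,M_G;k_0^\e)$, the plan is \emph{scanning}: write $\HD=\D^1_s\times\D^1_t$ with $\D^1_s\times\{0\}\mapsto k^\e$ as the fixed boundary condition. Slicing a half-disk at height $t$ exhibits it as a smooth path of neat arcs $\D^1\hookrightarrow M_G$, based at the trivial slice $k_0^\e$ at $t=0,1$, hence as a based loop in $\Emb^\e(\D^1,M_G;k_0^\e)$. The resulting scanning map is evidently continuous, and one verifies it is a weak equivalence by specifying a quasi-inverse that reconstructs a half-disk from such a loop via family-level concatenation and the parametrized isotopy extension theorem.

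The main obstacle is that in codimension two, scanning maps on embedding spaces are \emph{not} in general homotopy equivalences, and this is exactly what makes Theorem~B of~\cite{KT-highd} nontrivial. The dual sphere $G\subseteq\partial M$ is what rescues the argument: it provides a geometric mechanism (Norman tricks along parallel copies of $G$, in the same spirit as the action in Theorem~\ref{thm-intro:Dax-values}(1)) to remove the unwanted intersections that arise when interpolating between generic slices of a half-disk. The remaining check -- that the two constructions are quasi-inverses on the level of spaces -- reduces after parametrized handle-cancellation to a standard statement about parametrized tubular neighborhoods, so the real work is exactly the dual-sphere-enabled scanning equivalence.
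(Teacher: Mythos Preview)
Your invocation of \cite[Thm.~B]{KT-highd} is exactly what the paper does, so at the level of the formal proof you are fine. But your sketch of the two equivalences diverges from the actual argument in \cite{KT-highd}, and the second one contains a genuine gap.

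For the first equivalence, the paper's argument is much simpler than yours and does not use the diffeomorphism $M_G\cong M\sm\nu\U$. One simply uses the inclusion $M\subset M_G$: the arc $u\coloneqq k\cap\nu G$ becomes a \emph{neat} arc in $M_G$ once the 3-handle is attached along $G$, while $k_-\coloneqq k\cap(\partial M\sm\nu G)$ remains in $\partial M_G$. Thus a neat disk in $M$ with boundary $k$ is \emph{literally} a half-disk in $M_G$ with boundary $k_-\cup u$; no isotopy extension or collar-cutting is required. (Incidentally, a 3-handle in a 4-manifold has belt \emph{0-sphere}, not a 2-sphere, so your description of where the curved edge lands cannot be right as written.)

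For the second equivalence, the paper does \emph{not} use scanning. It uses a Cerf-type trick: the larger space $\Emb(\HD,M_G;k_-)$ of half-disks with only the boundary arc $k_-$ fixed (the neat arc $u$ free) is contractible, and restriction to the free boundary arc is a fibration to $\Emb^\e(\D^1,M_G;k_0^\e)$ with fiber $\Emb(\HD,M_G;k^\e)$ over $u^\e$. Contractibility of the total space makes the connecting map from the loop space of the base to the fiber a weak equivalence. This completely bypasses the codimension-two scanning obstruction you correctly flag. Your proposed repair via Norman tricks is not convincing at the space level: Norman tricks remove intersections one at a time, and you give no indication of how to assemble them into a continuous homotopy inverse to the scanning map. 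Moreover, the dual sphere $G$ plays no role in the delooping step---it has already been consumed in forming $M_G$ and in making $u$ neat. The role of $\U$ is only to supply the basepoint $u^\e$ for the loop space.
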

The space $\Emb^\e(\D^1,M_G;k_0^\e)$ of ``$\e$-augmented neat arcs'' appearing on the right consists of embeddings $\D^1 \times [0,\e]\hra M_G$ with a fixed boundary condition  $k_0^\e\colon ([-1,-1+\e]\sqcup [1-\e,1]) \times [0,\e]\hra \partial M_G$. Up to homotopy equivalence, this corresponds to a neat arc with a choice of normal vector field, see \eqref{eq:eta-W-pr} or \cite[Prop.5.1]{KT-highd}.
We pick an $\e$-augmented arc $u^\e$ as a basepoint and write $\Omega\Emb^\e(\D^1,M_G;k_0^\e)$ for the space of loops based at it.

The proof in \cite{KT-highd} that this loop space and $\Emb(\D^2,M;k)$ are homotopy equivalent uses an intermediate space $\Emb(\HD,M_G;k^\e)$ of \emph{half-disks} in~$M_G$. These are embeddings of disks whose boundary $k=k_-\cup u$ consists of an arc $k_-\colon\D^1\hra \partial M_G$ and a neat arc $u\colon\D^1\hra M_G$, so that $\partial k_-=k_0=\partial u$. Such a $k$ in $M_G$ arises precisely as $k\colon\S^1\hra \partial M \subset M\subset M_G\coloneqq M \cup_{\nu G} h^3$, since the arc $u\coloneqq k\cap\nu G$ is neat in $M_G$, whereas $k_-\coloneqq k\cap(\partial M\sm\nu G)$ remains in the boundary after the handle $h^3$ is attached. As a consequence, a disk in $M$ becomes precisely a half-disk in $M_G$. Moreover, we require that half-disks in $\Emb(\HD,M_G;k^\e)$ all agree actually on a collar $k^\e$ of their boundary.

To prove that $\Emb(\HD,M_G;k^\e)$ deloops to the space of $\e$-augmented arcs, we use a trick going back to Cerf (in the case of diffeomorphisms of $\D^d$): the larger space $\Emb(\HD,M_G;k_-)$ is contractible, and the restriction to the free boundary arc is a fibration, so a connecting map is a desired equivalence. 

Taking connected components of the spaces in Theorem~\ref{thm:space-LBT} gives bijections
\begin{equation}\label{eq:bijections}
    \Dk \longleftrightarrow \HD(M_G;k^\e) \longleftrightarrow\pi_1(\Emb^\e(\D^1,M_G;k_0),u^\e).
\end{equation}
between the sets of isotopy classes of neat disks in $M$ and half-disks in $M_G$, and the fundamental group of the space of $\e$-augmented arcs in $M_G$.
This provides the explanation for the group structure on $\Dk$ from Section~\ref{sec-intro:groups}: it comes by definition from the concatenation of loops of arcs.

In \cite[Sec.5]{KT-highd} we next studied the map forgetting the augmentation $\ev^\e\colon\Emb^\e(\D^1,M_G;k_0^\e) \sra \Emb(\D^1,M_G;k_0)$.
This is a fibration, whose fiber $\Omega\S^2$ measures the normal derivative along an arc. We showed that $\pi_1\ev^\e$ is onto with kernel $\Z$, and that this $\Z$ can be split back using a splitting $\eta_W$ (which depends on the choice of a spin$^c$ structure $W$ on the universal cover of $M_G$). Thus, there is a group isomorphism:
\begin{equation}\label{eq:eta-W-pr}
    \eta_W\times\ev^\e\colon
    \pi_1(\Emb^\e(\D^1,M_G;k_0),u^\e)\xrightarrow{\cong} 
    \Z \times \pi_1(\Emb(\D^1,M_G;k_0),u).
\end{equation}
The fundamental group of the space of arcs in a 4-manifold falls into the so-called metastable range, so it can be computed using the work of Dax~\cite{Dax} (following Haefliger), or at the second stage of the Goodwillie--Weiss embedding calculus, see \cite{GKW}. We make Dax's result explicit and computable, and for this case obtain the following, see~\cite[Thm.4.13, Thm.4.20]{KT-highd}.
\begin{theorem}\label{thm:Dax-arcs}
    Fix an oriented smooth $4$-manifold $X$ and $k_0\colon\S^0\hra\partial X$. Then there is an inverse pair of isomorphisms
    \[\begin{tikzcd}
        \Da\colon \pi_2\big(\Imm(\D^1,X;k_0),\Emb(\D^1,X;k_0),u\big)\rar[shift left] & \Z[\pi_1X]\colon\lar[shift left] \realmap
    \end{tikzcd}
    \]
\end{theorem}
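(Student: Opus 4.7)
The plan is to define $\Da$ by a signed count of double points in a generic $2$-parameter family of immersed arcs, and to define $\realmap$ by an explicit geometric construction for each group element $g\in\pi_1 X$. The key dimensional observation is that for arcs $\D^1\hra X^4$, transverse self-intersections in a $2$-parameter family impose $4$ equations on the $4$-dimensional space of (base, source, source) data, so double points occur generically at isolated interior points of $\D^2$. This places us in Dax's metastable range, where the relative homotopy group of the pair is fully detected by such singularities.

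To construct $\Da$, I would take a representative $F\colon(\D^2,\S^1,\ast)\to(\Imm,\Emb,u)$ and perturb it to be transverse to the double-point stratum. At each isolated $s\in\D^2$ with $F(s)(t_1)=F(s)(t_2)$ for distinct $t_1,t_2\in\D^1$, record $\pm g\in\pi_1 X$, where $g$ is the double-point loop obtained by concatenating the two arc segments from the $t_i$ to the basepoints in $k_0$, with sign determined by comparing the orientation of $\D^2$ with that of the evaluation map along the double-point locus. Set $\Da(F)$ to be the sum of these contributions. Since $F(\S^1)\subseteq\Emb$, no double points escape through the boundary, so the count is finite. Invariance under the choice of generic representative follows by standard transversality applied to a $3$-parameter homotopy between two such choices: the double-point locus then becomes a compact oriented $1$-manifold cobordism in the interior, equating the signed counts at the two ends. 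Additivity under vertical stacking of families makes $\Da$ a homomorphism.

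For $\realmap$, given $g\in\pi_1 X$, I would choose a small loop $\gamma$ in $X$ based at a point of $k_0$ representing $g$ and consider the loop of embeddings $\S^1\to\Emb$ obtained by sliding $u$ once around $\gamma$ by a $1$-parameter family of ambient isotopies supported near $\gamma$. This is null-homotopic in $\Imm$ by shrinking $\gamma$ to a point: as the loop contracts, the arc passes through itself once transversely, producing a disk $F_g\colon(\D^2,\S^1)\to(\Imm,\Emb)$ with a single double point whose double-point loop is exactly $g$. Set $\realmap(g)\coloneqq[F_g]\in\pi_2(\Imm,\Emb,u)$. A direct inspection shows $\Da(\realmap(g))=g$, so by $\Z$-linearity one obtains $\Da\circ\realmap=\mathrm{id}$, which in particular proves surjectivity of $\Da$.

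The hard part will be the other composite $\realmap\circ\Da=\mathrm{id}$, i.e.\ the injectivity of $\Da$. Given a generic $F$ with $\Da(F)=0$, one must produce a homotopy of $F$ in $(\Imm,\Emb)$ to a trivial family. I would pair up the double points of $F$ according to their cancelling contributions in $\Z[\pi_1 X]$, construct disjoint parametrized Whitney disks in $X^4$ connecting each cancelling pair, and use them to guide a $3$-parameter homotopy of $F$ that removes the pairs of double points without introducing new ones. The metastable dimension count $2\cdot 1+1<4$ is precisely what ensures that such Whitney disks can be chosen embedded and disjointly, and that no secondary singularities obstruct the parametrized cancellation. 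This parametrized Whitney trick is the essential technical input taken from Dax's original analysis~\cite{Dax}, which is equivalent to the second stage of the Goodwillie--Weiss embedding tower as in~\cite{GKW}.
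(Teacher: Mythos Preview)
The paper does not prove Theorem~\ref{thm:Dax-arcs}; it is quoted as a black box from \cite[Thm.4.13, Thm.4.20]{KT-highd} (see the sentence immediately following the theorem and the phrase ``This is our second black box'' in Section~\ref{sec:space-level}). So there is no in-paper proof to compare against, only the definitions of $\Da$ (Definition~\ref{def:Dax}) and a pointer to the realization map. Your outline is in the spirit of Dax's original argument and of \cite{KT-highd}: define $\Da$ by a signed double-point count, build $\realmap$ by an explicit one-double-point family, and prove injectivity by cancelling double points in pairs.

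There is, however, a genuine gap in your injectivity sketch. You write that ``the metastable dimension count $2\cdot 1+1<4$'' ensures that Whitney disks ``in $X^4$'' can be chosen embedded and disjoint. This is not the correct setting. The double points you are counting are double points of the track $\wt F\colon \I^2\times\D^1\to \I^2\times X$, a generic immersion of a $3$-manifold into a $6$-manifold (cf.\ Definition~\ref{def:Dax} and Lemma~\ref{lem:Dax-mu}). The relevant Whitney disks therefore live in the $6$-manifold $\I^2\times X$, and the inequalities that make the Whitney trick go through are $2\cdot 2<6$ (Whitney disks are generically embedded) and $2+3<6$ (their interiors miss the track). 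In $X^4$ itself a $2$-disk satisfies $2\cdot 2=4$, so it is \emph{not} generically embedded; this is exactly the failure of the Whitney trick in dimension~$4$ that motivates the whole subject. Your inequality $2\cdot 1+1<4$ does not correspond to any step of the argument.

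A second, more subtle point: even after performing Whitney moves on $\wt F$ inside $\I^2\times X$, the resulting embedding need not be of the form $(\vec t,\theta)\mapsto(\vec t,F'(\vec t)(\theta))$ for some family $F'$ of immersed arcs, because a generic Whitney move will mix the $\I^2$- and $X$-coordinates. To conclude that $[F]=0$ in $\pi_2(\Imm,\Emb,u)$ you must either arrange the Whitney moves to respect the product structure, or argue separately (as in \cite{Dax,KT-highd}) that the resulting embedded $3$-disk can be isotoped back to track form rel.\ boundary. This is where the actual work in the metastable range lies, and your sketch does not address it.
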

% given by a clever count of self-intersections of arcs in a 2-parameter family.
This is our second black box, which we use in Section~\ref{sec:arcs} to describe the group $\pi_1(\Emb(\D^1,M_G;k_0),u)$ as an extension of $\pi_2(M_G)$ by a certain quotient of $\Z[\pi_1M_G\sm1]$, and study its commutator pairing. We use~\eqref{eq:eta-W-pr} to extend this to the augmented version in Proposition~\ref{prop:splitting}.  In Section~\ref{sec:main-proofs} this will give rise to our main results, using \eqref{eq:bijections} and our action $+\fm^G$, described in details in Section~\ref{sec:actions}. The main step, Proposition~\ref{prop:Dax-inverts}, says that the relative $\Dax$ invariant inverts this action. 
% In Lemma~\ref{lem:Dax=Da} we will identify two a priori distinct Dax invariants, one defined in the setting of arcs (Section~\ref{sec:arcs}), denoted by $\Da$, the other for disks or half-disks (Section~\ref{sec:disks}) called $\Dax$.

%%%%%%%%%%%%%%%%%%%%%%%%%%%%%%%%%%
\subsubsection*{Notation} 
Throughout the paper we assume $X,N$ are smooth, oriented, connected 4-manifolds, with $\partial X\neq\emptyset$, and $N$ has no condition on the boundary but comes with a fixed framed $G\colon\S^2\hra N$. See~\nameref{sec:notation}.

%%%%%%%%%%%%%%%%%%%%%%%%%%%%%%%%%%%%%%%%%%%%%
\section{Spaces of embedded and immersed arcs}\label{sec:arcs}
We review some notions from \cite{KT-highd} in the special case of arcs in 4-manifolds and prove a number of results that are characteristic for this case. For our 4-manifold $X$ with $\pi\coloneqq\pi_1(X,x_-)$ and $u\colon\D^1\hra X$, we consider the spaces $\Emb_\partial(\D^1,X)\coloneqq\Emb(\D^1,X;k_0)$ and $\Imm_\partial(\D^1,X)$ of neat embedded and immersed arcs, with boundary $k_0=\partial u=\{x_-,x_+\}$ and based at $u$.
% In the above table, the arc $u$ will play the role of the free boundary $k_+$ when we come to half-disks in $X$. 

% The example relevant for us arises from the data $(M,k,G)$ of Theorem~\ref{thm-intro:Dax-classifies} as follows. The circle $k\colon\S^1\to\partial M$ is the union of two arcs, one contained in the tubular neighborhood $\nu G$ of the dual, the other away from it. If we let $X\coloneqq M_G$, obtained from $M$ by attaching a 3-handle to $G$, then the second arc is still in the boundary but the first would become a neat arc in $X$ that we can choose as our basepoint $u$. As a consequence, a disk in $M$ with boundary $k$ provides a homotopy of $u$ to an arc in $\partial X$. 

%%%%%%%%%%%%%%%%%%%%%%%%%%%%%%%%%%
\subsection{The Dax invariant for spaces of arcs}\label{sec:Dax-arcs}
We now recall the definition of the isomorphism $\Da$ appearing in Theorem~\ref{thm:Dax-arcs}, since we need to identify this invariant for arcs with the relative $\Dax$ invariant for disks used in our main results. The domain of $\Da$ depends on the basepoint $u$, but we will see that this dependence disappears if $u$ is homotopic into the boundary, as in our case. For an explicit inverse of $\Da$, the \emph{realization map} $\realmap$, see \cite[Sec.4.1.4]{KT-highd}.
% There is an isomorphism of abelian groups
% \begin{equation}\label{eq:Dax-iso}
%     \Da\colon\pi_2(\Imm_\partial(\D^1,X),\Emb_\partial(\D^1,X),u){\ra} \Z[\pi]
% \end{equation}
We point out that \cite{Gabai-disks} proves statements equivalent to this isomorphism using the ``spinning map'' and the same definition of $\Da$.

To define $\Da$ for arcs, denote $\I\coloneqq[0,1]$ and represent a given relative homotopy class by a map $F\colon \I^2\to \Imm_\partial(\D^1,X)$ satisfying
\[
    F( \partial\I \times \I \cup  \I \times 1 ) =u
    \quad\text{ and }\quad  F( \I \times 0 )\subseteq\Emb_\partial(\D^1,X).
\]
After a small perturbation of $F$ preserving boundary conditions the map
\[
    \wt{F}\colon\;\I^2 \times \D^1 \ra \I^2 \times X,\quad (\vec{t},\theta)
    \mapsto (\,\vec{t},\, F(\vec{t}\ )(\theta)\,)
\]
is an immersion whose only singularities are isolated transverse double points in the interior of $\I^2 \times \D^1$.
By definition, double point values have the form $(\vec{t}_i, y_i)$ for $\vec{t}_i\in\I^2$ and $y_i\coloneqq F(\vec{t}_i)(\theta^-_i)=F(\vec{t}_i)(\theta^+_i)\in X$ with $\theta^-_i<\theta^+_i\in\D^1$.
\begin{defn}\label{def:Dax} 
  Let $\Da[F]\coloneqq\sum_{i=1}^k\e_{(\vec{t}_i, y_i)}g_{y_i}\in\Z[\pi]$ with signs and  group elements as follows.
 Let $\e_{(\vec{t}_i, y_i)}\in \{\pm 1\}$ be the relative orientation at $(\vec{t}_i, y_i)$, obtained by comparing orientations of the tangent space $T_{(\vec{t}_i, y_i)}(\I^2 \times X)$ and:
\begin{equation}\label{eq:Dax-sign}
        d\wt{F} \big( T_{(\vec{t}_i,\theta_i^-)}( \I^2 \times \D^1 ) \big)
        \oplus d\wt{F} \big( T_{(\vec{t}_i,\theta_i^+)}(\I^2 \times \D^1) \big).
\end{equation}
    Define the group element $g_{y_i}\in\pi_1(X,x_-)$ to be represented by the following loop based at $u(-1)=x_-$ (see Figure~\ref{fig:double-point}):
\begin{equation}\label{eq:Dax-dp-loop}
      F(\vec{t}_i)|_{[-1,\theta_i^-]}\cdot F(\vec{t}_i)|^{-1}_{[-1,\theta_i^+]}.
\end{equation}
\end{defn}
Note how the order $\theta^-_i<\theta^+_i$ along the arc is crucial for defining these loops. In contrast, when computing associated loop for a self-intersection of an immersion $\D^3\imra Y^6$, one has the indeterminacy $g_y = (-1)g_y^{-1}$ coming from the choice of the order of sheets at $y$ in the 6-manifold $Y$, see Section~\ref{subsec:Dax-FQ}.
\begin{figure}[!htbp]
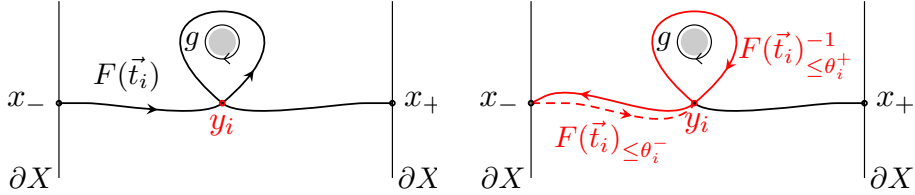

    \centering
    \includestandalone[mode=buildmissing,width=0.9\linewidth]{Figures-4dLBT/fig-double-pt-loop-thetas}
    \caption{The double point $y_i\in X$ of the arc $F(\vec{t}_i)\in\Imm_\partial(\D^1,X)$ has the associated loop $g_{y_i}=g^{-1}$.}
    \label{fig:double-point}
\end{figure}

In \cite{KT-highd} we then consider $(\Imm,\Emb)\coloneqq(\Imm_\partial(\D^1,X),\Emb_\partial(\D^1,X))$ with basepoint $u$, and the associated exact sequence on homotopy groups:
\begin{equation}\label{eq:pair-seq}
    \begin{tikzcd}[column sep=12pt]
    \pi_3\S^3\cong\Z \dar[tail]{i_*} &&  &&\\
    \pi_2(\Imm,u)\arrow{rr}{\delta_{\Imm}} \dar[two heads]{p_u} & &
       \pi_2(\Imm,\Emb,u) \rar{\partial}\dar[shift left=5pt]{\Da}[swap]{\cong} &
       \pi_1(\Emb,u) \rar[two heads] &
       \pi_1(\Imm,u) \dar{p_u}[swap]{\cong}\\
    \pi_3X && \Z[\pi]\uar[shift left=5pt]{\realmap} & & \pi_2X
    \end{tikzcd}
\end{equation}
The left vertical short exact sequence and the isomorphism on the right are obtained by Smale--Hirsch immersion theory from the maps 
\[
    \pi_n(\Imm_\partial(\D^1,X),u)\overset{j_*}{\ra}\pi_n(\Map_\partial(\D^1,X),u) \overset{p_u}{\ra} \pi_{n+1}(X,x_-)
\]
given by inclusion and adjunction. We showed that $\Da\circ\delta_{\Imm}\circ i_*\colon\Z\to\Z[\pi]$ sends 1 to 1 in \cite[Prop.4.28]{KT-highd}, leading to the following definition.
\begin{defn}\label{def:dax} 
    The homomorphism $\md_u\colon\pi_3X \to \RGR$ is obtained from the composite $\Da\circ\delta_{\Imm}$ by the canonical identification $\Z[\pi\sm 1] \cong \Z[\pi]/\langle 1 \rangle$.
    % induced by the inclusion $\pi \sm 1 \subseteq \pi$. 

    In other words, for $a\in \pi_3(X,x_-)$ we pick any map $S_a\colon \I^2 \to \Imm_\partial(\D^1,X)$ taking the entire $\partial \I^2$ to $u$ and such that the adjoint of the union $S_0\cup_{\partial\I^2}S_a$ is a representative of $a$, where $S_0(\vec{t})=u$ for all $\vec{t}\in\I^2$. Then $\md_u(a)=\Da(S_a)$ modulo the coefficient at $1$.
\end{defn}

In particular, the Dax invariant induces an isomorphism
\begin{equation}\label{eq:Da-final}
    \begin{tikzcd}
    \Da\colon\:
    \ker\big(p_u\colon\pi_1(\Emb,u)\sra\pi_2X\big)
    \rar{\cong} & \faktor{\RGR}{\md_u(\pi_3X)}.
    \end{tikzcd}
\end{equation}
whose inverse is the mentioned realization map $\realmap$ from \cite[Sec.4.1.4]{KT-highd}.

Next consider all maps $\Map\coloneqq\Map_\partial(\D^1,X)$ of arcs into $X$ with boundary $k_0$ and compare the pair sequence \eqref{eq:pair-seq} with that for the pair $(\Map,\Emb)$:
\begin{equation}\label{eq:Map-seq}
    \begin{tikzcd}[column sep=12pt]
  \pi_3\S^3\cong\Z \dar[tail]{i_*} \arrow{rr}{1\ \mapsto \ 1} && \Z[\pi]\dar[shift left=5pt]{\realmap} & & \\
    \pi_2(\Imm,u)\arrow{rr}{\delta_{\Imm}} \dar[two heads]{j_*} & &
       \pi_2(\Imm,\Emb,u) \dar[two heads] \rar{\partial}\uar[shift left=5pt]{\Da}[swap]{\cong} &
       \pi_1(\Emb,u)\dar[equal] \rar[two heads] &
       \pi_1(\Imm,u) \dar{j_*}[swap]{\cong}\\
     \pi_2(\Map,u)\arrow{rr}{\delta_{\Map}} \dar{p_u}[swap]{\cong} & &
       \pi_2(\Map,\Emb,u) \rar{\partial} \dar[shift left=5pt]{\Da}[swap]{\cong}&
       \pi_1(\Emb,u) \rar[two heads] &
       \pi_1(\Map,u) \dar{p_u}[swap]{\cong}\\  
    \pi_3X \arrow{rr}{\md_u} && \RGR \uar[shift left=5pt]{\realmap} & & \pi_2X
    \end{tikzcd}
\end{equation}
% The isomorphisms $p_u$ at the bottom are simple adjunctions, whereas the properties of the maps induced by the inclusion $\Imm\subseteq\Map$ follow from Smale--Hirsch theory.  
The advantage of working with the lower pair sequence is that we can use the $\Da$ homomorphism (with values in $\RGR$, i.e.\ not counting double points with trivial group element) without having to worry about arcs being immersed. For example, $\md_u=\Da\circ\delta_{\Map}\circ p_u^{-1}$ is computed as explained in the paragraph below Definition~\ref{def:dax}, with the only change that we only need to pick $S_a\colon\I^2\to \Map_\partial(\D^1,X)$. We still perturb the corresponding map $\wt S(\vec{t},\theta)=(\vec{t},S_a(\vec{t})(\theta))$ to be a generic immersion, but then only count its double points with nontrivial group element, such that cusps do not matter.

It is clear that $\md_u$ only depends on the homotopy (=isotopy) class of the neat arc $u$. In particular, if $u$ is homotopic rel.\ endpoints into $\partial X$,
% the dependence completely disappears, so
we have
\begin{equation}\label{eq:dax-def}
    \md\coloneqq\md_u\colon\pi_3X\to\Z[\pi\sm1]
\end{equation}
which is a homomorphism
depending only on the component of $\partial X$ that contains the endpoints of $u$. 
Precisely this case occurs in our setting $X=M_G$ of Section~\ref{sec:space-level}, since there exists $\D^2\to M_G$ with boundary $k=k_-\cup u$. 

In Corollary~\ref{cor:Dax-FQ} we will show that the image of $\md$ lies in the subgroup $\Z[\pi\sm1]^\sigma$ of invariants under the involution $\sigma(g)=\ol{g}=g^{-1}$.

% \begin{remark}
%     \label{rem:cusp}
% The fact that $\delta_{\Imm}$ is nontrivial on the ``cusp homotopy'' coming from $\pi_3\S^3$ means that in the following sections, we will always work modulo the coefficient of $1$ in the Dax invariant. This is necessary since the relative class represented by $F$ will correspond to a choice of homotopy between embeddings of disks and we'll need to remove the dependence on that choice. 
% \end{remark}

\subsection{Group commutators and Whitehead products}\label{sec:dax-properties}
The exact sequence \eqref{eq:Map-seq} actually reduces to the following group extension. Recall that
\begin{equation}\label{eq:lambdabar}
    \lambdabar\colon\pi_2X\times\pi_2X\to\Z[\pi\sm1]
\end{equation}
is the reduced intersection form $\lambdabar\coloneqq\lambda-\lambda_1$, where $\lambda$ counts the intersection points of two maps $\S^2\to X$ (which can be assumed to be in general position) together with signs and group elements, cf.\ Definition~\ref{def:mu_3}. To get the reduced form $\lambdabar$ we subtract the coefficient $\lambda_1$ next to the trivial group element.
\begin{prop}\label{prop:central}
    There is a central extension of groups
\[\begin{tikzcd}
        \faktor{\Z[\pi\sm1]}{\md_u(\pi_3X)} \arrow[tail]{r}{\partial\realmap}  & \pi_1(\Emb_\partial(\D^1,X),u) \arrow[two heads]{r}{p_u} &
             (\pi_2X,+)
\end{tikzcd}
\]
    with the commutator of $\ul{f_i} \in \pi_1(\Emb_\partial(\D^1,X),u)$ given by the formula
\[
    [\ul{f_1},\ul{f_2}]= \partial\realmap\big(\lambdabar(p_u\ul{f_1},p_u\ul{f_2})\big).
\]
    As a consequence, $\pi_1(\Emb_\partial(\D^1,X),u)$ is abelian if and only if the image of $\lambdabar$ is contained in $\md_u(\pi_3X)\subseteq\Z[\pi\sm 1]$. 
\end{prop}
Note that in higher ambient dimensions, the analogue of this group extension takes place in the category of abelian (higher homotopy) groups, so the commutator pairing can only be interesting in dimension 4. And indeed, we get that it is usually nontrivial, as was discussed after Proposition~\ref{prop:commutators}.
That result will follow from Proposition~\ref{prop:central} applied to $X=M_G$, see Section~\ref{sec:main-proofs}.

For the proof of Proposition~\ref{prop:central} we introduce a convenient notion of ``parametrized connected sums'' and prove a lemma about them, Lemma~\ref{lem:commutator}. 

Given $\ul{f_1},\ul{f_2}\in \pi_1(\Emb_\partial(\D^1,X),u)$, by a preliminary homotopy we may assume that they are represented by maps $f_i\colon\I\to\Emb_\partial(\D^1,X)$ supported on disjoint open subintervals $J_1,J_2\subseteq \D^1$, in the sense that for $\theta\notin J_i$ and all $t\in\I$ we have $f_i(t)(\theta)=u(\theta)$. By general position, we may also assume $f_i(\I)(J_i)\cap u(\D^1)=\emptyset$ for both $i$.

\begin{defn}\label{def:param-conn-sum}
    For $f_1,f_2\colon\I\to\Emb_\partial(\D^1,X)$ define the parametrized connected sum $f_1\#f_2\colon\I^2 \to \Imm_\partial(\D^1,X)$ by
\[
    f_1\#f_2(t_1,t_2)(\theta) \coloneqq
    \begin{cases}
        f_1(t_1)(\theta), & \text{ for } \theta\in J_1, \\
        f_2(t_2)(\theta), & \text{ for } \theta\in J_2, \\
        u(\theta), & \text{ for } \theta\in\D^1\sm(J_1\cup J_2).
    \end{cases}
\]
\end{defn}
The boundary $\partial(f_1\#f_2)\colon\partial\I^2\to \Emb_\partial(\D^1,X)$ is equal to the commutator $[f_1,f_2]=f_1 f_2 f_1^{-1} f_2^{-1}\in \Omega\Emb_\partial(\D^1,X)$. This can be seen by schematically labeling the domain square:
\begin{equation}\label{eq:commut-schema}\begin{tikzcd}
      u\arrow{r}[font=\scriptsize]{f_1} & u \\
      u\arrow{r}[swap,font=\scriptsize]{f_1} \arrow{u}{f_2}[swap,font=\scriptsize]{\quad f_1\#f_2} 
      & u \arrow{u}[swap,font=\scriptsize]{f_2}
\end{tikzcd}\quad\cong\quad
\begin{tikzcd}
     u\arrow[equals]{rr}\arrow[equals]{d}{\quad \quad f_1\#f_2} && u\arrow[equals]{d}\\
     u \arrow{rr}[swap,font=\scriptsize]{[f_1,f_2]} && u
\end{tikzcd}
\end{equation}
Moreover, each $f_1\#f_2(t_1,t_2)$ is a local embedding (hence an immersion), so $f_1\#f_2$ represents an element in $\pi_2(\Imm_\partial(\D^1,X),\Emb_\partial(\D^1,X),u)$ with boundary $\partial(\ul{f_1\#f_2})=[\ul{f_1},\ul{f_2}]\in\pi_1(\Emb_\partial(\D^1,X),u)$. We saw above that this relative group is isomorphic to $\Z[\pi]$ via $\Da$, computed using certain generic representatives in which only finitely many arcs have double points. We will now arrange this for our map $f_1\#f_2$.

First note that the arc $f_1\#f_2(t_1,t_2)$ is embedded if and only if we have $f_1(t_1)(J_1)\cap f_2(t_2)(J_2)=\emptyset$. This is generically not true for all $(t_1,t_2)\in\I^2$, but by general position $f_i$ can be chosen so that their adjoints $\I \times \D^1\to X$ are transverse, so intersect in isolated points $y_j=f_1(t_1^j,\theta_1^j)=f_2(t_2^j,\theta_2^j)\in X$ for $(t_i^j,\theta_i^j)\in\I \times \D^1$, $1\leq j\leq r$. As a consequence, the only arcs $f_1\#f_2(t_1,t_2)$ that are not embedded are those with $t_1=t_1^j$, $t_2=t_2^j$ for some $j$, and then $f_1\#f_2(t_1^j,t_2^j)(\theta_1^j)=f_1\#f_2(t_1^j,t_2^j)(\theta_2^j)=y_j$ the only double point of this arc.

\begin{lemma}\label{lem:commutator}
   If $J_1$ is before $J_2$ in the orientation of $\D^1$ then 
\[
    \Da(\ul{f_1\#f_2}) =\lambdabar(p_u\ul{f_1},p_u\ul{f_2}) \in\Z[\pi\sm1].
\]
\end{lemma}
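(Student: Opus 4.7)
The plan is to compare the contributions to $\Da(\ul{f_1\#f_2})$ with the signed intersections of the spheres $S_i\coloneqq p_u\ul{f_i}$ point by point. Choose representatives $f_i$ supported in disjoint open sub-intervals $J_1<J_2\subset\D^1$, and perturb so that the adjoints $F_i\colon\I\times J_i\to X$ are mutually transverse, meeting in finitely many points $y_j=F_1(t_1^j,\theta_1^j)=F_2(t_2^j,\theta_2^j)$, and so that $F_i(\I\times J_i)\cap u(\D^1)=\emptyset$, as already allowed by the text above the lemma. Then $S_i$ is represented by the cylinder $F_i$ capped off by $u|_{J_i}$, and the cap-cap, cap-cylinder, and out-of-support cylinder-cylinder intersections with the other sphere all vanish, so $S_1\pitchfork S_2=\{y_1,\dots,y_r\}$. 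Moreover, since $f_1\#f_2(t_1,t_2)|_{J_i}=f_i(t_i)|_{J_i}$ depends only on $t_i$, the map $\wt{f_1\#f_2}$ has a transverse double point precisely over each $y_j$, with ordered preimages $\theta^-=\theta_1^j<\theta^+=\theta_2^j$ coming from $J_1<J_2$.

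The key step is to identify the double-point loop at $y_j$ from Definition~\ref{def:Dax} with the $\lambda$-loop $\gamma_1\cdot\gamma_2^{-1}$, where $\gamma_i\coloneqq f_i(t_i^j)|_{[-1,\theta_i^j]}$ is the natural path along $S_i$ from $x_-$ to $y_j$. Unfolding Definition~\ref{def:param-conn-sum} shows that $f_1\#f_2(t_1^j,t_2^j)|_{[-1,\theta_1^j]}$ is literally $\gamma_1$, while $f_1\#f_2(t_1^j,t_2^j)|_{[-1,\theta_2^j]}$ equals $f_1(t_1^j)|_{[-1,\sup J_1]}\cdot u|_{[\sup J_1,\inf J_2]}\cdot f_2(t_2^j)|_{[\inf J_2,\theta_2^j]}$. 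The rescaled family $s\mapsto f_1(s\,t_1^j)|_{[\inf J_1,\sup J_1]}$ provides a homotopy rel.\ endpoints from $u|_{[\inf J_1,\sup J_1]}$ to $f_1(t_1^j)|_{[\inf J_1,\sup J_1]}$ — endpoints fixed because $f_1$ is supported in the open $J_1$ — so this second factor is path-homotopic rel.\ endpoints to $u|_{[-1,\inf J_2]}\cdot f_2(t_2^j)|_{[\inf J_2,\theta_2^j]}=\gamma_2$, and the two loops agree in $\pi=\pi_1X$.

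For signs, since $f_1\#f_2$ varies only in $t_1$ on $J_1$ and only in $t_2$ on $J_2$, at the two preimages of $y_j$ the differential $d\wt{f_1\#f_2}$ sends the frame $(\partial_{t_1},\partial_{t_2},\partial_\theta)$ to $(\partial_{t_1}+v_1,\partial_{t_2},w_1)$ and $(\partial_{t_1},\partial_{t_2}+v_2,w_2)$ in $T_{\vec t_j}\I^2\oplus T_{y_j}X$, where $v_i\coloneqq\partial_{t_i}f_i(t_i^j)(\theta_i^j)$ and $w_i\coloneqq\partial_\theta f_i(t_i^j)(\theta_i^j)$ span $T_{y_j}S_i$ in its induced orientation. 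A direct $6\times6$ determinant computation then shows that the orientation in \eqref{eq:Dax-sign} (with first sheet $\theta^-$) is positive iff $(v_1,w_1,v_2,w_2)$ is a positive basis of $T_{y_j}X$, which is precisely the sign convention for the transverse intersection $S_1\pitchfork S_2$ entering $\lambda$. Summing over $j$ yields $\Da(\ul{f_1\#f_2})=\lambda(S_1,S_2)\in\Z[\pi]$, and passing to the bottom row of \eqref{eq:Map-seq} drops the coefficient of $1$ on both sides, giving the claimed equality $\Da(\ul{f_1\#f_2})=\lambdabar(p_u\ul{f_1},p_u\ul{f_2})\in\Z[\pi\sm1]$. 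The main obstacle is the orientation bookkeeping: both \eqref{eq:Dax-sign} and the Hermitian convention for $\lambda$ order the two sheets, and one must verify that the convention $J_1<J_2$ matches the argument order $(p_u\ul{f_1},p_u\ul{f_2})$ rather than its swap.
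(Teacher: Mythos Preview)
Your proof is correct and follows the same approach as the paper: both identify the double points of $\wt{f_1\#f_2}$ with the transverse intersections $S_1\pitchfork S_2$, match the Dax loop with the $\lambda$-whisker, and verify the signs agree. Your argument is in fact somewhat more explicit than the paper's—you give the rescaling homotopy $s\mapsto f_1(st_1^j)|_{J_1}$ for the loop identification and a direct $6\times 6$ determinant for the sign, where the paper argues by tracking column swaps—but the content is the same.
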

\begin{proof}  
    Recall that $p_uf_i\colon\I^2\to \Omega X$ is the union of $(t\mapsto f_i(t)\cdot{u^{-1}})$ and canonical null homotopies of $u\cdot u^{-1}$ for $t\in\partial\I^2$ (recall that $\cdot$ concatenates $\D^1$'s). As $f_i(\I)(J_i)\cap u(\D^1)=\emptyset$, the transverse intersection points $\{y_1,\dots,y_r\}=p_uf_1\cap p_u f_2$, counted by the pairing $\lambda$, correspond exactly to the double points of those arcs $f_1\#f_2(t_1,t_2)$ that are not embedded, counted in the $\Da$ invariant. 
    The Dax loop $g_{y_j}$ goes along a whisker on $f_1$ from $x_-$ to $y_j$ and then back on $f_2$, exactly as in the formula computing $\lambda(p_u\ul{f_1},p_u\ul{f_2})$. 
    
    The signs also agree. If $\mathrm{sgn}_{y_j}(p_uf_1,p_uf_2)=+1$, then $df_1|_{(t^j_1,\theta_1^j)} \oplus df_2|_{(t^j_2,\theta_2^j)}$ orients $T_{y_j}X$, so with the standard vectors $(dt^j_1, dt^j_2)$ orients $T_{(t^j_1, t^j_2)}\I^2 \oplus T_{y_j}X$. For $\Da$, near $\theta^j_i$ the derivative $d(f_1\#f_2)$ is zero in the other direction $t_{3-i}$, so $d(f_1\#f_2)|_{(t_1^j,t_2^j,\theta_1^j)}$ is oriented as $- (dt^j_2\oplus df_1|_{(t_1^j,\theta_1^j)})$, as we had to flip the first two vectors, whereas $d(f_1\#f_2)|_{(t_1^j,t_2^j,\theta_2^j)})$ is oriented as $dt^j_1\oplus df_2|_{(t_2^j,\theta_2^j)}$. It follows that their sum
    orients $T_{(t^j_1, t^j_2)}\I^2 \oplus T_{y_j}X$ (since $dt^j_1$ passes 3 vectors to become first), so $\varepsilon_{y_j}=1$ by its definition~\eqref{eq:Dax-sign}.
    % The signs also agree: if $\mathrm{sgn}_{y_j}(f_1,f_2)=+1$, then 
    % $d(f_1\#f_2)|_{(t_1^j,t_2^j\theta_1^j)}(\I^2\oplus\D^1)\oplus 
    % d(f_1\#f_2)|_{(t_1^j,t_2^j,\theta_2^j)}(\I^2\oplus\D^1)$ 
    % agrees with the orientation of $\I^2 \times X$ by a computation similar to the one in Theorem~\ref{thm:Dax}, so $\varepsilon_{y_j}=+1$.
\end{proof}

% We can now prove Proposition~\ref{prop:central}.
\begin{proof}[Proof of Proposition~\ref{prop:central}]
    The claimed commutator pairing follows immediately from Lemma~\ref{lem:commutator}, since $\partial\realmap$ is the inverse to $\Da$:
    \[
        [\ul{f_1},\ul{f_2} ] = \partial(\ul{f_1\#f_2})
        = (\partial\realmap\circ\Da)(\ul{f_1\#f_2})
        =\partial\realmap(\lambdabar(p_u\ul{f_1},p_u\ul{f_2})).
    \]
    As this clearly vanishes on $\ker(p_u)$, our extension is indeed central.
\end{proof}
Note that for the above proof we could have equally well chosen the opposite order $J_2<J_1$ in $\D^1$. Equivalently, keep $J_1<J_2$ but in Definition~\ref{def:param-conn-sum} use a map $f'_i$ supported on $J_{3-i}$, and isotopic to $f_i$. Observe that $\partial(f'_1{\#}f'_2)(t)=[f'_1,f'_2](t)$ is isotopic to $\partial(f_1\#f_2)(t)=[f_1,f_2](t)$ continuously in $t\in\partial \I^2$. Using this isotopy on an annulus extends $f'_1{\#}f'_2$ to a map
\[
    f_1\ol{\#}f_2\colon\I^2\to\Imm_\partial(\D^1,X),\quad\text{with}\quad\partial(f_1\ol{\#}f_2)=[f_1,f_2].
\]
In particular, $\partial(\ul{f_1\#f_2}-\ul{f_1\ol{\#}f_2})=0$ so the $\Da$ invariant of this difference has to be in $\ker(\partial\realmap)=\langle1,\md_u(\pi_3X)\rangle$. As in Lemma~\ref{lem:commutator} we find $\Da(f_1\ol{\#}f_2)=-\lambda(a_2,a_1)=-\ol{\lambda(a_1,a_2)}$, since now arcs $f_2$ appear as the $\theta_-$-sheet, so:
\[
    \lambda(a_1,a_2)-\ol{\lambda(a_1,a_2)}\in \md_u(\pi_3X).
\]
We now also identify the class in $\pi_3X$ witnessing this.
\begin{prop}\label{prop:Whitehead}
  For the Whitehead product $[a_1,a_2]_{\mathsf{Wh}}\in\pi_3X$ of two elements $a_i\in\pi_2X$ we have
   \[
        \md_u\big([a_1,a_2]_{\mathsf{Wh}}\big)=\lambdabar(a_1,a_2)+\lambdabar(a_2,a_1). 
   \]
\end{prop}
\begin{proof}
    Pick $f_i\colon\S^1\to\Emb_\partial(\D^1,X)$ so that $a_i= \ul{p_uf_i}$ and $f_1\#f_2$ generic as above. Then $f_1\ol{\#}f_2$ is also generic, and gluing them along $\partial\I^2$ gives a map $f_1\#f_2-f_1\ol{\#}f_2\colon\S^2\to\Imm_\partial(\D^1,X)$ for which we show 
    \[
        p_u(f_1\#f_2-f_1\ol{\#}f_2)=[a_1,a_2]_{\mathsf{Wh}}.
    \]
    Let $F_i\colon\S^1\to\Omega X$ be obtained from $p_uf_i$ by null homotoping its part which agrees with $u\cdot u^{-1}$, so $a_i=\ul{F_i}$. Similar homotopies, continuous in $(t_1,t_2)\in\I^2$ and agreeing on $\partial\I^2$, show that in $\Omega X$ we have
    \[
        (f_1\#f_2(t_1,t_2)) \cdot u^{-1}\simeq F_1(t_1)\cdot F_2(t_2)
        \ \text{ and }\  
        (f_1\ol{\#}f_2(t_1,t_2)) \cdot u^{-1}\simeq F_2(t_2)\cdot F_1(t_1)
    \]
   These homotopies glue to a homotopy rel.\ boundary from $p_u(f_1\#f_2-f_1\ol{\#}f_2)$, a map $\I^2\to\Omega X$ defined by $(t_1,t_2)\mapsto (f_1\#f_2-f_1\ol{\#}f_2)(t_1,t_2)$ union canonical null homotopies of $u\cdot u^{-1}\simeq c\coloneqq\const_{u(-1)}$ on $\partial\I^2$, to the map obtained by gluing squares $(t_1,t_2)\mapsto F_1(t_1)\cdot F_2(t_2)$ and $(t_1,t_2)\mapsto F_2(t_2)\cdot F_1(t_1)$. Schematically (on the left the canonical null homotopies are not drawn, but are used for the homotopy):
\begin{equation}\label{eq:htpy-to-samelson}
\begin{tikzcd}[column sep=large]
     uu^{-1}\arrow[equals]{rr}\arrow[equals]{d}{\quad\quad\quad (f_1\#f_2)\cdot u^{-1}} && uu^{-1}\arrow[equals]{d}\\
     uu^{-1} \arrow[dotted]{rr}[font = \scriptsize,swap,description]{[f_1,f_2]\cdot u^{-1}} && uu^{-1}\\
     uu^{-1}\arrow[equals]{rr}\arrow[equals]{u}[swap,near start]{\quad\quad -(f_1\ol{\#}f_2)\cdot u^{-1}} && uu^{-1}\arrow[equals]{u}
\end{tikzcd}\quad\simeq\quad\begin{tikzcd}
     c\arrow[equals]{rr}\arrow[equals]{d}{\quad \quad F_1\cdot F_2} && c\arrow[equals]{d}\\
     c \arrow[dotted]{rr}[font = \scriptsize,swap,description]{[F_1,F_2]} && c\\
     c\arrow[equals]{rr}\arrow[equals]{u}[swap]{\quad \; -(F_2\cdot F_1)} && c\arrow[equals]{u}
\end{tikzcd}
\end{equation}

    The Whitehead product $[a_1,a_2]_{\mathsf{Wh}}\in\pi_3X$ (in fact, its adjoint, the Samelson product, see \cite{GWhitehead-book}) is represented by the map $[F_1,F_2]\colon\I^2\to \Omega X$ taking $(t_1,t_2)$ to the commutator loop $[F_1(t_1),F_2(t_2)]\in\Omega X$, union canonical null homotopies $[F_1(t),u]\simeq c$ for $t\in\partial\I^2$. 
    By cutting the loop direction $\D^1$ in half, we can view the adjoint $[F_1,F_2]\colon\I^2 \times \D^1\to X$ as glued from two cubes along their faces $\I^2\times0$. After a manipulation which is the 3-dimensional analogue of~\eqref{eq:commut-schema}, these cubes become precisely the adjoints of the two squares on the right of~\eqref{eq:htpy-to-samelson}, showing that $[F_1,F_2]$ is homotopic to $p_u(f_1\#f_2-f_1\ol{\#}f_2)$.
\end{proof}

% \begin{remark}\label{rem:commutator}
%     A similar argument shows that for $g,h\in \pi$ elements $\realmap(g),\realmap(h)\colon\I^2\to\Imm_\partial(\D^1,X)$ commute in relative $\pi_2$: they can be constructed using \emph{disjoint} supports $J_i$ and different meridian balls $\mu_i(\ol{\ball^3})$, so there is a null homotopy of their commutator $\realmap(g)\#\realmap(h)\colon\I \times \I^2\to\Imm_\partial(\D^1,X)$ given at $(t_0,t_1,t_2)$ by applying the map $\realmap(g)(t_0,t_1)$ on $J_1$ and $\realmap(h)(t_0,t_2)$ on $J_2$, and $\u$ otherwise.
% \end{remark}

%%%%%%%%%%%%%%%%%%%%%%%%%%%%%%%%%%
\subsection{Some algebraic topology of the dax homomorphisms}\label{sec:dax-proofs}

We next develop an important method to compute the invariant $\md$ from \eqref{eq:dax-def}, defined for a 4-manifold $X$ with basepoint in $\partial X$,  $\pi=\pi_1X$.

First recall that a \emph{quadratic} map $q\colon A\to B$ between abelian groups $A,B$ satisfies the defining condition that the map
\[
(a_1,a_2)\mapsto q(a_1+a_2) - q(a_1) - q(a_2) \in B
\]
is bilinear in $a_1,a_2\in A$. For example, the map $\pi_2X \to \pi_3X$ given by precomposition with the Hopf map $H\colon\S^3\to \S^2$ is well known to be quadratic, with associated bilinear pairing given by the Whitehead product $(a_1,a_2)\mapsto [a_1,a_2]_{\mathsf{Wh}}$.

Every abelian group $A$ admits a universal quadratic map $q_A\colon A\to \Gamma(A)$ with the property that precomposing with $q_A$ gives a bijection between quadratic maps $q\colon A\to B$ and homomorphisms $\Gamma(q)\colon\Gamma(A)\to B$. Here $\Gamma(A)$ is an abelian group depending functorially on $A$. For $A$ free abelian, $\Gamma(A)$ is simply the subgroup of symmetric tensors in $A\otimes A$, and $q_A(a) = a\otimes a$. However, one easily checks that $\Gamma(\Z/2) \cong \Z/4$ with $q_{\Z/2}(1\!\! \mod 2)= 1\!\! \mod 4$.

\begin{theorem}\label{thm:Hopf}
  There is a commutative diagram of short exact sequences of abelian groups
  \[\begin{tikzcd}
       \Gamma(\pi_2X)\arrow[tail]{rr}{\Gamma(-\circ H)}\arrow{d}{\Gamma(\mu_2)}
        && \pi_3X \arrow[two heads]{r}{\mathrm{Hur}} \arrow{d}{\md} 
        & H_3\wt X\arrow{d}{\mu_3} \\
        \faktor{\RGR}{ \langle \ol{g}-g \rangle}\arrow[tail]{rr}{g\ \mapsto\ g+\ol{g}} 
        && \RGR^\sigma \arrow[two heads]{r}{} 
        & \faktor{\Z[\pi]^\sigma}{\langle 1, g+\ol{g} \rangle}
    \end{tikzcd}
    \]
    In particular, $\md(a\circ H) = \mu_2(a) +\overline{\mu_2(a)}= \lambdabar(a,a)$ for all $a\in\pi_2X$. 
\end{theorem}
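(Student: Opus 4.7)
The plan is to identify each of the two rows as a natural short exact sequence and then verify that the three vertical arrows intertwine them. For the top row, I would pass to the universal cover and invoke Whitehead's exact sequence
\[
    H_4\wt X \;\to\; \Gamma(\pi_2\wt X) \;\to\; \pi_3\wt X \;\to\; H_3\wt X \;\to\; 0,
\]
in which the left map is the homomorphism induced, via the universal property of $\Gamma$, by the quadratic map $a\mapsto a\circ H$. Because $X$ is a $4$-manifold with nonempty boundary, $H_4\wt X=0$, yielding a short exact sequence; together with $\pi_iX\cong\pi_i\wt X$ for $i\geq 2$ this is the top row. For the bottom row, each unordered pair $\{g,\ol g\}$ with $g\neq\ol g$ contributes a free summand $\Z\cdot(g+\ol g)$ to $\RGR^\sigma$, while each involution $g=\ol g$ contributes a free summand $\Z\cdot g$ on which the left map takes the value $2g$; this gives short exactness, with cokernel the $\mathbb{F}_2$-vector space on $2$-torsion elements of $\pi$.

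Next I would pin down the three vertical maps. Wall's quadratic self-intersection $\mu_2\colon\pi_2X\to\RGR/\langle\ol g-g\rangle$ induces $\Gamma(\mu_2)$ by the universal property of $\Gamma$. The middle map $\md$ is the one defined in \eqref{eq:dax-def}, with the fact that its image lies in the $\sigma$-invariants being deferred to Corollary~\ref{cor:Dax-FQ}. Finally, $\mu_3$ is the classical self-intersection count for a generic immersion $\D^3\imra\D^2\times X$ associated to a map $a\colon\S^3\to X$; in dimension $6$ this count carries the standard sign indeterminacy $g\sim -\ol g$, hence naturally lands in $\Z[\pi]^\sigma/\langle 1,g+\ol g\rangle$. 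To see that $\mu_3$ descends to $H_3\wt X$, note that two lifts of a Hurewicz class differ by Whitehead products, which by Proposition~\ref{prop:Whitehead} land in the subgroup generated by symmetric sums $g+\ol g$ and hence become trivial in the quotient.

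For the left square, by the universal property of $\Gamma$ it suffices to check that the two quadratic maps $\pi_2X\to\RGR^\sigma$ obtained from the two composites agree: namely, $a\mapsto\md(a\circ H)$ and $a\mapsto\mu_2(a)+\overline{\mu_2(a)}=\lambdabar(a,a)$. Their associated bilinear forms are both equal to $(a_1,a_2)\mapsto\lambdabar(a_1,a_2)+\lambdabar(a_2,a_1)$: for the first, this follows from $(a_1+a_2)\circ H = a_1\circ H + a_2\circ H + [a_1,a_2]_{\mathsf{Wh}}$ combined with Proposition~\ref{prop:Whitehead}, and for the second by direct polarization of the hermitian form. For agreement on a single element, the classical identity $[a,a]_{\mathsf{Wh}} = 2(a\circ H)$ (from $[\id,\id]_{\mathsf{Wh}}=2\eta\in\pi_3\S^2$ and naturality) together with Proposition~\ref{prop:Whitehead} yields $2\md(a\circ H) = 2\lambdabar(a,a)$, and since $\RGR^\sigma$ is torsion-free we may divide by $2$ to obtain $\md(a\circ H)=\lambdabar(a,a)$. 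This is precisely the ``in particular'' clause and implies the commutativity of the left square.

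For the right square, the content is essentially that $\md$ is a sheet-ordered refinement of $\mu_3$: the $\D^2$-factor in the definition of $\md$ pins down an order of sheets at each double point of the associated immersion into $\D^2\times X$, thereby eliminating the $g\sim-\ol g$ indeterminacy that is intrinsic to the $6$-dimensional self-intersection count defining $\mu_3$. Projecting $\RGR^\sigma\twoheadrightarrow\Z[\pi]^\sigma/\langle 1,g+\ol g\rangle$ restores this indeterminacy and recovers $\mu_3$ applied to the Hurewicz image of $a$. I expect the main obstacle to be this last step: carrying out the geometric bookkeeping to verify that forgetting the sheet-ordering turns the $\md$-count term-by-term into the standard $\mu_3$-count, and that the resulting element depends only on the Hurewicz class.
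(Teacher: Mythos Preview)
Your proposal is correct and follows essentially the same route as the paper: Whitehead's certain exact sequence for the top row (using $H_4\wt X=0$), the norm-map description for the bottom row, the universal property of $\Gamma$ reducing the left square to $\md(a\circ H)=\lambdabar(a,a)$, and the key trick $[a,a]_{\mathsf{Wh}}=2(a\circ H)$ combined with Proposition~\ref{prop:Whitehead} and torsion-freeness of $\RGR^\sigma$ to divide by~$2$. The paper does exactly this; your extra verification that the associated bilinear forms agree is harmless but redundant, since two quadratic maps coincide iff they agree pointwise.

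One small imprecision: when you argue that $\mu_3$ descends to $H_3\wt X$, you say two lifts ``differ by Whitehead products''. The kernel of $\mathrm{Hur}$ is the image of $\Gamma(\pi_2X)$, which is generated by Hopf compositions $a\circ H$, and these are not Whitehead products in general (only $2(a\circ H)=[a,a]_{\mathsf{Wh}}$ is). The paper sidesteps this by citing \cite[Lem.~4.2]{ST-LBT} for the factoring of $\mu_3$ through Hurewicz and Lemma~\ref{lem:Dax-mu} for $q\circ\md=\mu_3$. Your route is fine too, but you should reorder: first establish $\md(a\circ H)=\lambdabar(a,a)=\mu_2(a)+\overline{\mu_2(a)}$ (the left square), and then observe that this lies in the image of the norm map, so $\mu_3(a\circ H)=q(\md(a\circ H))=0$, which is exactly what is needed for $\mu_3$ to factor through $H_3\wt X$.
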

The horizontal maps on the right are the Hurewicz map for the universal covering $\wt X$ and the natural quotient map.
The bottom row is exact by construction, whereas the first row is Whitehead's ``certain exact sequence'' \cite{Whitehead-Certain-Exact-Seq}, with the homomorphism out of $\Gamma(\pi_2X)$ determined by the quadratic map that is precomposing with the Hopf map $H\colon\S^3\to \S^2$. It is injective as the next term in Whitehead's sequence is $H_4\wt X$ which vanishes in our setting (since $\partial X\neq\emptyset$).
% , i.e.\ one has
% \begin{equation}\label{eq:Hopf-quadratic}
%     (a_1+a_2)\circ H= a_1\circ H + a_2\circ H + [a_1,a_2]_{\mathsf{Wh}}.
% \end{equation}
Another homomorphism out of $\Gamma(\pi_2X)$ is determined by  Wall's reduced self-intersection invariant $\mu_2\colon\pi_2X\to\RGR/\langle\ol{g}-g\rangle$, analogous to the one from Proposition~\ref{prop-intro:4-term}.
Counting self-intersections of a connected sum, it follows that $\mu_2$ is quadratic:
\begin{equation}\label{eq:mu-2-quadratic}
    \mu_2(a_1+a_2)=\mu_2(a_1)+\mu_2(a_2)+[\lambdabar(a_1,a_2)].
\end{equation}
The last term is the class of the reduced intersection form $\lambdabar$, as in Proposition~\ref{prop:commutators} and \eqref{eq:lambdabar}, modulo $\langle\ol{g}-g \rangle$. Although $\lambda$ is hermitian, i.e.\ $\lambda(a_1,a_2)=\ol{\lambda(a_2,a_1)}$, this reduction makes it symmetric, as required by the $\Gamma$-functor.

Lastly, the map $\mu_3$ on the right is also a Wall invariant, counting self-intersections of $\pi_1$-trivial 3-manifolds immersed in $X \times \R^2$, where the $\R^2$-factor makes $\mu_3$ linear because the bilinear intersection term vanishes, see Section~\ref{subsec:Dax-FQ}. Its target ${\Z[\pi]^\sigma}/{\langle 1, g+\ol{g} \rangle}$ is isomorphic to $\FF_2[T_X]$ (recall that its cokernel is the target of the Freedman--Quinn invariant in Proposition~\ref{prop:disks-spheres}).

For closed 4-manifolds the entire diagram in Theorem~\ref{thm:Hopf} still exists, except for the map $\md$, whose definition requires a nonempty boundary. In fact, there is an $\S^2$-bundle $X$ over $\mathbb{RP}^2$ for which there cannot be a homomorphism $\pi_3X \to \RGR^\sigma$ making the left square commute, see Example~\ref{ex:closed-dax}. 

% \begin{cor} \label{cor:H+W}
%     For $a,a_1,a_2\in\pi_2M$, denote by $[a_1,a_2]_{\mathsf{Wh}}\in\pi_3M$ the Whitehead product. Then in $\RGR^\sigma$ we have
%     \[
%         \md(a\circ H) = \lambdabar(a,a)  \quad \text{ and } \quad
%         \md([a_1,a_2]_{\mathsf{Wh}}) = \lambdabar(a_1,a_2) + \lambdabar(a_2,a_1).
%     \]
% \end{cor}
% \begin{proof}
%     The left commuting square in Theorem~\ref{thm:Hopf} says that $\md(a\circ H) = \mu_2(a) + \ol{\mu_2(a)}$, and this equals $\lambdabar(a,a)$ modulo the coefficient of $1$. The Whitehead product formula follows from the quadratic property $(a_1+a_2)\circ H= a_1\circ H + a_2\circ H + [a_1,a_2]_{\mathsf{Wh}}$, already used in Whitehead's sequence.
% \end{proof}

% See also Proposition~\ref{prop:Whitehead} for a direct proof of the second formula.

\begin{proof}[Proof of Theorem~\ref{thm:Hopf}]
    The upper sequence is exact by a classical result of Whitehead \cite{Whitehead-Certain-Exact-Seq}: the kernel of the Hurewicz homomorphism is equal to the image of $-\circ H\colon\Gamma(\pi_2X)\to\pi_3X$, where $\Gamma$ is a certain quadratic functor making this precomposition with the Hopf map $H\colon\S^3\to \S^2$ into a homomorphism. The lower sequence is exact since the kernel of the quotient map $q\colon \Z[\pi]^\sigma \sra \Z[\pi]^\sigma/\langle 1, g+\ol{g}\rangle$ is precisely the image of the norm map $\Z[\pi\sm1]\to\Z[\pi\sm1]^\sigma$, $g\mapsto g+\ol{g}$, whose kernel is generated by elements $\ol{g}-g$.
    
    In \cite[Lem.4.2]{ST-LBT} it was shown that $\mu_3$ factors through the Hurewicz map $\pi_3X \sra H_3(\wt X)$, and the right hand side square commutes by Lemma~\ref{lem:Dax-mu}.
    Moreover, since $\mu_3$ vanishes on $\im(-\circ H)$, on that image $\md$ must have values in $\ker(q)$. It remains to show that the left hand side square commutes, where both maps out of the $\Gamma$-group come from quadratic maps on $\pi_2X$. By the universal property of $\Gamma$ it suffices to show
    that $\md(a\circ H)$ equals $\mu_2(a) + \ol{\mu_2(a)}$ for all $a\in\pi_2X$. Counting intersection of $a$ with a push-off gives the well known formula  $\lambdabar(a,a)=\mu_2(a)+\ol{\mu_2(a)}$ that reduces things to $\lambdabar(a,a)$.
    
    Since $[\iota,\iota]_{\mathsf{Wh}}=2H$ we get $[a,a]_{\mathsf{Wh}}=a\circ[\iota,\iota]_{\mathsf{Wh}}=a\circ 2H=2(a\circ H)$. From Proposition~\ref{prop:Whitehead} it follows that $\md([a,a]_{\mathsf{Wh}}) = 2\lambdabar(a,a)$ and hence
    \[
        2 \cdot \md(a\circ H) = \md(2(a\circ H)) = \md([a,a]_{\mathsf{Wh}}) = 2\cdot \lambdabar(a,a).
    \]
    Notice that this equation lives in the abelian group $\Z[\pi\sm1]^\sigma$, which is torsion-free as a subgroup of the free abelian group $\Z[\pi\sm1]$. Therefore, we can divide both sides of the last equation by $2$, giving the desired result.
\end{proof}

For closed 4-manifolds, the outer homomorphisms $\mu_2,\mu_3$ in Theorem~\ref{thm:Hopf} still work but there cannot exist a homomorphism $\md$ making the left square commute, as the following example demonstrates.
\begin{example}\label{ex:closed-dax}
    Consider $\S^2 \times \S^2$ with generators $a_1, a_2 \in\pi_2(\S^2 \times \S^2)$ coming from the factors. Then the Whitehead product $[a_1,a_2]_{\mathsf{Wh}}$ vanishes by definition and so $a_1\otimes a_2 + a_2\otimes a_1\in \Gamma(\pi_2(\S^2 \times \S^2))$ is in the kernel of $-\circ H$. It is also the image of the generator coming from $H_4(\S^2 \times \S^2;\Z)$.
    
    The group $\Z/2= \langle t \rangle$ acts freely on $\S^2 \times \S^2$ via $t\cdot(v_1,v_2) = (-v_1, -v_2)$ and if we define $N$ to be its quotient then $\mu_2(a_1\otimes a_2 + a_2\otimes a_1) = \mu_2(a_1+a_2) - \mu_2(a_1)-\mu_2(a_2) = [\lambda(a_1,a_2)] = [1- t]=[-t] \neq 0$.
    Hence a homomorphism $\md$ making the left diagram in Theorem~\ref{thm:Hopf} commute cannot exist for $N$.

    However, if we remove an open $4$-ball from $N$ to create a $4$-manifold $X$ with boundary, then $-\circ H$ becomes an isomorphism and $\md([a_1,a_2]_{\mathsf{Wh}})$ becomes twice the generator in $\Z[\pi_1X\sm 1]^\sigma \cong\Z$ as required by our diagram. Since the projections of $a_i$ to $N$ are embedded spheres, $\mu_2$ vanishes on them, so does $\md$, and the target of the Dax invariant is $\Z[\pi_1X \sm 1]^\sigma/\md(\pi_3X)=\Z/2$ (and agrees with the target of the Freedman--Quinn invariant).
\end{example}
Recall that Lemma~\ref{lem:M-c} determines the image of the homomorphism $\md$ for the manifolds $M_c=\S^1 \times \D^3\cup_c h^2$.
\begin{proof}[Proof of Lemma~\ref{lem:M-c}]
    Whenever $\pi$ has no elements of order 2, the target of $\mu_3$ in Theorem~\ref{thm:Hopf} vanishes. All 4-manifolds $M_c$ have the homotopy type of $\S^1\vee \S^2$ because $c$ is null homotopic. It follows that $H_3\wt M_c=0$ and hence the image of $\md$ can be computed from $\Gamma(\mu_2)$. 

    For a free abelian group $A$, Whitehead's group $\Gamma(A)$ is freely generated by symmetric tensors $a_i\otimes a_i$ and $a_i\otimes a_j + a_j \otimes a_i$, where $a_i$ runs through an ordered basis for $A$ and $i>j$. Since $A=\pi_2M_c$ and $\pi_2M_c$ is a free $\Z[\pi]$-module with one generator $S_c$, our basis is $a_i=t^i\cdot S_c$, $i\in\Z$. Thus, 
    \begin{align*}
        \Gamma(\mu_2)(a_i\otimes a_i) &= \mu_2(a_i)  = t^i\cdot\mu_2(S_c)\cdot t^{-i}= \mu_2(S_c),\\
        \Gamma(\mu_2)(a_i\otimes a_j{+}a_j\otimes a_i) &= \mu_2(a_i{+}a_j) - \mu_2(a_i) - \mu_2(a_j) 
        \\&= \lambdabar(a_i,a_j)\, \wh{=}\, t^{i-j}\cdot\mu_2(S_c)
    \end{align*}
    % for $i>j$ equations in $t\cdot\Z[t]$ of the form.
    in $t\cdot\Z[t]$.
    The last equality uses $\lambdabar(S_c,S_c)=\mu_2(S_c)+\ol{\mu_2(S_c)}$ and our identification of the targets of these maps with $t\cdot\Z[t]$. It follows that the image of $\Gamma(\mu_2)$ is exactly the ideal generated by $\mu_2(S_c)$.

    To realize a given polynomial $f$, we start with the unknot in $\S^1 \times \S^3$ and for each $\pm b_it^i$ in $f$ we do $b_i$ finger moves along the group element $t^i$, with the crossing change of correct sign $\pm$, see Figure~\ref{fig:Kirk}.
\end{proof}

Let now $M$ be a $4$-manifold and $k\colon\S^1\hra\partial M$ a knot null homotopic in $M$, and which has a dual $G\colon\S^2\hra \partial M$. As before, let $M_G$ denote the $4$-manifold obtained from $M$ by attaching a 3-handle along $G$, and $\pi\coloneqq \pi_1M$.
\begin{lemma}\label{lem:lambda-splits}
    The inclusion $M\subseteq M_G$ induces an isomorphism $\pi\cong\pi_1M_G$, a surjection $\pi_3M\sra\pi_3M_G$, and a short exact sequence of $\Z[\pi]$-modules
    \[\begin{tikzcd}
        \Z[\pi]\arrow[tail]{r}{\cdot G} 
        & \pi_2M\arrow[two heads]{r}
        % \arrow[bend left]{l}{} 
        & \pi_2M_G.
    \end{tikzcd}
    \]
    with a splitting on the left given by $\lambda^{rel}_M(\U,\bull)$ for the relative intersection form $\lambda^{rel}_M\colon\pi_2(M,\partial M)\times\pi_2(M)\to\Z[\pi]$ between disks and spheres.
\end{lemma}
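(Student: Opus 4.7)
The plan is to analyze the pair $(M_G, M)$ via the long exact sequence of homotopy groups, together with a direct geometric splitting coming from the relative intersection form. At the CW level, $M_G$ is obtained from $M$ by attaching a single $3$-cell with attaching map $G\colon\S^2\to\partial M\hookrightarrow M$ (the rest of the $3$-handle $\D^3\times\D^1$ deformation retracts onto the core). This description drives every step.

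First, the isomorphism $\pi_1 M\cong\pi_1 M_G$ is immediate from general position: any loop or null-homotopy in $M_G$ can be pushed off the co-core $\D^1$ of the $3$-handle in a $4$-manifold. Next, I would pass to universal covers: $\wt M_G$ is obtained from $\wt M$ by attaching one $3$-cell for each lift $\gamma\cdot\wt G$, $\gamma\in\pi$. Hence the relative cellular chain complex of $(\wt M_G,\wt M)$ is concentrated in degree $3$ and equals $\Z[\pi]$, so the pair is $2$-connected, and by the relative Hurewicz theorem
\[
    \pi_3(M_G,M)\cong H_3(\wt M_G,\wt M)\cong \Z[\pi],
\]
with connecting homomorphism $\partial\colon\pi_3(M_G,M)\to\pi_2 M$ sending the generator to $[G]$, i.e.\ $\partial=(\cdot G)$. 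Plugging this into the long exact sequence of the pair yields
\[
    \pi_3 M\to\pi_3 M_G\to\Z[\pi]\xrightarrow{\,\cdot G\,}\pi_2 M\to\pi_2 M_G\to 0,
\]
which already gives the right-hand surjectivity. To obtain the claimed short exact sequence and the surjection $\pi_3 M\twoheadrightarrow\pi_3 M_G$, it suffices to produce a left inverse to $(\cdot G)$.

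This left inverse is where the assumption on $\U$ and $G$ enters. I would set the splitting to be $\lambda^{rel}_M(\U,\bull)\colon\pi_2 M\to\Z[\pi]$, using the equivariant intersection count between a lift $\wt\U\subset\wt M$ of the disk and $\pi$-translates of a lift $\wt a$ of a sphere. Because $\U$ is neat, its only intersection with $G\subset\partial M$ occurs on the boundary at the single transverse point $k\pitchfork G$, which is positive by our orientation convention; hence $\lambda^{rel}_M(\U,G)=1$ and, by equivariance, $\lambda^{rel}_M(\U,\gamma\cdot G)=\gamma$ for every $\gamma\in\pi$. This proves $\lambda^{rel}_M(\U,\bull)\circ(\cdot G)=\mathrm{id}_{\Z[\pi]}$, which simultaneously gives the splitting, injectivity of $(\cdot G)$, and, by exactness of the sequence above, surjectivity of $\pi_3 M\to\pi_3 M_G$.

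The main obstacle is the identification in the Hurewicz step: verifying that under the relative Hurewicz isomorphism the boundary map becomes precisely multiplication by $[G]$ as a map of $\Z[\pi]$-modules, with correct signs and $\pi$-equivariance. This is essentially the statement that the attaching map of the relative $3$-cell is $G$ and that the $\pi$-action on the lifts matches the left $\Z[\pi]$-module structure on $\pi_2 M$; once this bookkeeping is settled, the splitting computation $\lambda^{rel}_M(\U,G)=1$ closes the argument cleanly.
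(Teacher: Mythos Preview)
Your proof is correct and follows essentially the same route as the paper: attaching the $3$-handle is a $3$-cell attachment, so $\pi_1$ is unchanged and $\pi_3(M_G,M)\cong\Z[\pi]$ is free on the handle; the long exact sequence of the pair then reduces everything to exhibiting $\lambda^{rel}_M(\U,\bull)$ as a left inverse to $\cdot G$, which both you and the paper verify via the single transverse point $k\pitchfork G$. The only cosmetic difference is that the paper phrases the computation of $\lambda^{rel}_M(\U,G)=1$ by first pushing $G$ off $\partial M$ into the interior, whereas you count the boundary intersection directly; these amount to the same thing.
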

\begin{proof}
    Since attaching a $3$-handle is homotopy equivalent to attaching a $3$-cell, we immediately get $\pi_iM\cong\pi_iM_G$ below degree $2$. Moreover, the relative homotopy group $\pi_3(M_G,M)$ is the free $\Z[\pi]$-module spanned by $h^3$. Once we show that homomorphism $\lambda^{rel}_M(\U,\bull)$ is a splitting, the surjectivity on $\pi_3$ will follow from the long exact sequence of a pair. Indeed, since $G$ is the geometric dual for $k=\partial\U$, we have $\lambda_{\partial M}(k,G)=1$, so a push-off of $G$ intersects $\U$ in the interior with $\lambda^{rel}_M(\U,G)=1$.
\end{proof}

% As in Section~\ref{sec:space-level}, define the neat arc $k_+\colon\D^1\hra M_G$ as $k_+\coloneqq k\cap\nu G$.
% On one hand, we can consider the space of embeddings $\Emb_\partial(\D^1,M_G)$ with boundary condition $\partial k_+$ and basepoint $k_+$. Recall that $k_+$ is homotopic into the boundary if $k$ was null homotopic in $M$; let us write $\md_{M_G}=\md_{u,M_G}$ for the corresponding Dax homomorphism.
% On the other hand, we can consider a neat arc $k_+$ in $M$ homotopic to $k_-\subseteq k$ and with $\partial k_+=\partial u\in\partial M$, so that we have homomorphism $\md_M$ for the space $\Emb_\partial(\D^1,M)$ with basepoint $k_+$.
Recall from \eqref{eq:dax-def} that a homomorphism $\md_X\colon\pi_3X\to\Z[\pi_1X\sm1]$ is defined for any 4-manifold $X$ with a choice of a basepoint in $\partial X$.
\begin{lemma}\label{lem:Dax-M-M_G}
    We have $\md_{M_G} (\pi_3M_G)=\md_M (\pi_3M)$ as subgroups of $\Z[\pi]$.
\end{lemma}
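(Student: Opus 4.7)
The plan is to establish naturality of $\md$ under the inclusion $i\colon M\hra M_G$ and then combine it with the surjection on $\pi_3$ provided by Lemma~\ref{lem:lambda-splits}. From that lemma I already have an isomorphism $\pi_1M\cong\pi_1M_G=\pi$ and a surjection $i_*\colon\pi_3M\sra\pi_3M_G$, so the two Dax homomorphisms have canonically the same target $\RGR$, and once I prove $\md_{M_G}\circ i_*=\md_M$ the equality of images will follow directly from surjectivity of $i_*$.

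To set this up, I fix a basepoint in $\partial M\sm\nu G$, which lies in both $\partial M$ and $\partial M_G$, and choose the arcs defining $\md_M$ and $\md_{M_G}$ to sit inside the boundary components containing that basepoint. By \eqref{eq:dax-def} these homomorphisms depend only on the chosen components, so this is a harmless choice, and both maps land in $\RGR$ for the same $\pi$.

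For the naturality check, given $a\in\pi_3M$ I represent it by $S_a\colon\I^2\to\Map_\partial(\D^1,M)$ as in Definition~\ref{def:dax}, with adjoint $\wt S_a\colon\I^2\times\D^1\to M$ a generic map having finitely many transverse double points. Composing with $i$ gives $i\circ S_a$, a representative of $i_*a$, and its adjoint $i\circ\wt S_a\colon\I^2\times\D^1\to M_G$ has exactly the same double points, with identical local signs and identical double-point loops; these loops represent the same elements of $\pi$ via $\pi_1M\cong\pi_1M_G$. Hence $\md_{M_G}(i_*a)=\md_M(a)$, and consequently
\[
\md_{M_G}(\pi_3M_G)=\md_{M_G}(i_*\pi_3M)=\md_M(\pi_3M).
\]

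I do not anticipate a significant obstacle: the argument boils down to the observation that the Dax count is intrinsic to the generic map $\wt S_a$ and is unaffected by enlarging the ambient $4$-manifold from $M$ to $M_G$. The only point that requires mild care is matching basepoints and auxiliary arcs across the inclusion, which is straightforward because $\partial M\sm\nu G$ is a common subset of $\partial M$ and $\partial M_G$.
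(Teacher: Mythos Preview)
Your proposal is correct and follows essentially the same approach as the paper: both argue that a single family of arcs in $M$ computes $\md_M(a)$ and $\md_{M_G}(i_*a)$ simultaneously, giving $\md_{M_G}\circ i_*=\md_M$, and then invoke the surjectivity of $i_*\colon\pi_3M\sra\pi_3M_G$ from Lemma~\ref{lem:lambda-splits} to conclude equality of images. Your treatment of basepoints and auxiliary arcs in $\partial M\sm\nu G\subset\partial M_G$ is slightly more explicit than the paper's, but the argument is the same.
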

\begin{proof}
    The following diagram commutes:
    \[\begin{tikzcd}[column sep=large]
          \pi_3M\arrow[two heads]{r}{p}\arrow[bend right=15pt]{rr}[swap]{\md_{M}} & \pi_3M_G\arrow{r}{\md_{M_G}} & \Z[\pi]
        \end{tikzcd}
    \]
    since attaching a handle to $\partial M$ does not have influence on the calculation of $\md_{M}([f])\coloneqq\Da(F)$. Indeed, for $f\colon\S^3\to M$ represented by $F(\vec{t})\colon\D^1\imra M$, the same family also computes $\md_{M_G}([f])$, implying $\im\md_M\subseteq\im\md_{M_G}$. The other inclusion follows since the map $p$ is surjective by Lemma~\ref{lem:lambda-splits}: if $r=\md_{M_G}(a)$ for $a\in\pi_3M$, then $r=\md_M(b)$ for $b=p(a)$.
\end{proof}

For the following lemma we assume that $k\colon\S^1\hra M$ lies in a component $\partial_0M$ of $\partial M$ that is diffeomorphic to $\S^1 \times \S^2$. If $\U\colon\D^2\hra M$ has boundary $k$, we can use it to ambiently surger $\partial_0M$ into a 3-sphere $C\colon\S^3\hra M$ that decomposes $M$ into a connected sum $M\cong (\D^2 \times \S^2) \# M'$. Then $\U=\D^2 \times pt, \ k=\partial\D^2 \times pt, \ G = 0 \times \S^2$ and $\pi=\pi_1M\cong\pi_1M'$.
\begin{lemma}\label{lem:g+g-inv}
   In this setting, $\md(C)=0$ and for all $g\in\pi\sm1$ we have $\md(g\cdot[C])=g+\ol{g}$, where we use the usual $\pi$-action on $\pi_3M$.
\end{lemma}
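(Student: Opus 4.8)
The plan is to compute $\md(C)$ and $\md(g\cdot[C])$ directly from Definition~\ref{def:dax}, i.e.\ by exhibiting generic families of arcs representing these classes under the isomorphism $\pi_3M\cong\pi_2(\Map_\partial(\D^1,M),u)$ and counting the double points that carry a nontrivial group element. It is convenient to call the two pieces into which the embedded sphere $C$ separates $M$ the \emph{$\D^2\times\S^2$-side} and the \emph{$M'$-side}; the endpoints of the basepoint arc $u$ lie on $\partial_0M$, which is a boundary component of the simply connected $\D^2\times\S^2$-side, and under the identification $\pi\cong\pi_1(M')$ every loop at the basepoint representing a nontrivial element of $\pi$ necessarily crosses $C$.

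For $\md(C)=0$: since $C\cong\S^3$ is embedded and can be joined to the basepoint by a whisker inside the $\D^2\times\S^2$-side, the class $[C]$ is represented by a family of arcs whose total map $\I^2\times\D^1\to M$ is, away from a contractible region near $u$, a reparametrization of the embedding $C\colon\S^3\hra M$: concretely, postcompose the collapse $\I^2\times\D^1=\D^3\to\D^3/\partial\D^3=\S^3$ (modified near $\partial\D^3$ so as to degenerate onto $u$ rather than a point) with $C$. Every double point of a generic perturbation then lies in that contractible region, so all double-point loops \eqref{eq:Dax-dp-loop} are trivial; hence the $\Da$-invariant of this family lies in $\Z\cdot1$, giving $\md(C)=0$. (Alternatively, pass to $M_G$ and invoke Lemma~\ref{lem:Dax-M-M_G}.)

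For $\md(g\cdot[C])=g+\ol{g}$: represent $g\cdot[C]$ by conjugating the family above by a loop $\gamma$ at the basepoint representing $g$, so that the new family sweeps out ``$C$ tubed along $\gamma$'' as in Example~\ref{ex:interior-conn-sum}. We take $\gamma$ to cross $C$ transversely in exactly two points, $p_1$ (where $\gamma$ leaves the $\D^2\times\S^2$-side) and $p_2$ (where it returns). After making the conjugated family generic, its double points are those of the previous representative (trivial loops) together with those produced where the tube along $\gamma$ meets the main sheet $C$, namely near $p_1$ and near $p_2$. A local model at each $p_i$, in the spirit of the proof of Lemma~\ref{lem:commutator}, shows it contributes a single double point, with double-point loops $g$ and $\ol{g}$ respectively --- the ordering $\theta_i^-<\theta_i^+$ determining the loop as in \eqref{eq:Dax-dp-loop} --- and with the \emph{same} sign $+1$. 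Thus the $\Da$-invariant of the conjugated family equals $g+\ol{g}$ modulo $\Z\cdot1$, so $\md(g\cdot[C])=g+\ol{g}$.

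The main obstacle is this last count, and in particular the sign claim. One must establish, via the local model near $p_1$ and $p_2$, that exactly two new double points with nontrivial loops arise, that their loops are $g$ and $\ol{g}$, and --- crucially --- that the net effect of the two sign reversals (one because the two local sheets enter the double-point loop formula in the opposite order at $p_2$ than at $p_1$, one because the two crossings have opposite orientations) is $+1$ at both points. This is exactly where the absence of the usual six-dimensional indeterminacy $g=-\ol{g}$ in Definition~\ref{def:Dax} is essential: it forces the two contributions to \emph{add}, yielding $\md(g\cdot[C])=g+\ol{g}\in\Z[\pi\sm1]^\sigma$ (consistently with Corollary~\ref{cor:Dax-FQ}), rather than the value $0$ one would naively read off from the freely --- but not based --- embedded sphere isotopic to $C$.
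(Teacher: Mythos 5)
Your overall route coincides with the paper's: represent $g\cdot[C]$ by conjugating the family of arcs sweeping out $C$ by a loop $\gamma$ representing $g$, note that $\gamma$ meets $C$ transversely in two points of opposite sign, and obtain $\md(g\cdot[C])$ from the two resulting double points, with loops $g$ and $\ol{g}$ and both signs $+1$; your argument that $\md(C)=0$ (all double points have loops in the simply connected $\D^2\times\S^2$-side) is also fine and consistent with Example~\ref{ex:interior-conn-sum}.

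However, the proposal stops exactly where the content of the lemma lies, and you say so yourself: the ``main obstacle'' is to show that near $p_1,p_2$ there are precisely two double points with nontrivial loops, that these loops are $g$ and $\ol{g}$, and that both signs are $+1$. You assert this follows from ``a local model in the spirit of Lemma~\ref{lem:commutator}'' but never set up such a model or evaluate the orientation comparison \eqref{eq:Dax-sign}; the bookkeeping you offer (one reversal from the ordering of the sheets, one from the opposite orientations of the two crossings) is a plausibility argument rather than a computation, and it blurs the two distinct roles of the $\theta$-ordering: in the loop formula \eqref{eq:Dax-dp-loop} it is responsible for getting $\ol{g}$ rather than $g$ at the second point, while in the ordered direct sum \eqref{eq:Dax-sign} it contributes to the sign. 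The paper's proof consists precisely of the step you defer: it chooses a concrete family (drag the midpoint of $u$ along $\gamma$ until it reaches $C$, swing around $C$ using a fixed foliation by arcs, then return), arranges that exactly one arc of the family is non-embedded, with double points $y_\pm$ as in Figure~\ref{fig:g+g-inv}, reads off the loop $g$ at $y_+$ together with a visibly positive basis, and at $y_-$ computes the sign as $-(-1)^3$: the factor $(-1)^3$ because the earlier parameter $\theta_-$ now lies on the stationary sheet, so the comparison in \eqref{eq:Dax-sign} requires permuting the family direction past three vectors, and the extra $-1$ because that sheet's tangent vector is reversed --- this is where the opposite sign of $\gamma\pitchfork C$ enters. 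Since the whole point of the lemma is that the two contributions add to $g+\ol{g}$ rather than cancel (the naive $6$-dimensional indeterminacy would identify $g$ with $-\ol{g}$), a proof must actually carry out this local analysis, either by reproducing the explicit picture of Figure~\ref{fig:g+g-inv} or by writing the local model at $p_1,p_2$ in coordinates and evaluating \eqref{eq:Dax-sign} there; as written, your argument records the expected answer but does not establish it.
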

\begin{proof}
    Recall that this action is by precomposing whiskers with loops, so the class $g\cdot[C]\in\pi_3M$ is represented by adding an embedded representative $\gamma\colon\S^1\hra M$ of $g$ to the whisker of $C$. Since our basepoint $x_-=u(-1)$ is in $\partial_0 M$, the circle $\gamma$ intersects $C$ transversely in two points of opposite sign as in Figure~\ref{fig:g+g-inv}.
    
    We now represent $g\cdot[C]$ by a 1-parameter family of arcs. For time $t\in[0,1/4]$ the arc $u$ (dashed in the figure) slides along $\gamma$ (take the midpoint of $u$ and drag it along $\gamma$) until the tip reaches $C$, then for $t\in[1/4,3/4]$ the arcs swing around $C$ (using our fixed foliation of $\S^2$ by a 1-parameter family of arcs), and finally for $t\in[1/4,3/4]$ return back to $u$ (run the family for $t\in[0,1/4]$ backwards). Since this family of arcs is obtained from the one for $[C]$ by conjugating all the arcs by $\gamma$, it represents $g\cdot[C]$.
    
\begin{figure}[!htbp]
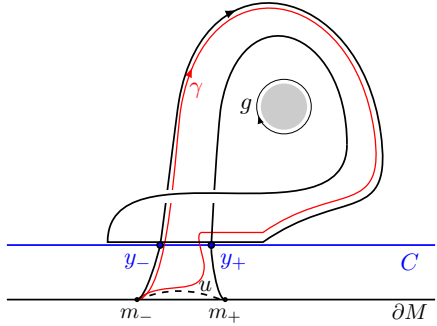

        \centering
        \includestandalone[mode=buildmissing,width=0.45\linewidth]{Figures-4dLBT/fig-g+g-inv}
        \caption{The immersed arc in the family for $g\cdot[C]\in\pi_3M$.}
        \label{fig:g+g-inv}
\end{figure}
    There is a single arc which is not immersed, namely the one depicted in Figure~\ref{fig:g+g-inv}, with two double points, $y_-$ and $y_+$. We claim that the signed group element at $y_+$ is $+g$. Namely, the Dax loop is obtained by following the black arc from $x_-=u(-1)$ to the first occurrence of $y_+$, and then back on the black arc from the second occurrence of $y_+$ to $x_-$, is clearly homotopic to $\gamma$. To see the sign, note that the first sheet moves towards the reader and points right at $y_+$, whereas the second sheet points downwards at $y_+$, forming a positive basis of the present $\R^3$. Similarly, at $y_-$ the double point loop is given by $g^{-1}$, and we claim the sign is $-(-1)^3$. Namely, $(-1)^3$ arises since here the earlier time $\theta_-$ occurs on the sheet which is not moving, while the additional minus sign arises because that sheet has the opposite tangent vector (unchanged for the other sheet).
\end{proof}

This lemma was also was proven in \cite{K-Dax}. More generally, in that paper the first author studied a general 4-manifold $X$ and another useful tool for computing $\md$ -- namely, the action of $\pi\coloneqq\pi_1X$ on $\pi_3X$, and showed that if $\lambda(g\cdot a,g)\in\Z[\pi\sm1]$ denotes the equivariant intersection number between the 3-sphere $g\cdot a$ and the loop $g$, and $\lambdabar$ is $\lambda$ minus the coefficient at $1$, then the following formula holds in $\Z[\pi\sm1]$:
\begin{equation}\label{eq:dax-g}
    \md(g\cdot a) = g\cdot\md(a)\cdot\ol{g} - \lambdabar(g\cdot a,g) + \ol{\lambdabar(g\cdot a,g)},
\end{equation}
Observe that $\pi$ acts on all groups in Theorem~\ref{thm:Hopf}: on the bottom three by the conjugation action on $\Z[\pi]$, and on homotopy groups of $M$ by changing whiskers. The maps $\mu_2, \mu_3$ are $\pi$-equivariant, $\mu_i(g\cdot a) = g\cdot \mu_i(a) \cdot \ol{g}$, but for $\md$ we have \eqref{eq:dax-g}.
If $a=b\circ H$ for $b\in\pi_2M$ then $\lambdabar(g\cdot a,g)=0$, since $a(\S^3)=b(\S^2)$ is ``2-dimensional'', so generically disjoint from any 1-dimensional loop $g$. Thus, \eqref{eq:dax-g} is consistent with the equivariance of $\mu_2$.

%%%%%%%%%%%%%%%%%%%%%%%%%%%%%%%%%%%%%%%%%%%%%
\section{The relative Dax invariant for disks in 4-manifolds}\label{sec:disks}

In this section we discuss relative Dax invariants for both neat disks and half-disks in an arbitrary smooth, oriented 4-manifold $X$. We will later restrict to two settings: neat disks in $X=M$ with a dual for the boundary knot as in Section~\ref{sec:intro}, and half-disks in $X=M_G$ as in Section~\ref{sec:space-level}.

The first order of business is to translate the Dax invariant for families of arcs to an invariant of disks. To this end, we fix a convenient parametrization of $\D^2$ by the rectangle $\I \times \D^1=[0,1]\times[-1,1] \to \D^2$. This collapses the upper face $\I \times 1$ to the point $i\in\D^2$, the lower face $\I \times -1$ to the point $-i\in\D^2$, and restricts to a diffeomorphism $\I \times (-1,1) \cong \D^2 \sm \{\pm i\},(t,\theta)\mapsto a_t(\theta)$. 

We thus think of $\D^2$ as foliated by the smooth family of arcs $a_t\colon\D^1\hra \D^2$ each going from $-i$ to $i$, for $t\in[0,1]$. Then $a_0$ is the left semicircle, $a_1$ is the right semicircle and $a_{1/2}$ is the vertical arc through $0$. We can actually choose $a_t$ to be the projection to $\D^2\subseteq\R^2$ of the upper great circles in $\S^2\subseteq\R^3$. 

Then an embedding $K\colon\D^2\hra X$ gives a smooth family $K\circ a_t\colon\D^1\hra X$ of embedded arcs.
We now add the following requirements on $K$ that define our two central notions, neat embeddings versus embeddings of half-disks.
\begin{itemize}
\item[\emph{all}\;] 
    For all embeddings $K\colon\D^2\hra X$ we require that $K\circ a_t$ are neat for $t\neq 0,1$ and that $K\circ a_0\colon\D^1\hra \partial X$ lies in the boundary. 
\item[\emph{neat}\;] 
    We call $K$ \emph{neat} if $K\circ a_1\colon\D^1\hra\partial X$ again lies in the boundary and forms a smooth knot $K|_{\partial\D^2}\colon\S^1\hra\partial X$ together with $K\circ a_0$. 
    % We write $\Emb(\D^2,X)$ for the space of neat embeddings. 
\item[\emph{half}\;]
    We call $K$ a \emph{half-disk} if $K\circ a_1\colon\D^1\hra X$ is a \emph{neat} arc (that typically makes a ``right angle'' with $K\circ a_0$), called the ``free'' boundary of $K$.
    % and we let $\Emb(\HD,X)$ be the resulting space of half-disks.
\end{itemize}
In the neat case, this is just a convenient reformulation of the notion used in Section~\ref{sec:intro}.
Note that for a half-disk $K$, the domain of definition is still $\D^2$ and our half-disk symbol $\HD$ is just a reminder that $K$ has a free boundary arc (and is hence not neat). 
We now fix boundary conditions in both cases. 
\begin{defn}\label{def:boundary conditions}
    Fix a knot $k\colon\S^1\hra \partial X$ and let $\Emb(\D^2,X;k)$ be the space of neat embeddings with boundary $k$. We can decompose $k = k\circ a_0\cup k\circ a_1$ into two arcs that have the same endpoints $x_\pm\in\partial X$.  
    
    In addition, fix a neat arc $u\colon\D^1\hra X$ with endpoints $x_\pm$ and denote by $\Emb(\HD,X;k)$ the space of half-disks with boundary $k= k\circ a_0 \cup u$. We will sometimes also use the notation $k_-\coloneqq k\circ a_0$ so that $k=k_-\cup u$.
\end{defn}
Note that $\Emb(\HD,X;k)$ has a weaker boundary condition compared to $\Emb(\HD,X;k^\e)$ from Section~\ref{sec:space-level}. We will switch between them in Section~\ref{sec:main-proofs}.

%In a future paper we will also discuss the case where $k_-$ is neat, but here we will use that it lies in $\partial X$.

%%%%%%%%%%%%%%%%
\subsection{Dax invariant for half-disks and neat disks}\label{sec:Dax-half}
A homotopy between half-disks $K_0,K_1\in \Emb(\HD,X;k)$ can be parametrized by
\[
   H\colon \I \times (\I \times \D^1) \ra X, \quad (s,t, \theta) \mapsto H_s(t,\theta),
\]
with $H_0=K_0$ and $H_1=K_1$, such that each $H_s$ has the same boundary condition $k$. Then $(s,t) \mapsto H_s(t,-)$ defines a map $\I^2 \to \Map_\partial(\D^1,X)$ to which we would like to apply the $\Da$ invariant from Section~\ref{sec:Dax-arcs}. 

% \begin{remark} \label{rem:cusps}
%     A generic homotopy between neat embeddings of a surface in a $4$-manifold has an algebraic zero number of cusps that can be canceled in pairs. This implies that the given homotopy is homotopic (rel. boundary) to a generic \emph{regular} homotopy. 
    
%     For half-disks, this count no longer holds, as witnessed by the following homotopy $H$: Start with any half-disk $K$, apply one interior twist and push-off the resulting double point across the free boundary arc.  By construction, $H$ has exactly one cusp in the middle of the interior twist.  Nevertheless, $H$ is homotopic (rel.\ boundary) to a regular homotopy, using a version of the ``boundary twist''. By Smale-Hirsch theory, it is not hard to see directly that homotopic half-disks are also regularly homotopic, hence the above assumption on $H$ is justified. However, this example tells us that we should not count double points with trivial double point loops, as they can be created by cusp homotopies.
% \end{remark}

In Definition~\ref{def:Dax} the boundary condition $u$ occurs on all but one boundary face of $\I^2$ (on which it lies in embeddings), while here we have for $s\in \{0,1\}$ embeddings $K_0, K_1$ and for $t\in\{0,1\}$ the constant arcs $k_-=k\circ a_0$ respectively $u$. The former setup was convenient for the relative homotopy group $\pi_2(\Imm_\partial(\D^1,X),\Emb_\partial(\D^1,X),u)$ because a group structure on it is given by gluing maps of squares along one constant face. One can translate between a homotopy $H$ of half-disks and a representative $F$ of a relative homotopy class as in diagram \eqref{eq:square-convention}.
 \begin{equation}\label{eq:square-convention}
 \begin{tikzcd}
 \mbox{} & \\
  \arrow{u}{s} \arrow[swap]{r}{t} & \mbox{}
   \end{tikzcd}
   \quad 
     \begin{tikzcd}
        k_-\arrow{r}{K_1}\arrow[equals]{d}{\hspace{0.65cm} H} & u \arrow[equals]{d} \\
      k_-\arrow{r}[swap]{K_0} & u 
    \end{tikzcd}
    \quad\cong\quad
    \begin{tikzcd}
        u\arrow[equals]{r}\arrow[equals]{d}{\hspace{0.65cm} F} & u\arrow[equals]{d}\\
       u \arrow{r}[swap]{K_1^{-1}\cdot K_0} & u
    \end{tikzcd}
    \quad
    \begin{tikzcd}
 \mbox{} & \\
  \arrow{u}{t_2} \arrow[swap]{r}{t_1} & \mbox{}
   \end{tikzcd}
 \end{equation}
 On the left, each point $(s,t)$ corresponds to the immersed arc $H_s(t,-)$, whereas on the right $(t_1,t_2)$ corresponds to the immersed arc $F(t_1,t_2,-)$, and double lines denote constant faces. 
 More precisely, there is a diffeomorphism $\varphi$ of $\I^2$ with which we precompose the map $\I^2\to \Imm_\partial(\D^1,X)$ given by $H$ to get the one given by $F$. So $\varphi$ is the identity slightly away from the boundary, whereas on the boundary it fixes only the lower part of the right face and otherwise arranges exactly for the transition from $H$ to $F$ as in \eqref{eq:square-convention}. Here we use that the composition $K_1^{-1}\cdot K_0$ of the paths of arcs $K_1^{-1}, K_0\colon\I\to \Emb_\partial(\D^1,X)$ takes a short pause at $k_-$ by definition of their concatenation. Moving from boundary to the inside of the square, the diffeomorphism of the boundary is isotoped back to the identity to obtain $\varphi$. 

\begin{defn}
    \label{def:Dax(H)}
    For a homotopy $H$ of half-disks define $\Dax(H)\in \RGR$, with $\pi=\pi_1X$, as the sum over double points of a generic representative of
    % the associated map
    \[
        \wt H\colon \I^2 \times \D^1 \to \I^2 \times X, \quad (s,t,\theta)\mapsto (s,t,H_s(t,\theta)).
    \]
    These double points occur at finitely many values of $(s,t)$, for which the arc $H_s(t)$ has a double point, with a well-defined signed fundamental group element. As before, we only count the nontrivial group elements.
\end{defn}
The following observation holds by comparing $\wt H$ and $\wt F$ via $\varphi$ as above.
\begin{lemma}\label{lem:Dax=Da} 
    Via the transition between $H$ and $F$ from \eqref{eq:square-convention}, the $\Dax$ invariant for homotopies of half-disks $\Dax(H)$ is equal to the $\Da$ invariant $\Da(F)$ for $[F]\in \pi_2(\Map_\partial(\D^1,X),\Emb_\partial(\D^1,X),u)$ from Section~\ref{sec:Dax-arcs}.
\end{lemma}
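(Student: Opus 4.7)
My plan is to exploit the relation $\wt F = \wt H \circ (\varphi \times \mathrm{id}_{\D^1})$ (up to an orientation-preserving self-diffeomorphism on the target $\I^2 \times X$), where $\varphi \colon \I^2 \to \I^2$ is the orientation-preserving diffeomorphism constructed between \eqref{eq:square-convention} and the lemma, and to argue that the signed fundamental-group count is invariant under such a reparametrization of the parameter square.

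First I would arrange by general position that the generic perturbation of $\wt H$ has all its double points inside $U \times \mathrm{int}(\D^1)$, where $U \subseteq \mathrm{int}(\I^2)$ is the open subset on which $\varphi$ is the identity; this is possible because by construction $\varphi$ differs from the identity only on a small collar of $\partial \I^2$. The generic perturbation of $\wt F$ induced by precomposition then has its double points at the same parameter values, giving a canonical bijection between the two sets of double points.

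Next I would verify that each pair of corresponding double points contributes the same signed group element. At a double point lying over $(s,t) \in U$, the immersed arc $H_s(t,\cdot)$ equals $F(s,t,\cdot)$ as a map $\D^1 \to X$, and the basepoint $x_- = u(-1) = k_-(-1)$ is common to both conventions, so the loop prescribed by \eqref{eq:Dax-dp-loop} is literally the same element of $\pi_1(X,x_-)$. The sign \eqref{eq:Dax-sign} reads off orientations of $T(\I^2 \times X)$ and of images of tangent spaces of $\I^2 \times \D^1$ under the generic map; since $\varphi = \mathrm{id}$ at $(s,t)$, these tangent data agree on the nose. Both invariants discard the coefficient at $1 \in \pi$, so summing over the bijection gives $\Dax(H) = \Da(F)$.

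I do not expect a serious obstacle. The one point of care is simply ensuring that $\varphi$ really can be chosen to be the identity away from a collar of $\partial \I^2$, but this is immediate from its construction: the nontrivial behaviour of $\varphi$ on $\partial \I^2$ rearranges the boundary faces from the $(K_0, k_-, K_1, u)$-pattern of $H$ to the $(u, K_1^{-1} \cdot K_0, u, u)$-pattern of $F$, using the $k_-$-pause built into the concatenation, and this boundary diffeomorphism is then isotoped back to the identity through a collar, as sketched in the text. The lemma is essentially the statement that the double-point count is intrinsic to the family modulo such a reparametrization of the parameter square.
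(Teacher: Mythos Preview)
Your proposal is correct and follows exactly the approach the paper indicates: the paper's proof is the single sentence ``The following observation holds by comparing $\wt H$ and $\wt F$ via $\varphi$ as above,'' and you have simply spelled out this comparison in detail, using that $\varphi$ is the identity away from a collar of $\partial\I^2$ to identify the double points, their loops, and their signs.
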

We can now prove one part of Lemma~\ref{lem:Dax-invt}, namely that $\Dax(H)$ is independent of the choice of a homotopy $H$ between half-disks $K_0,K_1$, modulo $\md(\pi_3M)$ as in Definition~\ref{def:dax}. In the next section we will show the remaining part, specific for \emph{neat disks}, see Remark~\ref{rem:Dax-neat}.

For homotopies $H$ from $K_0$ to $K_1$ and $H'$ from $K_1$ to $K_2$, let $H \cup_{K_1} H'$ be their concatenation along the top face $K_1$, as on the left of~\eqref{eq:squares-diagram}. This is a homotopy with $\Dax(H \cup_{K_1} H')= \Dax(H) + \Dax(H')$, since each point in the glued rectangle lies in exactly one of the two squares, so corresponds to an immersed arc either in $H$ or $H'$. This is analogous to the additivity of $\Da$, where two squares were instead glued along a vertical $u$ face.
\begin{equation}\label{eq:squares-diagram}
\begin{tikzcd}
    k_-\arrow{r}[font = \scriptsize]{K_2}\arrow[equals]{d}{\hspace{0.65cm} H'} & u \arrow[equals]{d}\\
    k_-\arrow{r}[swap,font = \scriptsize]{K_1}\arrow[equals]{d}{\hspace{0.65cm} H} & u\arrow[equals]{d} \\
    k_-\arrow{r}[swap,font = \scriptsize]{K_0} & u
\end{tikzcd}\hspace{2cm}
\begin{tikzcd}
    k_-\arrow{r}[font = \scriptsize]{K_0}\arrow[equals]{d}{\hspace{0.57cm} -H''} & u \arrow[equals]{d}\\
    k_-\arrow{r}[swap,font = \scriptsize]{K_1}\arrow[equals]{d}{\hspace{0.65cm} H} & u\arrow[equals]{d}\\
    k_-\arrow{r}[swap,font = \scriptsize]{K_0} & u
\end{tikzcd}\quad\sim\quad
\begin{tikzcd} %looks weird but consistent with \eqref{eq:square-convention}
    u\arrow[equals]{r}\arrow[equals]{d}{\hspace{0.65cm} H} & u\arrow[equals]{d}\\
    u\arrow{r}[swap,font = \scriptsize]{K_1^{-1}\cdot K_0}\arrow[equals]{d}{\hspace{0.55cm} -H''} & u\arrow[equals]{d}\\
    u \arrow[equals]{r} & u
\end{tikzcd}
\end{equation}
\begin{lemma} \label{lem:HDax}
    If $H,H''$ are two homotopies from $K_0$ to $K_1$, then we have $\Dax(H) - \Dax(H'')\in \md(\pi_3X)$. In particular, the \emph{relative} $\Dax$ invariant $\Dax(K_0,K_1)\coloneqq [\Dax(H)]\in \RGR/\md(\pi_3X)$ is a well-defined isotopy invariant of half-disks. By definition, it satisfies $\Dax(K_1,K_0)=-\Dax(K_0,K_1)$
\end{lemma}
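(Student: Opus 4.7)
The plan is to form the self-homotopy $\widetilde{H}\coloneqq H\cup_{K_1}(-H'')$ of $K_0$, obtained by running $H$ from $K_0$ to $K_1$ and then $H''$ in reverse from $K_1$ back to $K_0$. By the additivity of $\Dax$ under vertical stacking along a common top face (observed in the paragraph before the lemma, and pictured in the middle of~\eqref{eq:squares-diagram}) and the obvious sign change under reversal of a homotopy, one has
\[
    \Dax(\widetilde{H})=\Dax(H)-\Dax(H'')\in\RGR.
\]
It therefore suffices to show that the $\Dax$ invariant of any self-homotopy of a half-disk lies in $\md(\pi_3X)$.

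Next, I would apply the translation \eqref{eq:square-convention} together with Lemma~\ref{lem:Dax=Da} to represent $\widetilde{H}$ by an element $[\widetilde{F}]\in\pi_2(\Map_\partial(\D^1,X),\Emb_\partial(\D^1,X),u)$ with $\Dax(\widetilde{H})=\Da([\widetilde{F}])$ in $\RGR$ (as in the third square of~\eqref{eq:square-convention}, the bottom edge of $\widetilde{F}$ realises the loop $(K_1^{-1}\cdot K_0)\cdot(K_0^{-1}\cdot K_1)$ in $\Emb_\partial(\D^1,X)$). This boundary loop is null-homotopic in $\Emb_\partial(\D^1,X)$, being a loop times its reverse. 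By exactness of the lower row of~\eqref{eq:Map-seq}, $[\widetilde{F}]$ lies in the image of $\delta_{\Map}$, so $[\widetilde{F}]=\delta_{\Map}(p_u^{-1}(a))$ for some $a\in\pi_3X$.

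Invoking Definition~\ref{def:dax} together with the commutativity of the bottom portion of~\eqref{eq:Map-seq} then gives
\[
    \Dax(H)-\Dax(H'')\;=\;\Dax(\widetilde{H})\;=\;\Da([\widetilde{F}])\;=\;\md(a)\;\in\;\md(\pi_3X),
\]
which is the first assertion. Well-definedness of $\Dax(K_0,K_1)\in\RGR/\md(\pi_3X)$ follows at once. For isotopy invariance, note that if $K_0\sim K_0'$ via an ambient isotopy $I_0$, then $\Dax(I_0)=0$ since the map $\widetilde{I_0}$ has no double points (each time slice is an embedding), so $\Dax(I_0^{-1}\cup H)=\Dax(H)$ by additivity, and similarly for the $K_1$ side. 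Antisymmetry $\Dax(K_1,K_0)=-\Dax(K_0,K_1)$ is immediate because reversing a homotopy flips all double-point signs while preserving group elements.

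The main obstacle I anticipate is the bookkeeping around the translation~\eqref{eq:square-convention} and Lemma~\ref{lem:Dax=Da}: one must verify that stacking homotopies of half-disks along the $K_1$ face corresponds, after the diffeomorphism $\varphi$, to the usual concatenation in the relative group $\pi_2(\Map_\partial(\D^1,X),\Emb_\partial(\D^1,X),u)$ along the bottom face, and that the null-homotopy of the boundary loop in $\Emb_\partial(\D^1,X)$ (rather than in $\Map_\partial(\D^1,X)$) really allows the lift through $\delta_{\Map}$. Both points are essentially formal, but need careful handling; the rest is a direct application of the machinery already set up in Section~\ref{sec:Dax-arcs}.
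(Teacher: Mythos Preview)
Your proof is correct and follows essentially the same approach as the paper. The only difference is a matter of packaging: the paper applies the transformation~\eqref{eq:square-convention} to each of the two stacked squares $H$ and $-H''$ separately (as shown on the right of~\eqref{eq:squares-diagram}), so that after gluing along the common edge $K_1^{-1}\cdot K_0$ the resulting family is literally equal to $u$ on all of $\partial\I^2$, which is the defining condition for $\md(\pi_3X)$; you instead transform the whole $\widetilde{H}$ at once and then invoke the exact sequence~\eqref{eq:Map-seq} to reach the same conclusion. Both are valid, and the paper's route is just marginally more direct since it sidesteps the need to argue that the boundary loop is null-homotopic in $\Emb_\partial(\D^1,X)$.
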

\begin{proof}
    We can view $-H''$ as running backwards from $K_1$ to $K_2=K_0$, so precisely one time direction is reversed, cf.\ \eqref{eq:square-convention}. This implies $\Dax(-H'')=-\Dax(H'')$. Then by the last paragraph we have $\Dax(H) - \Dax(H'')=$
    \[
     = \Dax(H)+\Dax(-H'')=\Dax(H \cup_{K_1} -H'')=\Dax(H\cup_{K_1^{-1}\cdot K_0}-H'').
    \]
    The last equality follows by \eqref{eq:square-convention}, see the right hand side of~\eqref{eq:squares-diagram}. The final Dax invariant lies in $\md(\pi_3X)$ by definition, since this family is now equal to $u$ along all of the boundary $\partial \I^2$. Finally, if $H$ is an isotopy we clearly have $\Dax(H)=0$, so this is an obstruction to isotopy.
\end{proof}

\begin{remark}\label{rem:Dax-neat} 
    The entire discussion in this section can be repeated word by word for neat disks with a boundary condition $k\colon\S^1\hra\partial X$. So a homotopy $H$ (rel.\ boundary) has an invariant $\Dax(H)\in \RGR$, that modulo $\md(\pi_3X)$ depends only on the endpoint neat disks, giving $\Dax(K_0,K_1)$. In fact, in the case of neat disks the image of $\Dax$ lies in the subgroup $\RGR^\sigma<\RGR$, as we will prove in Corollary~\ref{cor:Dax-FQ} below, completing the proof of Lemma~\ref{lem:Dax-invt}.
\end{remark}

%%%%%%%%%%%%%%%%%%%%%%%%%%%%%%%%%%
\subsection{Dax invariant and Wall's self-intersection invariant}\label{subsec:Dax-FQ}

We would like to relate the invariant $\Dax(H)$ from the previous section to Wall's self-intersection invariant $\mu_3(B)$ for a generic map $B\colon\D^3\imra P$, where $P$ is any smooth 6-manifold and we assume that $B$ takes a basepoint in $\S^2$ to the basepoint in $P$. Being generic means that the only singularities of $B$ are isolated transverse double points in the interior of the 3-ball. In particular, the restriction $B|_{\partial\D^3}$ is an embedding that does not meet the interior of $B$.
\begin{defn}\label{def:mu_3}
 The self-intersection invariant $\mu_3(B)\in \Z[\pi_1P]/\langle 1, g+\ol{g}\rangle$ is  given by the sum of signed group elements $g_p\coloneqq B(w_a)B(w_b)^{-1}$ over all double points $y=B(a)=B(b)$ of $B$, where $w_a,w_b\colon\I\to\D^3$ are arbitrary whiskers from a basepoint in $\partial\D^3$ to $a,b\in\D^3$. There is no preferred order of the sheets $a,b$ so we have to mod out $g+\ol{g}$ to remove this ambiguity.
\end{defn}
Then $\mu_3(B)$ is invariant under homotopies $B_s$ of the 3-ball which satisfy $B_s(\partial\D^3)\cap B_s(\D^3\sm\partial\D^3)=\emptyset$ for all $s\in\I$. This condition avoids moving a double point off across the boundary, which would change $\mu_3(B)$.
\begin{remark}
    \label{rem:generic}
    To show invariance of $\mu_3(B)$ under such homotopies, one perturbs them to become generic which reduces to the following kind of homotopies (that in particular restrict to isotopies on $\partial\D^3$): 
\begin{itemize}
\item 
    1-parameter families of generic maps -- these are given by pre- and post-composition with ambient isotopies,
\item 
    1-parameter families of maps that are either generic or have interior self-tangencies of codimension 1 -- these are given by finger moves and Whitney moves in the interior of $B$.
\item 
    cusp homotopies that introduce a single interior self-intersection with trivial group element.
\end{itemize}
\end{remark}
 Note  that $\mu_3$ is in particular preserved by homotopies through families of neat maps $B_s\colon(\D^3,\S^2) \to (P,\partial P),\ s\in\I$, that restrict to an isotopy of embeddings on the boundary. 
 
\begin{lemma}\label{lem:Dax-mu}
    For a generic representative $F\colon\I^2\to \Map_\partial(\D^1,X)$ of a class in $\pi_2(\Map,\Emb)$ as in Section~\ref{sec:Dax-arcs}, the induced map $\wt F\colon\I^2 \times \D^1 \imra \I^2 \times X$ is generic in the sense above Definition~\ref{def:mu_3} and $\mu_3(\wt F)=q(\Da(F))$, where 
    \[
    q\colon\RGR\sra \RGR/ \langle g+\bar g \rangle = \Z[\pi]/\langle 1, g+\ol{g}\rangle
    \]
    is the projection and $\pi=\pi_1X$.
    
    As a consequence, we have $\mu_3(\wt H) = q(\Dax(H))$ for a homotopy $H$ (rel.\ boundary) between neat or half-disks as in the previous section, and $q\circ \md$ is equal to $\mu_3$ applied to (homotopy classes of) maps $\S^3\to X$.
\end{lemma}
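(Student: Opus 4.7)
The plan is to unpack the definition of $\mu_3$ for the specific 3-ball $\wt F\colon \I^2 \times \D^1 \imra \I^2 \times X$ and observe that its double points are in canonical bijection with those counted by $\Da(F)$. First I would verify genericity: since $F$ is generic, the arcs $F(\vec t\,)$ are embedded except at finitely many parameter values, each producing one transverse double point, so $\wt F$ has only isolated transverse double points in the interior of $\I^2\times\D^1$. The boundary $\partial(\I^2 \times \D^1) = \partial \I^2 \times \D^1 \,\cup\, \I^2 \times \partial\D^1$ restricts $\wt F$ to either a family of embedded arcs (where $F$ lies in $\Emb_\partial$) or the constant maps to $x_\pm$, both of which are embeddings, and by a generic perturbation we may assume the boundary is disjoint from the interior double points. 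Hence $\wt F$ qualifies as a generic $B\colon \D^3 \imra P$ in the sense of Definition~\ref{def:mu_3}.

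Next I would match the invariants double-point by double-point. Each self-intersection $y = F(\vec t\,)(\theta^-) = F(\vec t\,)(\theta^+)$ gives a double point $(\vec t, y)$ of $\wt F$. For the group element in $\mu_3$, I would choose whiskers from the basepoint $(0,0,-1)\in \partial(\I^2 \times \D^1)$ that first run through $\I^2 \times \{-1\}$ (which $\wt F$ sends constantly to $x_-$) and then up the arc fiber over $\vec t$ to $(\vec t, \theta^\pm)$. Applying $\wt F$ and projecting to $X$ gives exactly $F(\vec t\,)|_{[-1,\theta^\pm]}$, so $\wt F(w_a) \wt F(w_b)^{-1}$ is precisely the Dax loop~\eqref{eq:Dax-dp-loop}. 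For the sign, both invariants are determined by comparing the tangent sum $d\wt F|_a \oplus d\wt F|_b$ to the ambient orientation of $T_{(\vec t,y)}(\I^2 \times X)$; the ambiguity in $\mu_3$ corresponds to swapping the two $3$-dimensional summands, which changes the sign by $(-1)^{3\cdot 3} = -1$ and replaces $g_y$ by $g_y^{-1}$. This is precisely the relation $g+\bar g = 0$ modded out in the target of $\mu_3$, so the Dax contribution $\e_{(\vec t,y)}\, g_y$ maps to the $\mu_3$ contribution under $q$. Both counts also ignore the trivial group element (Dax by construction, $\mu_3$ by modding out $1$), completing the equality $\mu_3(\wt F) = q(\Da(F))$.

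For the consequences, the diffeomorphism $\varphi$ of $\I^2$ relating the $H$-parameterization and $F$-parameterization of~\eqref{eq:square-convention} is orientation-preserving and is an ambient isotopy on the boundary, so $\wt H$ and $\wt F$ differ by a reparameterization and have identical double point data; combined with Lemma~\ref{lem:Dax=Da}, this yields $\mu_3(\wt H) = q(\Dax(H))$. Finally, for $a \in \pi_3 X$ represented by $S_a\colon \I^2 \to \Map_\partial(\D^1,X)$ with $\partial \I^2 \mapsto u$, the map $\wt{S_a}\colon \I^2 \times \D^1 \to \I^2 \times X$ is a generic 3-ball representative whose underlying homotopy class after collapsing the boundary (where $\wt{S_a}$ embeds into the standard $(\vec t, u(\theta))$) is precisely $a$ stabilized into the $6$-manifold, so $\mu_3(\wt{S_a}) = \mu_3(a)$. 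Applying the main equation then gives $q(\md(a)) = \mu_3(a)$. The main obstacle I expect is the sign bookkeeping in step two and checking that the stabilization in the last step genuinely computes $\mu_3$ on $\pi_3 X$; both should reduce to the orientation conventions already fixed in Definition~\ref{def:Dax} and to the fact from \cite[Lem.4.2]{ST-LBT} that $\mu_3$ factors through Hurewicz and so is insensitive to this trivial $\I^2$-stabilization.
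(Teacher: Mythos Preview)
Your proposal is correct and follows essentially the same approach as the paper's own proof: both argue that $\mu_3(\wt F)$ and $\Da(F)$ count the same double points, that the Dax loop~\eqref{eq:Dax-dp-loop} is a particular choice of whiskers for $\mu_3$, and that the sign conventions~\eqref{eq:Dax-sign} agree, with the sheet-order ambiguity in $\mu_3$ being precisely the relation $g+\bar g$ killed by $q$. Your treatment is in fact more explicit than the paper's --- you spell out the genericity check, the whisker construction through $\I^2\times\{-1\}$, the $(-1)^{3\cdot 3}$ sign change under sheet swap, and the two stated consequences, all of which the paper leaves implicit or handles in a single sentence.
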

\begin{proof}
    Both invariants, $\mu_3(\wt F)$ and $\Da(F)$ count double points of $\wt F$ with groups elements and signs. The only difference is the particular choice of sheets in the definition of $\Da(F)$, where the order of inverse images in $\D^1$ is used to distinguish a double point loop $g$ from $- \bar g$. To prove the property $\mu_3(\wt F)=q(\Da(F))$ it thus suffices to compare the definitions of double points and signs on both sides.
      
    For $\Da(F)$ the fundamental group element $g_y$ at a double point $y=(\vec{t},F(a))=(\vec{t},F(b))\in\I^2 \times X$ with $\vec{t}=(t_1,t_2)$ and $a=(\vec{t},\theta_-)$, $b=(\vec{t},\theta_+)$ is defined as the concatenation $g_y=w_aw_b^{-1}$ for $w_a=F(\vec{t},-)|_{[-1,\theta_-]}$ and $w_b=F(\vec{t},-)|_{[-1,\theta_+]}$, see~\eqref{eq:Dax-dp-loop}. 
       
    Observe that these two arcs are a particular choice for whiskers used for computing the double point loop for $\mu_3$. Moreover, in both cases the signs are computed as in \eqref{eq:Dax-sign}. 
\end{proof}

%%%%%%%%%%%%%%%%%%%%%%%%%%%%%%%%%%
\subsection{The Freedman--Quinn invariant for neat disks} \label{sec:FQ}
 
We now turn to the definition of the Freedman--Quinn invariant that uses a less clever way to turn a homotopy $H$ between disks into a generic map $\wh{H}$ of a 3-manifold into a 6-manifold. The main advantage of this second method is that it agrees with the above discussion in the neat case, and can be used to show that $\Dax$ gives elements in the fixed point set $\RGR^\sigma$, using the following result.
\begin{lemma} \label{lem:mu_3}
    If $P^6=\I \times Q^5$ and $B\colon \D^3\imra P$ has $B(\S^2) \subseteq \{\frac{1}{2}\} \times \partial Q \subseteq \partial P$, then $\ol{\mu_3(B)} = \mu_3(B)$. 
\end{lemma}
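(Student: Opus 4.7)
The plan is to exploit the $\I$-reflection
\[
\tau\colon P\to P,\qquad (t,q)\mapsto (1-t,q),
\]
which is a diffeomorphism of $P$ that fixes the middle slice $\{\tfrac12\}\times Q$ pointwise and therefore preserves the boundary condition $B(\S^2)\subseteq\{\tfrac12\}\times\partial Q$. The two key steps are: first, to show $\mu_3(\tau\circ B)=-\mu_3(B)$; second, to show $\mu_3(\tau\circ B)=\mu_3(B)$ via a boundary-fixing homotopy. Combined, these give $2\mu_3(B)=0$, and since the relation $g+\ol g=0$ in the target $\Z[\pi_1 P]/\langle 1,g+\ol g\rangle$ means the involution $g\mapsto\ol g$ acts as negation, this is exactly the assertion $\ol{\mu_3(B)}=\mu_3(B)$.

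For the first equality, the double points of $\tau\circ B$ are in bijection with those of $B$ via $y\leftrightarrow\tau(y)$ with the same preimage pair $(a,b)\in\D^3\times\D^3$. The linear homotopy $\tau_r(t,q)=((1-r)(1-t)+rt,\,q)$ from $\tau$ to $\mathrm{id}_P$ fixes the basepoint $B(x_0)\in\{\tfrac12\}\times\partial Q$ throughout, so $\tau_\ast=\mathrm{id}$ on $\pi_1(P,B(x_0))$; hence the Dax-type group element at $\tau(y)$ for $\tau B$ is $\tau_\ast(g_y)=g_y$. On the other hand, $\tau$ is orientation-reversing on $P$, so applying $d\tau$ to the splitting \eqref{eq:Dax-sign} flips the sign of the comparison with $\omega_P$. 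Summing double point contributions gives $\mu_3(\tau B)=-\mu_3(B)$ in $\Z[\pi_1P]/\langle 1,g+\ol g\rangle$.

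For the second equality, write $B=(h,p)$ with $h\colon\D^3\to\I$, $p\colon\D^3\to Q$, and consider the straight-line homotopy
\[
B_s(x)\coloneqq\bigl((1-s)h(x)+s(1-h(x)),\,p(x)\bigr),
\]
so $B_0=B$, $B_1=\tau B$, and $B_s|_{\S^2}\equiv(\tfrac12,p|_{\S^2})$ is constant. The equation $B_s(a)=B_s(b)$ for $a\neq b$ reduces to $p(a)=p(b)$ together with $(1-2s)(h(a)-h(b))=0$, so for $s\neq\tfrac12$ the double-point pairs of $B_s$ (as a subset of $\D^3\times\D^3$) coincide with those of $B$; only at $s=\tfrac12$ does the family degenerate, in that the constraint on the $\I$-coordinate becomes vacuous. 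Crucially, no double point pair can migrate to the boundary: if $a\in\S^2$ then $p(a)\in\partial Q$, and neatness of $B$ (via $\pi_Q$) forces any $b$ with $p(b)=p(a)$ to lie in $\S^2$ too, whence $a=b$ since $B|_{\S^2}$ is an embedding. Perturbing $B_s$ to a generic one-parameter family through the moves listed in Remark~\ref{rem:generic}, all performed in the interior of $\D^3$, therefore preserves $\mu_3$ modulo $\langle 1,g+\ol g\rangle$, yielding $\mu_3(B)=\mu_3(\tau B)$.

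The only real subtlety is the passage at $s=\tfrac12$ in the second step: the straight-line homotopy is non-generic there (its image collapses into the 5-dimensional middle slice), so one must perturb and invoke Remark~\ref{rem:generic} for the finger/Whitney/cusp moves arising from the perturbation. Once that is granted, the conclusion $\ol{\mu_3(B)}=\mu_3(B)$ follows by combining the two equalities.
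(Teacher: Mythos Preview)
Your argument is correct and takes a genuinely different route from the paper's. The paper invokes the standard identity $\mu_3(B)-\ol{\mu_3(B)}=\lambda_P(B,B')$ for a push-off $B'$, then uses homotopy invariance of $\lambda_P$ to replace $B$ by a map $\beta$ landing entirely in the slice $\{\tfrac12\}\times Q$; there the push-off $\beta'$ can be taken at a different $\I$-level, so $\lambda_P(\beta,\beta')=0$ and the result follows. Your approach instead exploits the reflection symmetry directly: you compare $\mu_3(\tau B)$ to $\mu_3(B)$ in two ways, once via the orientation reversal of $\tau$ and once via a homotopy rel boundary, and combine to get $2\mu_3(B)=0$, which in the target $\Z[\pi_1P]/\langle 1,g+\ol g\rangle$ is exactly $\ol{\mu_3(B)}=\mu_3(B)$.

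What each approach buys: the paper's proof is shorter and avoids the degeneracy at $s=\tfrac12$ entirely by working with the intersection form $\lambda_P$ rather than $\mu_3$ itself; on the other hand it imports the formula $\mu_3-\ol{\mu_3}=\lambda_P(B,B')$ as a black box. Your proof is more self-contained and makes the role of the product structure $P=\I\times Q$ transparent via the explicit symmetry $\tau$, at the cost of the perturbation step at $s=\tfrac12$. Two small remarks on your write-up: your verification that no double point migrates to the boundary really uses that $B^{-1}(\partial P)=\S^2$ (so that $p(b)\notin\partial Q$ for interior $b$); this neatness is implicit in the context but worth stating. And the degeneracy at $s=\tfrac12$ is more severe than ``non-generic'' suggests (the map $B_{1/2}$ need not even be an immersion), but since you have already checked the boundary/interior separation for all $s$, the appeal to Remark~\ref{rem:generic} is legitimate: any such homotopy can be perturbed rel boundary to a generic one consisting of the listed moves, each preserving $\mu_3$.
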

\begin{proof}
    This is a consequence of the formula $\mu_3(B)-\ol{\mu_3(B)} = \lambda_P(B,B')$ for a 3-ball in any 6-manifold $P$ and $B'$ any push-off of $B$. Under our assumption, we can find an extension $\beta\colon \D^3 \imra \frac{1}{2} \times Q$ of $\partial B\colon\S^2\hra \frac{1}{2} \times \partial Q$ that is homotopic (rel.\ boundary) to $B$. 
    Therefore, we only need to prove the property for $\beta$ in place of $B$. However, we can arrange that $\beta$ and its push-off $\beta'$ have distinct $\I$-coordinates and hence are disjoint, so their intersection number $\lambda_P(\beta,\beta')=\lambda_P(B,B')$ vanishes.
\end{proof}
Consider a homotopy $H\colon\I \times \D^2 \to X$ between $K_i\colon\D^2\hra X$. As in the previous section we use the coordinates $\I \times \D^1\sra\D^2$, but now study the map
\[
    \wh{H}\colon \I \times (\I \times \D^1) \to \I^2 \times X,\quad
    (s,t,\theta)\mapsto (\frac{1}{2},s,H_s(t,\theta))
\]
where the pair $(\I,\frac{1}{2})$ plays the same exact role as $(\R,0)$ in \cite{ST-LBT}.
One easily checks that $\wh{H}$ satisfies the assumption of Lemma~\ref{lem:mu_3} if and only if $K_i$ are neat embeddings -- if they are half-disks then the boundary points $(s,1,\theta)$ map to $(\frac{1}{2},s,H_s(1,\theta)) = (\frac{1}{2}, s, u(\theta))$ that do not all lie in $\{\frac{1}{2}\} \times \partial(\I \times X)$, because $u\colon\D^1\hra X$ is a neat arc. 

However, for neat disks $K_i$ we deduce from Lemma~\ref{lem:mu_3} that the element $\mu_3(\wh{H})$ is fixed under the involution. Therefore, we get the following straightforward generalization to neat disks of the Freedman--Quinn invariant for spheres from \cite{ST-LBT}. 
\begin{defn} \label{def:FQ}
    The relative Freedman--Quinn invariant of homotopic neat disks $K_i\in\Emb(\D^2,X;k)$ is defined by
    \[
        \FQ(K_0,K_1)\coloneqq [\mu_3(\wh{H})]\;\in\;\faktor{\Z[\pi]^\sigma}{\langle 1, g+\ol{g},\mu_3(\pi_3X)\rangle}\;\cong\;\faktor{\FF_2[T_X]}{\mu_3(\pi_3X)}.
    \]
\end{defn}
Recall that $\pi\coloneqq\pi_1X$, and $\Z[\pi]^\sigma$ denotes the fixed point set of $\sigma(g)=\ol{g}$. The isomorphism used in the definition comes from $\FF_2[T_X] \cong \Z[\pi]^\sigma/\langle 1, g+\ol{g}\rangle$, induced by the inclusion $T_X\subseteq\pi$ of the set of nontrivial 2-torsion elements. 
% We get the following consequence of  and.
\begin{cor}
\label{cor:Dax-FQ}
    For neat disks $K_i$, $\FQ(K_0,K_1)=q(\Dax(K_0,K_1))$ and 
$\Dax(K_0,K_1)$ takes values in the fixed set of the involution $\sigma$. In particular, for any 4-manifold $X$ we have a homomorphism $\md\colon\pi_3X\to\RGR^\sigma$.
\end{cor}
\begin{proof}
    By Lemma~\ref{lem:Dax-mu} we have $\mu_3(\wt{H})=q(\Dax(H))$, so it suffices to show that $\mu_3(\wh{H})=\mu_3(\wt{H})$. Recall that $\wt{H}(s,t,\theta)=(s,t, H_s(t,\theta))$ was used to define $\Dax(H)$, whereas $\wh{H}$ is used to define $\FQ$. Both $\wt{H}$ and $\wh{H}$ are immersions $(\D^3, \S^2) \imra (P,\partial P)$ where $\D^3 \cong \I^2 \times \D^1$ and $P=\I^2 \times X$, so it suffices to find a homotopy between them that restricts to an isotopy on the boundary.
    Observe that we can certainly place the constant $\frac{1}{2}$ also in the second component of $\wh{H}$ without changing $\mu_3$. Then we get the required homotopy
\[
    \I \times \I^2 \times \D^1 \ni (r,s,t,\theta)\mapsto (s,r/2 +t(1-r),H_s(t,\theta)) \in\I^2 \times X
\]
    and one easily checks that for each $r\in\I$ it to an embedding on the boundary.
        
    Finally, as $\ker(q)$ by definition consists of elements fixed under the involution $\sigma$, and $q(\Dax(H))=\mu_3(\wh{H})$ is fixed by Lemma~\ref{lem:mu_3}, so is $\Dax(H)$.
\end{proof}

%%%%%%%%%%%%%%%%%%%%%%%%%%%%%%%%%%
\subsection{Deducing previous results for spheres}\label{sec:spheres-proofs}

There is one assumption under which our results are equivalent to previous results for spheres. Namely assume that $\partial M$ has a connected component $\partial_0M$ diffeomorphic to $\S^1 \times \S^2$, and $k$ and $G$ are duals in $\partial_0M$ corresponding to $\S^1 \times p$ and $q \times \S^2$.

Given a neat disk $\U\colon\D^2\hra M$ with boundary $k$, the union of the ambient 2-handle $\nu\U$ and a collar in $M$ of $\partial_0M$ leads to a connected sum decomposition $M\cong (\D^2 \times \S^2) \# M'$ along the separating sphere $C\colon\S^3\hra M$. We can now apply Lemma~\ref{lem:g+g-inv} which is precisely about this setting: it says that $\md(g\cdot C)=g+\ol{g} \in \Z[\pi \sm 1]$ for every $g\in \pi\sm1$. 
\begin{cor}\label{cor:S1xS2}
    In this setting we have
   \[
        \faktor{\Z[\pi]^\sigma}{\langle 1,\md(\pi_3M) \rangle} = \faktor{\Z[\pi]^\sigma}{\langle 1, g+\ol{g}, \md(\pi_3M) \rangle} 
        \cong \faktor{\mathbb{F}_2 T_M}{\mu_3(\pi_3M)} 
    \] 
    So in this case the relative Dax and Freedman--Quinn invariants for neat disks take values in the same group.
\end{cor}
The last equality was explained in Definition~\ref{def:FQ} of the Freedman--Quinn invariant $\FQ$: recall that its target is precisely this quotient of the $\FF_2$-vector space generated by $T_M\coloneqq\{g\in\pi\sm1 \mid g^2=1\}$, by the image of Wall's (reduced) self-intersection invariant $\mu_3\colon\pi_3M\to\Z[\pi]/\langle1,g+\ol{g}\rangle$.
% \begin{proof}
%     Recall that we write $\md$ for $\md_u$ where $u$ is an arc homotopic into the boundary.nder this diffeomorphism $\U$ is taken to $\D^2 \times pt$ and $G$ to $0 \times \S^2$. 
%     % The inclusion $X\subseteq M$ preserves Dax invariants and induces an isomorphism of fundamental groups. 
%     Therefore,
%     the computation of $\md(g\cdot S)$ in Lemma~\ref{lem:g+g-inv} applies.
% \end{proof}

% \begin{prop}\label{prop:disk-sphere}
%   Adding the meridian disk $\mu_G$ of $G$ gives a continuous map 
%     \[
%       \Emb_{\partial^\e}(\D^2,M) \to\SG, \quad K\mapsto K\cup_u \mu_G 
%     \]
% that  induces a bijection on connected components. 
% \end{prop}

\begin{proof}[Proof of Proposition~\ref{prop:disks-spheres}]
    We will show that all vertical maps are bijections and that the first square with $\Dax$ and $\FQ$ maps commutes, while the rest of the diagram commutes by construction. 
    
    Observe that $N$ is obtained from $M\coloneqq N\sm\nu G$ by attaching a 2-handle with core $m_G$ to $k$ and a 4-handle. Since by assumption there exists a dual sphere $S$ for $G$, the circle $k$ bounds the disk $\U\coloneqq S\sm m_G$ in $M$, so it is null homotopic in $M$. Thus, the inclusion $i\colon M \hra N$ induces a canonical isomorphism $\pi=\pi_1M\cong\pi_1N$, so the rightmost vertical map is an isomorphism. 
    
    The Hurewicz theorem $\pi_2M \cong H_2(\wt M)$ and $\pi_2N \cong H_2(\wt N)$, the fact that no 3-handles are attached and $\lambda(S,G)=1$, imply a short exact sequence 
    \[\begin{tikzcd}
        \pi_2M \rar[tail]{i_*} &
        \pi_2N \rar[two heads]{\lambda(\bull,G)} &  \Z[\pi].
    \end{tikzcd}
    \]
    From the bijection $[\D^2,M;k] \cong \pi_2M$, $J\mapsto -\U\cup J$ from~\eqref{eq:cup-U} it follows that the second to right map is a bijection as well:
    \[
        [\D^2,M;k] \to [\S^2,N]^G,\quad J\mapsto J\cup m_G = (-\U\cup J) \# (\U\cup m_G).
    \]
    The map $q$ from Corollary~\ref{cor:Dax-FQ} takes the relative $\Dax$ invariant to the Freedman--Quinn invariant for neat disks in $M$, now becomes an isomorphism thanks to Corollary~\ref{cor:S1xS2}:
    \[
        \Dax(K_0,K_1)\in \faktor{\Z[\pi]^\sigma}{\langle 1,\md(\pi_3M) \rangle} 
        \cong \faktor{\mathbb{F}_2 T_M}{\mu_3(\pi_3M)} \ni \FQ(K_0,K_1).
    \] 
    By construction $\FQ(K_0,K_1)$ maps under $i$ to the invariant $\FQ(F_0,F_1)$ for spheres $F_i=K_i\cup m_G$ in $N$. Thus, once we show $i_*(\mu_3(\pi_3M))= \mu_3(\pi_3N)$, the leftmost vertical map will be an isomorphism and the first square will commute. 
    This equality of subsets of $\mathbb{F}_2T_N$ is similar to Lemma~\ref{lem:Dax-M-M_G}, except that $\mu_3$ factors through the (surjective) Hurewicz maps, so it suffices to see that $i_*\colon H_3(\wt{M})\to H_3(\wt{N})$ is surjective. Indeed, the 2-handle is attached in a homotopically trivial way so leaves $H_3(\wt{M})$ unchanged.
    
    We are left with the most interesting vertical arrow, second from the left, $\Dk \to \SG$, $K\mapsto K\cup m_G$. 
    It is surjective because for any $F\in \SG$ we can arrange that it is isotopic to $m_G$ near $G$. Namely, use local coordinates in which $G$ and $m_G$ are linear around $F\cap G=m_G\cap G$, so the inverse function theorem applied to the restriction of $F$ to an $\R^2$ neighborhood of the point $F^{-1}(F\cap G)$, that is $F_|\colon\R^2 \hra \nu(F\cap G) = \R^2 \times \R^2 \to\R^2$, implies that $F$ is locally a graph over $m_G$ and hence can locally be isotoped to it; this isotopy can be extended to all of $F$, keeping $G$ fixed. 
    
    Finally, the injectivity of this map follows from the commutativity of the square with the $\Dax$ and $\FQ$ invariants, and the exactness of the top sequence: If $K_0$ and $K_1$ lead to isotopic spheres $F_i$ then $\Dax(K_0,K_1)=\FQ(F_0,F_1)=0$, hence $K_0,K_1$ are also isotopic.
\end{proof}

In this proof, we started with the 4-manifold $N$ and removed $\nu G$ to create $M$ with a new boundary component diffeomorphic to $\S^1 \times \S^2$. Conversely, we may start with a 4-manifold $M$ with such a boundary component (cf.\ Example~\ref{ex:interior-conn-sum}) and add $\D^2 \times \S^2$ to $M$ along it. If there exists $\U\colon\D^2\hra M$ with boundary $k=\S^1 \times pt$, then this larger 4-manifold $N$ contains in its interior a framed sphere $G=0 \times \S^2$, dual to a sphere $F_\U\coloneqq\U\cup_s(\D^2 \times pt)$. The normal Euler number of $F_\U$ depends on the precise way we glue $\D^2 \times \S^2$ to $M$, our $\Z$ choices being parametrized by $\pi_1(SO_2)\sra \pi_1(SO_3) \leq \Diff(\S^1 \times \S^2)$, if we want our disks to match up along $k$. Due to the factorization over $\pi_1(SO_3)=\Z/2$, there are at most two diffeomorphism types of manifolds $N$ that can arise, differing by a Gluck twist. Then Proposition~\ref{prop:disks-spheres} shows that the previous LBT for spheres in $N$ implies our LBT for disks in $M$.

%%%%%%%%%%%%%%%%%%%%%%%%%%%%%%%%%%%%%%%%%%%%%
\section{Remaining proofs}\label{sec:proofs}
Recall from the introduction our geometric action of $\Z[\pi]$ on the set of isotopy classes $\Dk$ of neat disks in $M$ with boundary $k$ for which there is a dual $G\colon\S^2\hra\partial M$. For example, $1\in\pi$ acts by sending $K$ to $K^G_{tw}$, obtained by an interior twist on $K$ as in Figure~\ref{fig:interior-twist} and tubing into $G$. When we turn these neat disks in $M$ into half-disks in $M_G = M\cup_G h^3$ as in Theorem~\ref{thm:space-LBT}, the resulting half-disks $K'$ and $K'_{tw}$ become isotopic as will see in Corollary~\ref{cor:U_tw}.

However, the remaining $\RGR$-action on $\Dk$ does descend to the set $\HD(M_G;k)$. We now define this action for half-disks in an arbitrary 4-manifold $X$, with boundary $k=k_-\cup u$ as before (see the introduction to Section~\ref{sec:disks}).

%%%%%%%%%%%%%%%%%%%%%%%%%%%%%%%%%%
\subsection{Geometric actions on half-disks in 4-manifolds}\label{sec:actions}
We now define a $\RGR$-action on isotopy classes of half-disks $\HD(X;k)$, with $\pi=\pi_1X$.
\begin{defn}\label{def:half-action}
    The action $(J,r)\mapsto J + \fm(r)$ of $\RGR$ on $\HD(X;k)$ is given by writing $r=\sum_i\e_i g_i$, with $\e_i\in\{\pm 1\}$, $g_i\in\pi\sm1$, and performing the following maneuvers for each signed group element $\e_i g_i$.
\begin{figure}[!htbp]
    \centering
    \begin{minipage}[t]{.64\textwidth}
        \centering
        \includestandalone[mode=buildmissing,width=\linewidth]{Figures-4dLBT/fig-fm}
        \caption{The part of a finger move in present; two open disks from the tip of the finger are in past and future.}
        \label{fig:fm}
    \end{minipage}%
    \begin{minipage}[t]{.36\textwidth}
        \centering
        \includestandalone[mode=buildmissing,width=0.95\linewidth]{Figures-4dLBT/fig-push-across}
        \caption{Push the double point $y_+$ along dashed arc across $u$.}
        \label{fig:push-across}
    \end{minipage}
\end{figure}

    To define the half-disk $J+\fm(g)$ for $g\neq1$ first do a finger move along $g$ on $J$, creating a generic immersion $J_g$ that has a single pair of transverse double points $y_\pm=J_g(a_\pm)= J_g(b_\pm)$ with opposite signs (let the $a$-sheet be on the finger), see Figure~\ref{fig:fm}. Then push off the points $a_+$ and $b_-$ across the free boundary $u$ of $J_g$ to create a new half-disk $J + \fm(g)$, as in Figure~\ref{fig:action}. 
    
    Define $J+\fm(-g)$ for $g\neq1$ similarly, but using the opposite sheet choice: push off $a_-$ and $b_+$ instead.
%    Finally, to define $J_{tw}\coloneqq J+\fm(\pm1)$ do an interior twist to $J$ with sign $\pm$, then push the unique double point off the free boundary (use either sheet).
\end{defn}
For readers not familiar with 4-dimensional maneuvers, we give a precise meaning to the above constructions. 
They are best understood in terms of a stratification of the space of all maps $\Map(\HD, X;k)$ of half-disks. The open, dense (codimension~0) stratum are the \emph{generic immersions} whose singularities are finitely many interior double points that are transverse. 
A path in $\Map(\HD, X;k)$ can be perturbed to a finite concatenation of paths of generic immersions (that can be implemented by self-isotopies of domain and range) and the following three types of paths $J_s,s\in[0,1]$ (or their reverses) that meet the codimension~1 strata transversely in a single point.
\begin{itemize}
\item 
    A \emph{finger move} is a regular homotopy $J_s, s\in\I$, so that the individual $J_s\colon\HD\imra X$ are generic immersions, except for one time $s=s_0$ when $J_{s_0}$ has one singularity: an interior double point that is not transverse but with tangent spaces meeting in a 1-dimensional subspace. Thus, one sheet moves by an ambient isotopy along a path $g$ before it exhibits a self-tangency and right after creates a pair of additional transverse double points $y_\pm$ as in Figure~\ref{fig:fm}.
\item 
    \emph{Pushing off a double point $y_+$} across the free boundary is another finger move $J_s$ so that the individual $J_s$ are generic immersions, except for one time $s=s_0$ when $J_{s_0}$ has one singularity: an intersection $J_{s_0}(a)=J_{s_0}(b_{s_0})$ between the interior point $a\in\HD\sm\partial\HD$ and the free boundary point $b_{s_0}\in\partial\HD$. Thus, the sheet of $J_0$ around $a\in J_0^{-1}(y_+)$ moves along the dashed arc $b_s$ so that $J_s(a)=J_s(b_s)$ for $s\in [0,s_0]$, whereas this double point disappears for $s>s_0$, as in Figure~\ref{fig:push-across}.
\item
    A \emph{cusp homotopy} $J_s$ as in Figure~\ref{fig:interior-twist} creates an interior double point with trivial group element. There is a single time $s=s_0$ when $J_s$ has a singularity: at the cusp point the differential of $J_{s_0}$ has rank only 1. We say that $J_{1}$ is obtained from $J_0$ by an \emph{interior twist}.
\end{itemize}
Note that in Definition~\ref{def:half-action} the homotopy class of the half-disk remains unchanged, $J\simeq J\pm\fm(r)^G$, and that using \emph{distinct sheet choices} is essential: pushing off $a_+$ and $a_-$ instead (or $b_+$ and $b_-$), gives a half-disk isotopic to $J$ (use an additional parameter to push off the tangency in the finger move).
\begin{figure}[!htbp]
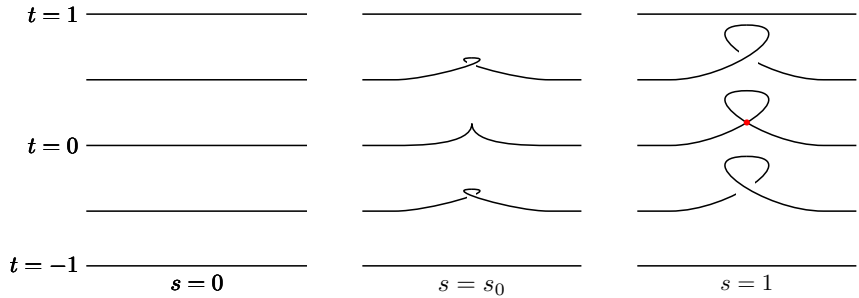

    \centering
    \includestandalone[mode=buildmissing,width=0.9\linewidth]{Figures-4dLBT/fig-interior-twist}
    \caption{Movies in time $t$ of the nonregular homotopy $J_s$ from a local disk to the interior twist on it. From right to left pull all arcs tight and observe a cusp at $s=s_0$, $t=0$.}
    \label{fig:interior-twist}
\end{figure}

The notation $J+\fm(r)$ hides a number of choices made in the construction, for example the choice of disjointly embedded arcs from the double points $y_\pm$ to the free boundary. It is possible to show geometrically that the isotopy class of the resulting half-disk is well defined and depends only on $J$ and $r$; however, we show this by the following indirect reasoning, which uses the observation that any choice comes with a canonical homotopy $J_s$ from $J$ to $J+\fm(r)$ (move the finger, then perform the push-offs). 
% Modulo $\md(\pi_3X)$ we know that this is independent of the choice of homotopy and shows the relative $\Dax$ invariant for the pair $(J+\fm(r),J)$ to be $[r]$.
\begin{prop} \label{prop:Dax-inverts}
    For all $r\in \RGR$ and any choices in the construction of $J+\fm(r)$, the relative $\Dax$ invariant is given by
    \[
\Dax(J+\fm(r),J)=[r] \in \faktor{\RGR}{\md(\pi_3X)}
    \]
    As a consequence, the isotopy class of the half-disk $J+\fm(r)$ only depends on the isotopy class of $J$ and on $[r]$.
\end{prop}
\begin{proof}
    By definition, $\Dax(J+\fm(g),J)= \Dax(J_s)$ is the sum of double points of arcs that foliate a homotopy $J_s\colon\HD\to X$ from $J_0=J+\fm(r)$ to $J_1=J$.
    
    \begin{figure}[!htbp]
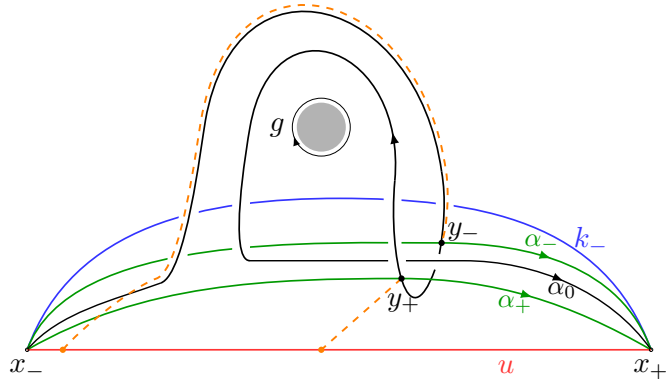

        \centering
        \includestandalone[mode=buildmissing,width=0.7\linewidth]{Figures-4dLBT/fig-fm-foliation}
        \caption{Arcs $\alpha_-,\alpha_0,\alpha_+$ in our foliation of a homotopy $J_s$ for the action, and dashed arcs guiding the push-offs.}
        \label{fig:fm-foliation}
    \end{figure}
    The homotopy in the description of the finger move mentioned above is one such choice and we now foliate it conveniently, see Figure~\ref{fig:fm-foliation}.
    If $J_{s_0}$ denotes the moment of self-tangency, when the finger touches the disk, we foliate it so that a single arc, call it $\alpha_0$, has this point as a transverse self-intersection. Then $J_s$ for some nearby $s<s_0$ has two double points $y_-,y_+$ which must occur on two distinct arcs $\alpha_-,\alpha_+$ of the foliation, so that $\alpha_+$ is closer to $u$ than $\alpha_0$ and $\alpha_-$ is further from it. We pick the guiding arc for the pushing-across operation for $y_+$ to be the shortest path from $y_+$ to $u$, so that pushing never produces self-intersecting arcs. For the guiding arc for $y_-$ we pick the part of $\alpha_0$ from $y_-$ all the way close to $x_-$, and then a short arc from there to $u$. Similarly, all arcs stay embedded during this push.
    
    There is clearly only one immersed arc in this 2-parameter family, namely $\alpha_0$ in the half-disk $J_{s_0}$, and we claim that its unique double point has group element $+g$, so that $\Dax(J_s)=g$. Indeed, this arc looks precisely like the one in the realization map family $\realmap(g)$, see \cite[Fig.4.21]{KT-highd} and cf.\ Figure~\ref{fig:g+g-inv}.
    
    The final claim now follows from the exact sequence in Proposition~\ref{prop:action=r}: The isotopy class of a half-disk is determined by its homotopy class and $\Dax$ invariant, and have just shown $J\simeq J+\fm(r)$ and $\Dax(J,J+\fm(r))=[r]$.
\end{proof}

% \begin{remark}\label{rem:J1}
%     For $J+\fm(\pm1)$ instead of a finger move we perform the interior twist, and then the obvious homotopy $J_t$ can be foliated so that all arcs are embedded. This shows that $\Dax(J+\fm(\pm1),J)=0$.
%     Note also that the Euler number $e_J$ changes by $2$ under an interior twist, so that our splitting from $\eta_{W,\U}$ from Proposition~\ref{prop:splitting} satisfies
% \[
%     \eta_{W,\U}(J+\fm(\pm 1)) = \eta_{W,\U}(J)\pm 1
% \]
%     because the homotopy class (and hence the lift $W$) stays unchanged.
% \end{remark}

\begin{prop}\label{prop:action=r}
    For any half-disk $\U\in\HD(X;k)$, our action induces a map $\U+\fm(\bull):\RGR\to \HD(X;k)$ that translates under the bijection \eqref{eq:augm-vs-non} $\HD(X;k)\cong\pi_1(\Emb_\partial(\D^1,X),u)$ to the realization map of \cite{KT-highd}. As a consequence, we have a group extension:
\[\begin{tikzcd}
    \faktor{\RGR}{\md(\pi_3X)} \arrow[tail,shift left]{rr}{\U+\fm(\bull)} 
    && \HD(X;k) \arrow[dashed,shift left]{ll}{\Dax} \arrow[two heads]{rr}{-\U\cup\bull} 
    && \pi_2X
    \end{tikzcd}
\]
  and $\Dax$ inverts the action map on homotopic half-disks.
\end{prop}
\begin{proof}
    We have seen in Theorem~\ref{thm:Dax-arcs} that the realization map $\realmap$ is inverted by the invariant $\Da$ for arcs, whereas our geometric action is inverted by the invariant $\Dax$ for half-disks by Proposition~\ref{prop:Dax-inverts}. By Lemma~\ref{lem:Dax=Da} these invariants agree, so $\realmap$ translates to $\U+\fm(\bull)$.
    The exact sequence now follows from the one in Proposition~\ref{prop:central}.
\end{proof}
\begin{figure}[htbp!]
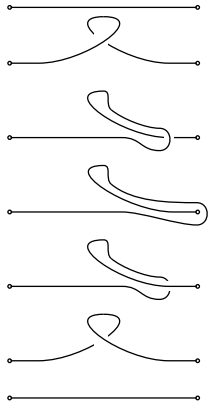

    \centering
    \includestandalone[mode=buildmissing,width=0.22\linewidth]{Figures-4dLBT/fig-boundary-twist}
    \caption{Movie of the half-disk $\U_{tw}$.}
    \label{fig:boundary-twist}
\end{figure}
\begin{cor}
\label{cor:U_tw}
    Let $\U_{tw}\colon\HD\hra X$ be the  half-disk obtained from $\U$ by one interior twist, followed by pushing the resulting double point off the free boundary $u$, as in Figure~\ref{fig:boundary-twist}. Then $\U_{tw}$ and $\U$ are isotopic half-disks in $X$.
\end{cor}
\begin{proof}
    These disks are homotopic by construction: first use a cusp homotopy on $\U$ and then a finger move across the free boundary. Moreover, their relative $\Dax$ invariant vanishes because we are not counting the trivial group element. Thus, Proposition~\ref{prop:action=r} implies the statement.
\end{proof}
\begin{remark}
    One can \emph{see} an isotopy from $\U_{tw}$ to $\U$ as follows: Rotate the arcs in Figure~\ref{fig:boundary-twist} around their right endpoints (comprising $u$) by $360$ degrees in the plane of the paper and isotope the over/under-strands accordingly. Then pull each strand tight.
\end{remark}

\subsection{From half-disks to neat disks}\label{sec:half-to-neat}

We have a commutative diagram
\begin{equation}\label{eq:augm-vs-non}
\begin{tikzcd}[column sep=small]
    \Dk \arrow{r}{\cong} \arrow[two heads]{d}{pr} & \HD(M_G;k^\e) \arrow{r}{\cong} \arrow[two heads]{d}{} 
    & \pi_1(\Emb^\e(\D^1,M_G;k_0),u^\e) \arrow[two heads]{d}{\ev^\e}\\
    \faktor{\Dk}{\langle \U_{tw}^G \rangle} \arrow{r}{}
    & \HD(M_G;k) \arrow{r}{\cong} 
    & \pi_1(\Emb(\D^1,M_G;k_0),u)
\end{tikzcd}
\end{equation}
where the top row is \eqref{eq:bijections} (implied by Theorem~\ref{thm:space-LBT}), and uses augmented boundary conditions for half-disks and augmented arcs in $M_G$. In the bottom row we have the simpler set of half-disks $\HD(M_G;k)$ with the non-augmented boundary condition, which is in turn homotopy equivalent to the space of non-augmented arcs in $M_G$. The bottom right arrow is another foliation map from \cite{KT-highd}, for the non-augmented case.

The bottom left arrow exists since $G$ bounds a 3-ball in $M_G$, so under the map $\Dk\cong\HD(M_G;k^\e)\sra \HD(M_G;k)$ the neat disk $\U_{tw}^G$ turns into the half-disk $\U_{tw}$, which is isotopic to $\U$ in $\HD(M_G;k)$ by Lemma~\ref{cor:U_tw}. We next show this map is an isomorphism.
    
Recall that \eqref{eq:eta-W-pr} says that the kernel of the rightmost vertical arrow is $\Z$, and that this sequence splits by a homomorphism $\eta_W$. We now define an analogous splitting for disks. For the tangent bundle of the universal cover of $M$ we have the Stiefel-Whitney class $w_2(\wt M) \in H^2(\wt M;\Z/2)^\pi\cong \Hom_\pi(\pi_2M,\Z/2)$. Since $G$ has trivial normal bundle in $M$, this induces a homomorphism
\[\begin{tikzcd}
    \Dk\rar{-\U\cup\bull} & \faktor{\pi_2M}{\langle G \rangle} \rar{w_2} & \Z/2,
\end{tikzcd}
    \quad  K\mapsto w_2(\wt M)(-\U\cup K).
\]
which is the mod 2 reduction of the Euler homomorphism $e_\U$ from~\eqref{eq:e-U}. Thus, the division by 2 in the following definition makes sense.
\begin{defn}
    Define a map $\eta_{W,\U}\colon\Dk\to\Z$ by
    \[
        \eta_{W,\U}(K)=\frac{1}{2}(e_\U(K) - W(-\U\cup K)),
    \]
    where $W\in H^2(\wt M;\Z)^\pi \cong \Hom_\pi(\pi_2M,\Z)$ is a $\pi$-equivariant integer lift of $w_2(\wt M)$ such that $W(G)=0$. 
    % The map $\eta_{W,\U}$ is a homomorphism, since both $e_\U$ and $-\U\cup\bull$ are.
\end{defn}

\begin{prop}
\label{prop:splitting}
    The bijection $\Dk\cong\HD(M_G;k^\e)$ from \eqref{eq:bijections} reduces to the bijection $\Dk/\langle \U_{tw}^G \rangle \cong \HD(M_G;k)$. Moreover
    \[
        \eta_{W,\U} \times pr\colon \Dk \overset{\cong}{\ra}\Z \times \faktor{\Dk}{\langle \U_{tw}^G \rangle}
    \]
    is an isomorphism of groups.
\end{prop}
% Proof of prop:splitting
\begin{proof}
    The top left bijection in \eqref{eq:augm-vs-non} induces a bijection on the fibers of the vertical surjections,
    % Moreover, the homomorphism $\eta_{W,\U}$ factors through the upper left bijection because both $W$ and $e_\U$ do. 
    so it suffices to show that $\eta_{W,\U}$ is a splitting.
    
    Thus, we just need to check that $\eta_{W,\U}$ takes $\U_{tw}^G$ to 1. Firstly, $e_\U(\U_{tw}^G)=2$ because: interior twist changes the framing by 2, finger move does not change it, neither does tubing into $G$ as it is framed. Secondly, $W(-\U\cup \U_{tw}^G)=0$ since $\U_{tw}^G$ is homotopic (rel.\ boundary) to $\U\#(-G)$ by Lemma~\ref{lem:homotopy-class}, and $W(G)=0$. Together we get the desired $\eta_{W,\U}(\U_{tw}^G)=1$.
\end{proof}

A lift $W$ as in Proposition~\ref{prop:splitting} exists because any oriented 4-manifold has a spin$^c$ structure~\cite{Vogt-T}. Alternatively, in our setting such $W$ was constructed in~\cite[Prop.B.18]{KT-highd}.
Using Proposition~\ref{prop:splitting} and a small diagram chase we get the following consequence for the group $\Dk^0\coloneqq\ker(e_\U)$ from \eqref{eq:Dk0} that is relevant in the study of mapping class groups in Section~\ref{sec:mcg-proofs}.
\begin{cor}\label{cor:w2}
    There is an exact sequence of groups
    \[
        \{1\} \ra \Dk^0 \ra \faktor{\Dk}{ \langle \U_{tw}^G \rangle} \overset{w_2}{\ra} \Z/2
    \]
    that is short exact if and only if $\wt M$ is not spin.
\end{cor}
Recall that we extend the involution $\sigma(g)=\ol{g}$ for $g\neq 1$ in a nonstandard way by setting $\sigma(1)\coloneqq0$. This makes the next result true: for $r=1$ it says that $K+\fm(1)^G=K_{tw}^G \simeq K \# (-G)$ for the $+1$-interior twist in Figure~\ref{fig:interior-twist}.
\begin{lemma}\label{lem:homotopy-class}
    For a neat disk $K$ in $M$ the geometric action of $r\in \Z[\pi]$ changes its homotopy class by a connected sum with $(\sigma(r)-r)$ copies of $G$, that is, $K+\fm(r)^G$ is homotopic to $K \# (\sigma(r)-r)\cdot G$ rel.\ boundary.
\end{lemma}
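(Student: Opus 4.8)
The plan is to follow the homotopy class of the disk through the geometric construction of the action. Write $r = n\cdot 1 + \sum_i\e_i g_i$ with $\e_i\in\{\pm1\}$ and $g_i\in\pi\sm1$. By definition $K+\fm(r)^G$ is built from $K$ by a concatenation of homotopies: finger moves along the $g_i$ together with $n$ interior twists, followed by tubing each resulting interior double point into a parallel copy of $\pm G$ (Figures~\ref{fig:action} and~\ref{fig:K-tw}). Finger moves and interior twists are homotopies rel.\ boundary, so they do not change the class of $K$ in $[\D^2,M;k]$; and tubing a sheet at a double point $y$ into a parallel copy of $\pm G$ along an arc $\delta$ changes this class by $\pm h\cdot G\in\Z[\pi]\cdot G$, where $h\in\pi$ is the loop given by the whisker to $y$ along the chosen sheet, followed by $\delta$, followed by a reference whisker for $G$. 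Hence $-\U\cup(K+\fm(r)^G) = (-\U\cup K) + (\text{sum over double points of contributions in }\Z[\pi]\cdot G)$. Performing the maneuvers for a sum $r+r'$ in disjoint regions shows this change is additive over the terms of $r$, so it suffices to compute it for $r=\pm1$ and for $r=\pm g$ with $g\neq1$ and check it equals $(\sigma(\pm1)-(\pm1))\cdot G = \mp G$ and $(\sigma(\pm g)-(\pm g))\cdot G = \pm(g^{-1}-g)\cdot G$ respectively. Crucially, by Proposition~\ref{prop:Dax-inverts} the isotopy class of $K+\fm(r)^G$ -- hence its homotopy class -- depends only on $[K]$ and $[r]$ (this together with $e_\U(K+\fm(r)^G)=e_\U(K)+2n$ removes the $\U_{tw}^G$-ambiguity), so we are free to use any convenient choice of tubing arcs and orientations.

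For $r=\pm1$ the interior twist (Figure~\ref{fig:interior-twist}) creates a single double point with trivial double point loop; tubing it into a small parallel copy of $\mp G$ along a short arc near the basepoint contributes $\mp G$. Thus $K\pm\fm(1)^G\simeq K\#(\mp G)$; in particular $\U_{tw}^G=\U+\fm(1)^G$ is homotopic rel.\ boundary to $\U\#(-G)$, as was used in the proof of Proposition~\ref{prop:splitting}.

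For $r=g\neq1$, after the finger move along $g$ the immersion $J_g=K+\fm(g)$ has a cancelling pair $y_+,y_-$ of transverse double points, both with double point loop $g$, written $g=[w_aw_b^{-1}]$ via the whiskers along the two sheets (with the $a$-sheet on the finger), and of opposite signs. The action tubes the $a$-sheet at $y_+$ and the $b$-sheet at $y_-$ into oppositely oriented parallel copies of $G$ (a push-off of the dual sphere $G$ of $k=\partial K$ sits near the basepoint). Routing the tubing arc at one of the two points so that it returns to the basepoint along the other sheet, the loop picked up is $g^{-1}$ at one point and $g$ at the other; because $y_+$ and $y_-$ have opposite signs, the orientation of the parallel $G$ forced by making the tubed disk embedded is opposite at the two points, so the two contributions are $g^{-1}\cdot G$ and $-g\cdot G$, summing to $(\sigma(g)-g)\cdot G$. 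The case $r=-g$ is the identical computation with the two sheet choices interchanged, giving $(g-g^{-1})\cdot G=(\sigma(-g)-(-g))\cdot G$. Adding up over the terms of $r$ gives $K+\fm(r)^G\simeq K\#(\sigma(r)-r)\cdot G$ rel.\ boundary.

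The main obstacle is the sign and orientation bookkeeping in the third paragraph: one must verify that the distinct sheet choices at $y_+$ and $y_-$ genuinely force oppositely oriented parallel copies of $G$ (so that the two contributions add rather than cancel) and that the two tubes pick up $g^{-1}$ and $g$ in the order compatible with the orientations of $k$, $G$ and $\U$ fixed in Section~\ref{sec:intro} -- the overall sign being invisible on $\RGR^\sigma$ but genuine for general $r\in\Z[\pi]$. Everything else -- the reduction to single terms, the class-invariance of finger moves and twists, and the freedom to choose convenient tubing arcs -- is routine.
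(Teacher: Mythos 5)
Your computation is essentially the paper's own proof: the paper also reduces to one finger move (resp.\ one interior twist for the coefficient of $1$), notes that the two tubes into parallel copies of $G$ carry the group elements $g$ and $\ol{g}$, and pins down the opposite signs, giving $-g+\ol{g}$ per finger move and $-1$ per positive twist. Two caveats. First, the appeal to Proposition~\ref{prop:Dax-inverts} plus the $e_\U$-count to remove the ``$\U_{tw}^G$-ambiguity'' is both unnecessary and dangerous: the lemma is a statement about the homotopy class of the disk produced by the given construction (any admissible choice of tubing arcs), so no a priori well-definedness is needed, and pinning down the \emph{neat-disk} class from the half-disk class is exactly what Proposition~\ref{prop:splitting} does --- whose proof uses this very lemma, so leaning on it would be circular. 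Second, the step you flag as the ``main obstacle'' is genuinely the content of the paper's proof: the orientation of the parallel copy of $G$ is not ``forced by embeddedness'' (either orientation yields an embedded disk) but by orientation coherence of the resulting oriented disk, i.e.\ by the sign of the double point being cancelled together with $\lambda(K,G)=1$; the paper settles the overall sign by observing $\mathrm{sign}_{y_+}(\D^2_a,\D^2_b)=+1$ forces the tube at $y_+$ into $-G$ with guiding element $g$, and analogously $+\ol{g}$ at $y_-$. With that verification supplied (and noting that ``free to use any convenient choice of \emph{orientations}'' should be deleted, since the orientations are determined), your argument coincides with the paper's.
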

\begin{proof}
    Under the above correspondence with half-disks, during the homotopy from $K$ to $K+\fm(g)$ we cross $u$ twice, which adds two copies of $G$. For the positive double point $y_+$ we use the $a$-sheet so the group element guiding the tube into $G$ is clearly $g$, but the sign is $-1$. Indeed, $\mathrm{sign}_{y_+}(\D^2_a,\D^2_b)=+1$ and also $\lambda(K,G)=1$, so we can see in the 3-dimensional model of Figure~\ref{fig:push-across} that we have to tube into the negatively oriented $G$. The argument is analogous for the negative double point $y_-$, giving $+\ol{g}$.
\end{proof}

%%%%%%%%%%%%%%%%%%%%%%%%%%%%%%%%%%
\subsection{Main proofs}\label{sec:main-proofs}
In this section we prove the remaining results from the introduction, \ref{thm-intro:Dax-values}, \ref{prop-intro:4-term}, \ref{thm-intro:groups} and \ref{prop:commutators}. Recall that Theorem~\ref{thm-intro:Dax-classifies} is a clear consequence of Theorem~\ref{thm-intro:Dax-values}, and Lemma~\ref{lem:Dax-invt} was proven as Lemma~\ref{lem:HDax} and Corollary~\ref{cor:Dax-FQ}, whereas the proof of Lemma~\ref{lem:M-c} was given in Section~\ref{sec:dax-proofs}. Therefore, we can now concentrate on disks in the setting $(M,\U,G)$.
\begin{proof}[Proof of Theorem~\ref{thm-intro:Dax-values}]
    The bijection between half-disks in $M_G$ and disks in $M$ takes the operation of ``pushing across the free boundary'' into ``tubing into the dual $G$'', so the $\RGR$-action on half-disks from Definition~\ref{def:half-action} translates to the $\RGR$-action on disks from the introduction (see Figure~\ref{fig:action}). Their $\Dax$ invariants clearly agree (cf.\ the last paragraph of Section~\ref{sec:Dax-half}). Therefore, putting $X=M_G$ in Proposition~\ref{prop:Dax-inverts} implies the analogous statement for neat disks: the action $+\fm(\bull)^G$ of the group $\RGR$ on $\Dk$ satisfies
    \[
        \Dax(J+\fm(r)^G,J)=r\in\faktor{\RGR}{\md(\pi_3M)}.
    \]
    Thus, given homotopic neat disks $K_0,K_1\in\Emb(\D^2,M;k)$  whose relative $\Dax$ invariant $\Dax(K_0,K_1)=\RGR^\sigma/\md(\pi_3M)$ is trivial, the extension in Proposition~\ref{prop:central} with $\U=K_1$ implies that they are are isotopic. %Indeed, $\Dax(\bull,\U)$ is precisely the inverse of the inclusion map $\U+\fm(\bull)^G$.
    Finally, Lemma~\ref{lem:homotopy-class} says that $r\in \RGR$ preserves the homotopy class if and only if $r\in \RGR^\sigma$, i.e.\ is fixed under the involution. 
\end{proof}

Before proving Propositions~\ref{prop-intro:4-term}, \ref{thm-intro:groups} and \ref{prop:commutators}, we state and prove Proposition \ref{prop:group-diagram}. However, let us first recall from Definition~\ref{def:action} that the notation 
\[\begin{tikzcd}[cramped,column sep=17pt] 
        G\arrow[squiggly]{r}{a} & S\arrow[two heads]{r} & S/G 
    \end{tikzcd}
\]
means that we have a group action $a\colon G \times S \to S$ of a group $G$ on a set $S$, with orbit set $S/G$. 
Note that the arrow labeled by $a$ is not a map, but a shortcut for the group action;
once we choose $s\in S$ we do have a map $G\to S$, $g\mapsto a(g,s)$. % If $H$ as above exists then this induces an injection $G/H\ira X$.
One way to get a short exact sequence of a group action is when $S$ is a group and the action comes from a group homomorphism $\rho\colon G\to S$ via $a(g,s)\coloneqq\rho(g)\cdot s$, and $H=\ker(\rho)$. This exhibits $S$ as a group extension of $G/H$ by $S/G$ if $\im(\rho)\trianglelefteq S$ is a normal subgroup. 
 
For example, if $p\colon E\to B$ is a fibration with a basepoint $e\in E$ the lifting property gives a $\pi_1(B,b)$-action on $\pi_0(F)$, where $b\coloneqq p(e)$ and $F\coloneqq p^{-1}(b)$ is the fiber over $b$. The stabilizer of a point $x\in F$ is the image of $\pi_1(E,x)$ in $\pi_1(B,b)$. In our notation this is a short exact sequence of the group action
\[\begin{tikzcd}
    \pi_1(B,b) \arrow[squiggly]{r}{a} & \pi_0(F)\arrow[two heads]{r} & \faktor{\pi_0(F)}{\pi_1(B,b)} \cong \pi_0(p)^{-1}[b] \subseteq \pi_0(E).
\end{tikzcd}
\]
This action is usually written as an exact sequence of groups/sets, where one uses $e$ to get from the group action to a map $\pi_1(B,b) \to \pi_0(F), g\mapsto g(e)$:
\[
    \dots \to \pi_1(E,e) \to \pi_1(B,b) \to \pi_0(F) \to \pi_0(E) \to \pi_0(B)
\]
The fibration that gives our group actions on disks in a 4-manifold arises from the inclusion $\Emb\subseteq \Map$ of embeddings of arcs into all maps (rel.\ boundary) as in diagram \eqref{eq:Map-seq}. If $H$ is the homotopy fiber of that inclusion then we get a fibration sequence $\Omega \Emb \to \Omega \Map \to H$ where the loop spaces are taken at a neat embedding $u\colon\D^1 \hra X$ as usual. Applying the general case of a fibration above to this case gives the group action
\[\begin{tikzcd}
        \pi_1(H,u) \arrow[squiggly]{r}{a} & \pi_0(\Omega \Emb)\arrow[two heads]{r} & \faktor{\pi_0(\Omega\Emb)}{\pi_1(H,u)} \cong \pi_0(\Omega\Map) 
\end{tikzcd}
\]
that can be rewritten via $\pi_1(H,u)\cong \pi_2(\Map,\Emb,u)\cong\RGR$ as 
\[\begin{tikzcd}
        \RGR \arrow[squiggly]{r}{a} & \pi_1(\Emb,u)\arrow[two heads]{r} & \faktor{\pi_1(\Emb,u)}{\RGR} \cong \pi_1(\Map,u)\cong \pi_2X 
\end{tikzcd}
\]
This action leads to Proposition~\ref{prop:central} by identifying the stabilizers of the action, given by the image of $\pi_1(\Omega \Map)\cong \pi_2(\Map)\cong\pi_3X$, with $\md_u(\pi_3X)$.

\begin{prop}\label{prop:group-diagram}
    In the setting $(M,\U,G)$ there is a commutative diagram of short exact sequences of group actions, with the connecting map from the upper right to the lower left equal to the identity:
    \[\begin{tikzcd}[column sep=3em]
        &&  \faktor{\Z[\pi]}{\RGR^\sigma} \arrow[tail,squiggly]{d}{\# (\sigma-\Id)\cdot G} 
    \\
        \faktor{\RGR^\sigma}{\md(\pi_3M)} \arrow[tail,squiggly]{r}{+\fm^G} \arrow[tail,squiggly]{d} &
        \Dk \arrow[two heads]{r}{j}\arrow[equal]{d} &
       \mu_2^{-1}(0)\arrow[two heads]{d}{p_\U}  
    \\
        \faktor{\Z[\pi]}{\md(\pi_3M)} \arrow[two heads]{d} \arrow[tail,squiggly]{r}{+\fm^G} &
        \Dk \arrow[two heads]{r}{p_\U} & 
        \faktor{\pi_2M}{\Z[\pi]\cdot G} 
    \\
        \faktor{\Z[\pi]}{\RGR^\sigma} & & 
    \end{tikzcd}
    \]
    Here $p_\U(K) = [-\U\cup K]\pmod{\Z[\pi]\cdot G}$, as in Theorem~\ref{thm-intro:groups}.
\end{prop}
% Proof of prop:group-diagram
\begin{proof}
    The exactness of the lower horizontal sequence is immediate from Proposition~\ref{prop:action=r} and the bijection $\Dk\cong\HD(M_G;k)$. Indeed, we identified the $\fm$-actions and Dax invariants for neat and half-disks in the proof of Theorem~\ref{thm-intro:Dax-values}, and we have $\md(\pi_3M_G)=\md(\pi_3M)$ by Lemma~\ref{lem:Dax-M-M_G} and $\pi_2M_G\cong\pi_2M/\Z[\pi]\cdot G$ by Lemma~\ref{lem:lambda-splits}.

    To see that the right vertical sequences is exact, we extend it to the right:
\[\begin{tikzcd}
        & \faktor{\Z[\pi]}{\RGR^\sigma} \arrow[tail]{d}{\U \# (\sigma(\bull)-\bull)\cdot G} \arrow[tail]{r}{\sigma-\Id} 
        &  \Z[\pi] \arrow[tail]{d}{\U \# (\bull)\cdot G} \arrow[two heads]{r} 
        &  \faktor{\Z[\pi]}{\im(\sigma-\Id)}  \arrow[equal]{d} 
    \\
        \Dk \arrow[two heads]{r}{j}
        & \mu_2^{-1}(0) \arrow[two heads]{d}{p_\U} \arrow[tail]{r} 
        & {[\D^2,M;k]} \arrow[two heads]{d}{p_\U} \arrow[two heads]{r}{\mu_2} 
        &  \faktor{\Z[\pi]}{\langle 1, \ol{g}-g \rangle} 
    \\
        & \faktor{\pi_2M}{\Z[\pi]\cdot G} \arrow{r}{\cong}
        &   \pi_2M_G 
        &
    \end{tikzcd}
\]
    and observe that the new vertical sequence is short exact by Lemma~\ref{lem:lambda-splits}, and the two new horizontal 3-term sequences are short exact by inspection.

    The desired connecting map is computed as follows. Start with $g\in\pi_1M$ representing a generator of the group $\Z[\pi]/\Z[\pi]^\sigma$ in the lower left corner. Acting by $g$ on $\U$ gives $\U+\fm(g)^G\in  \Dk$, which is by $j$ mapped to the homotopy class $[\U+\fm(g)^G] =[\U \# (\sigma(g)-g)\cdot G]$ by Lemma~\ref{lem:homotopy-class}. By definition, this is also the image of $g$ under the map from the upper right corner of the diagram. Thus, its connecting map is the identity.
    
    The upper horizontal sequence is now exact by a diagram chase.
\end{proof}

\begin{proof}[Proof of Proposition~\ref{prop-intro:4-term}]
    The exactness of the group action follows from Theorem~\ref{thm-intro:Dax-values}, except that we still need to prove that $\mu_2$ is onto and $\im(j)=\mu_2^{-1}(0)$. The latter follows from the Norman trick using the existence of our dual sphere $G$. To prove the former, turn a null homotopy of $k$ into a generic neat immersion $J\colon\D^2\imra M$. Then the boundary condition implies that $\lambda(J,G) = 1$ and hence for any $r\in\RGR$ we get
    \[
\mu_2(J \# r\cdot G) = \mu_2(J) + \lambda(J, r\cdot G) = \mu(J) + r.
    \]
    This shows that $\mu_2$ is surjective and in particular, the value $0$ is attained. Using the Norman trick, this also shows that $\Dk$ is not empty.
\end{proof}

\begin{proof}[Proof of Theorem~\ref{thm-intro:groups}]
    The extension in the statement is precisely the lower exact sequence in Proposition~\ref{prop:group-diagram} (which was obtained from the extension of Proposition~\ref{prop:action=r}). To identify the inverse of the action $+\fm^g$ on the kernel of $-\U\cup\bull$ we use the splitting result in Proposition~\ref{prop:splitting} and the inverse $\Dax$ from Proposition~\ref{prop:action=r}. The only subtlety is that we only know that $K$ and $\U$ are homotopic modulo $G$, so we cannot use the Dax invariant for neat disks as in the rest of the introduction. However, we can use the Dax invariant for half-disks and then the result follows from those two propositions.
\end{proof}
 
%  \begin{proof}[Proof of Proposition~\ref{prop:splitting}]
%      This comes from Theorem~\ref{thm:Dax-arcs}, where we give a nice interpretation of the quotient as the fundamental group of arcs (without augmentation) in $M_G$. It follows that $\U_{tw}^G$ is a central element in our group of embeddings that comes from $1\in \Z[\pi]$ and maps to $1\in\Z$ under our maps in Theorem~\ref{thm-intro:groups}.
%  \end{proof}
    
\begin{proof}[Proof of Proposition~\ref{prop:commutators}]
    In Proposition~\ref{prop:central} we computed the commutator pairing for the group $\pi_1\Emb_\partial(\D^1,M_G)\cong\HD(M_G,k)$, but not the $\e$-augmented version $\HD(M_G;k^\e)\cong\Dk$. However, we also showed that the latter group is a product with $\Z$, see Proposition~\ref{prop:splitting}. Since the generator of this $\Z$ is given by $\U_{tw}^G\in\Dk$ which is central, it follows that the commutator pairing for $\Dk$ is the same.
\end{proof}

\begin{remark}\label{rem:future-gp-str}
    In upcoming work, we will actually show that the sequence in Proposition~\ref{prop-intro:4-term} can also be equipped with group structures after choosing an undisk $\U$. 
    Given the quadratic property~\ref{eq:mu-2-quadratic} of Wall's self-intersection invariant, it is surprising to find a group structure on $[\D^2,M;k]$ for which $\mu_2$ becomes a homomorphism. Via $K\mapsto (-\U\cup K)$, this will give the \emph{twisted group structure} $a_1\star a_2= a_1 + a_2 - \lambda(a_1,a_2)\cdot [G]$ on $\pi_2M$. However, $\mu_2$ does \emph{not} become a homomorphism on $\pi_2M$, it instead stays quadratic! 
    Nevertheless, the map $a\mapsto \mu_2(a) + \lambda(a,\U)$ is a homomorphism $(\pi_2M,\star) \to \Z[\pi]/\langle 1, \ol{g}-g \rangle$, since $\mu_2(-\U\cup K) = \mu_2(K) - \lambda(\U,K)$. 

    We compare the group $(\pi_2M,\star)$ to Proposition~\ref{prop:commutators}, which computed the commutator $[K_1,K_2]=\U +\fm(r)^G$ with $r=\lambdabar(a_1, a_2)\in\RGR$, for $K_i\in\Dk$ and $a_i=[-\U\cup K_i]\in\pi_2M$. Since
    $\U +\fm(r)^G\simeq \U \# (\ol{r}-r)G$, it follows that $-\U\cup[K_1,K_2]\simeq -\U\cup(\U \# (\ol{r}-r)\cdot G)\simeq (\ol{r}-r)\cdot G$.
    This agrees with the description of $\star$ as $\lambda$ is hermitian and the coefficients of $1$ cancel.
\end{remark}

%%%%%%%%%%%%%%%%%%%%%%%%%
\subsection{An extension of mapping class groups}\label{sec:mcg-proofs}
In this section we prove Theorem~\ref{thm:mcg}, a further result from Section~\ref{sec:further results}. In the setting $(M,\U,G)$ let $\U'$ denote the half-disk in $M_G\coloneqq M \cup_G h^3$ corresponding to $\U$. Extending diffeomorphisms by the identity over the additional handle gives inclusions $i_G\colon\Diff_\partial(M)\hra\Diff_\partial(M_G)$ and
$i_\U\colon\Diff_\partial(M\sm\nu\U)\hra\Diff_\partial(M)$.
\begin{lemma}\label{lem:i_Gi_U}
   The composite map $i_G\circ i_\U$ is a weak homotopy equivalence. Therefore, $\pi_ki_G\circ\pi_ki_\U$ are isomorphisms for all $k\geq0$ and in particular, $\pi_ki_\U$ is split injective and $\pi_ki_G$ is split surjective.
\end{lemma}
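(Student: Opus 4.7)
The plan is to recognize $i_G\circ i_\U$ as extension by the identity along a cancelling handle pair, convert this via handle cancellation into extension along a boundary collar, and then invoke the standard fact that collar extension is a weak homotopy equivalence.

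First, I would unwind the composite: $i_G\circ i_\U$ sends a diffeomorphism $\psi\in\Diff_\partial(M\sm\nu\U)$ to the diffeomorphism of $M_G$ that equals $\psi$ on $M\sm\nu\U$ and equals the identity on $h^2\cup h^3$, where $h^2$ is the 2-handle with cocore $\U$ coming from \eqref{eq:MG} and $h^3$ is the 3-handle attached along $G$. Since $G$ is dual to $k=\partial\U$ with trivial normal bundle, $(h^2,h^3)$ forms a geometrically cancelling handle pair attached to $\partial(M\sm\nu\U)$, which is exactly how \eqref{eq:MG} was derived.

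Next, I would invoke the cancellation diffeomorphism of \eqref{eq:MG}: there is a diffeomorphism $\Phi\colon M_G \xrightarrow{\ \cong\ } M\sm\nu\U$ which is the identity outside a neighborhood $N$ of $h^2\cup h^3$ and sends $N$ onto a collar of the corresponding region $A\subseteq\partial(M\sm\nu\U)$. Conjugating by $\Phi$ converts $i_G\circ i_\U$ into the ``collar extension'' map
\[
e_A\colon \Diff_\partial(M\sm\nu\U) \longrightarrow \Diff_\partial\bigl((M\sm\nu\U)\cup (A\times I)\bigr),
\]
sending a diffeomorphism to its extension by the identity over the new collar $A\times I$. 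Because the space of cancellation data for a cancelling handle pair is contractible, any two choices of $\Phi$ differ by an isotopy supported near $h^2\cup h^3$, so conjugation by $\Phi$ is well-defined on homotopy groups.

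Finally, I would appeal to the standard fact that $e_A$ is a weak homotopy equivalence: the ``shrink the collar'' diffeomorphism $(M\sm\nu\U)\cup (A\times I)\cong M\sm\nu\U$ (which is the identity outside a slightly larger collar) furnishes a homotopy inverse to $e_A$ on each $\pi_k$, via a parametrized collaring argument. Putting the three steps together shows $i_G\circ i_\U$ is a weak homotopy equivalence, and the conclusions about $\pi_k i_G$ and $\pi_k i_\U$ follow immediately from the factorization. The main technical obstacle is the last step, promoting collar-shrinking from a bijection on components to a weak equivalence at all homotopy levels; this uses the parametrized isotopy extension theorem, which is standard but deserves care, particularly around the corner where $A$ meets the rest of $\partial(M\sm\nu\U)$.
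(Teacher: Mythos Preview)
Your proposal is correct and follows essentially the same approach as the paper: both recognize $i_G\circ i_\U$ as extension by the identity over the cancelling pair $h^2\cup h^3$, identify this region as a half-ball/collar attached along $\partial(M\sm\nu\U)$, and then invoke the fact that extending diffeomorphisms by the identity over such a region is a weak equivalence. The only cosmetic difference is in the final step: the paper works directly in $M_G$, observes that $Y'\coloneqq\nu\U\cup h^3$ is a ``local normal tube'' to $\partial M_G$ along a $3$-disk $Z'$, and quotes a Cerf-type theorem from \cite[Prop.~2.10]{KT-highd} to conclude; you instead conjugate back to $M\sm\nu\U$ and use a parametrized collar-shrinking argument. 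Your route is slightly more hands-on (and introduces the need to worry about choices of $\Phi$ and corners of $A$), whereas the paper's citation packages all of that into the Cerf statement, but the content is the same.
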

\begin{proof}
    The inclusion $M\sm\nu\U\hra M\hra M_G$ is an embedding isotopic to a diffeomorphism, since $h^2=\nu\U$ and $h^3$ form a canceling pair of handles by~\eqref{eq:MG}. Namely, $Y'\coloneqq\nu\U\cup h^3=M_G\sm(M\sm\nu\U)$ is diffeomorphic to a half-ball, whose boundary is the union of $3$-disks $D$ in $\partial(M\sm\nu\U)$ and $Z'$ in $\partial M_G$. Thus, we can use $Y'$ to isotope $D$ to $Z'$, ending up with the claimed diffeomorphism.
    
    Now, $i_G\circ i_\U\colon \Diff_\partial(M\sm\nu\U)\hra\Diff_\partial(M_G)$ is a weak homotopy equivalence by Cerf's Theorem~\cite[Prop.2.10]{KT-highd} for $Y=X=M_G$, $y=\Id_{M_G}$, $Z=\partial M_G$. Indeed, $Y'\subseteq M_G$ is a local normal tube to $\partial M_G$ along $Z'$, so $\Emb_{Z}(Y,X;y)=\Diff_\partial(M_G)$, while $\Emb_{Z\cup Y'}(Y,X;y)$ is precisely the image of $i_G\circ i_\U$.
\end{proof}

In the following commutative diagram, all maps in the square on the right, as well as the vertical map at the bottom left, are restriction maps of embedding spaces to various submanifolds. Therefore, they are fibrations by Cerf's Theorem, see \cite[Thm.2.9]{KT-highd}.
\[
\begin{tikzcd}
    \Diff_\partial(M\sm\nu\U)\arrow{d}[swap]{i_\U}\arrow[dashed]{dr}{\simeq}
    \\
         \Diff_\partial(M) \arrow{d}[swap]{\text{act on } \U} \arrow{r}{i_G} 
        &  \Diff_\partial(M_G) \arrow{d}{\text{act on } \U'} \arrow[]{r}{\text{act on } h^1} 
        &  \Emb_{\partial \D^1 \times \D^3}(\D^1 \times \D^3, M_G)  \arrow{d}{\ev_{\D^1 \times [0,\e]}}
    \\
        \Emb(\D^2, M;k) \arrow{r} 
        & \Emb(\HD, M_G;k_-) \arrow[]{r}{\ev_{\D^\e_+}} 
        &  \Emb_\partial^\e(\D^1, M_G)
    \end{tikzcd}
\]
Here $h^1$ is the 1-handle in $M_G$ that is dual to the 3-handle $h^3$ that was attached to $M$. In other words, $h^1$ runs from $\partial M_G$ to itself and removing it brings us back to $M$.
The surjectivity of $\pi_ki_G$, $k\geq0$, from Lemma~\ref{lem:i_Gi_U}, implies that $\pi_k(\text{act on } h^1)$ are trivial, and the connecting maps are injective (and are given by ambient isotopy extension). Similarly, $\pi_ki_\U$ is injective (and a section for $\pi_ki_G$), so the map $\pi_k(\text{act on }\U)$ is surjective, except for $k=0$, where we need to determine the image. For this we extend the diagram down as follows:
\[\begin{tikzcd}[column sep=0.8cm]
        \pi_1 \Emb_{\partial\D^1 \times \D^3}(\D^1 \times \D^3, M_G) \arrow[tail]{d}{\pi_1\ev_{\D^1 \times [0,\e]}} \arrow[tail]{r}{\pi_0\amb}
        & \pi_0 \Diff_\partial(M) \arrow{d}{\pi_0(\text{act on } \U)} \arrow[two heads]{r}{\pi_0i_G} 
        & \pi_0 \Diff_\partial(M_G) 
        %\arrow[bend right]{l}[swap]{\pi_0i_\U}
    \\
        \pi_1 \Emb_\partial^\e(\D^1, M_G) \arrow{r}{\pi_0\amb_\U}[swap]{\cong} \arrow{d}{\delta}
        & \pi_0 \Emb(\D^2, M;k) \arrow{d}{e_\U} &
    \\
       \pi_0 \Omega \S^1 \arrow{r}{\cong}
       & \Z &
    \end{tikzcd}
\]
Here $\delta$ is the connecting map for the fibration $\ev_{\D^1\times[0,\varepsilon]}$, whose fiber is homotopy equivalent to $\Omega \S^1$; since the normal bundle of $u^\e$ is 2-dimensional, its unit sphere is $\S^1$). Therefore, as $\pi_1\Omega\S^1=0$, the map $\pi_1\ev_{\D^1\times[0,\e]}$ is injective.

To see that the bottom square in the last diagram commutes, note that $\delta$ is by definition given by ambiently extending a loop of $\e$-augmented arcs to a loop of ``thickened'' arcs, which amounts to completing the given loop in the Stiefel bundle $V_2(M_G)$ to a $V_3(M_G)$. In $\pi_0\Omega\S^1$ this counts the framing given by that additional vector. If the loop is ambiently extended from $\U$ to a disk $K\in\Emb(\D^2,M;k)$, this 
additional vector is tangent to $K$ and this number becomes the relative framing of the normal bundle of $K$ with respect to $\U$ (along their boundary).

Hence the domain of $\pi_0\amb$ maps isomorphically onto $\ker(\delta)\cong\ker(e_\U)$ which we denoted by $\Dk^0$ in \eqref{eq:Dk0}. We thus get the sequence claimed in Theorem~\ref{thm:mcg}:
\[\begin{tikzcd}
       \Dk^0 \arrow[tail]{r}{a_{\U}}
        & \pi_0 \Diff_\partial(M)  \arrow{r}{\pi_0i_G} \arrow[bend right]{l}[swap]{\pi_0(\text{act on }\U)}
        & \pi_0 \Diff_\partial(M_G) \arrow[bend right]{l}[swap]{\pi_0i_\U}
        %\arrow[two heads]{r} 
        %& \wt \pi_1M,
    \end{tikzcd}
\]
Note that $a_{\U}\coloneqq\pi_0\amb\circ(\pi_1\ev_{\D^1\times[0,\varepsilon]})^{-1}\circ\pi_0\foliate^\e_\U$ is a homomorphism since lifting a composition of loops carefully shows that $\amb$ is a homomorphism, and on $\Dk^0<\Dk\coloneqq\pi_0 \Emb(\D^2, M;k)$ the group structure by construction comes from that of $\pi_1 \Emb_\partial^\e(\D^1, M_G)$ via $\pi_0\amb_\U$.
However, acting on $\U$ gives only a point-theoretic splitting $\pi_0(\text{act on }\U)$ of $a_{\U}$. \qed

\newpage
\phantomsection
\section*{Tables of notation}\setcurrentname{Tables of notation}\label{sec:notation}

\addcontentsline{toc}{section}{Tables of notation}

\begin{center}
\small
\begin{tabular}{ c|c|c|c|c|c }
    \parbox[c][0.8cm][c]{1.9cm}{\centering Object} & \parbox[c][1.3cm][c]{2.4cm}{\centering Notation for smooth maps/ embeddings} &  
    \parbox[c][0.8cm][t]{1.7cm}{\centering Boundary condition} & 
    \parbox[c][1.3cm][t]{1.5cm}{\centering Isotopy classes of emb.\ } & 
    \parbox[c][1.3cm][t]{1.5cm}{\centering Homotopy classes of maps}
    & Find in
    \\\hline
    % \multirow{3}{*}{\parbox[t]{2.8cm}{\centering smooth oriented \\ conn.\ 4-manifold}}
    %  & M & $\partial M\neq\emptyset$ & &\\
    %  & X & $\partial X\neq\emptyset$ & / & / \\
    %  & N & $\partial N\text{ arbitrary}$ & &  \\
    % \hline
    neat disks in $X$ 
        & \parbox[l][1cm][c]{2.4cm}{\centering $\Map(\D^2,X;k)$ $\Emb(\D^2,X;k)$} & $k\colon\S^1\hra\partial X$ & $\Dk$ & $[\D^2,X;k]$
        & \parbox[c][1cm][c]{1.9cm}{\centering 
        Thm.~\ref{thm-intro:Dax-values} Thm.~\ref{thm:space-LBT}} \\\hline
    \parbox[c][1cm][c]{1.9cm}{\centering spheres in $N$ dual to $G$} 
        & \parbox[l][1cm][c]{2.4cm}{\centering $\Map(\S^2,N)^G$ $\Emb(\S^2,N)^G$} & / & $\SG$ & $[\S^2,N]^G$
        & 
        Sec.~\ref{sec:spheres} \\\hline
    half-disks in $X$
        & \parbox[l][1.1cm][c]{2.4cm}{\centering 
        $\Map(\HD,X;k)$ $\Emb(\HD,X;k)$} & \parbox[l][1.6cm][c]{2.2cm}{\centering
        $k= k_-\cup u$
        $k_-\colon\D^1\hra\partial X$ $u\colon\D^1\overset{neat}{\hra} X$} & $\HD(X;k)$ & $[\HD,X;k]$
        & \parbox[c][1cm][c]{1.9cm}{\centering 
        Thm.~\ref{thm:space-LBT}
        Def.~\ref{def:boundary conditions}}\\\hline
    neat arcs in $X$ &        
        \parbox[l][1cm][c]{2.4cm}{\centering 
        $\Map_\partial(\D^1,X)$
        $\Emb_\partial(\D^1,X)$} & $k_0\colon\S^0\hra\partial X$ & / & / 
        & Sec.~\ref{sec:arcs}\\ 
    \hline
\end{tabular}
\end{center}

\medskip

\renewcommand{\arraystretch}{1.5}
\begin{center}
\small
\rotatebox{0}{
\begin{tabular}{c|c|c}
\multicolumn{3}{c}{Dax invariants for arcs}\\\hline
    \multirow{2}{*}{$\Da$} 
    % & of homotopies
    & $\pi_2\big(\Imm_\partial(\D^1,X),\Emb_\partial(\D^1,X),u\big)\ra\Z[\pi_1X\sm1]$ 
    & Def.~\ref{def:Dax}\\
    % & of arc families
    & $ \ker\big(\pi_1\big(\Emb_\partial(\D^1,X),u\big)\sra\pi_2X\big)\ra\Z[\pi_1X\sm1]/\md_u(\pi_3X)$
    & Eq.~\ref{eq:Da-final} \\ \hline
\multicolumn{3}{c}{Dax homomorphisms}\\\hline
    $\md_u$
    % & $\coloneqq\Da\circ\delta_{\Imm}$ 
    & $\md_u\coloneqq\Da\circ\delta_{\Imm}\colon\;\pi_3X \ra \Z[\pi_1X\sm1]$  
    & Def.~\ref{def:dax}
    \\
    $\md$
    % & $\coloneqq\md_u$ 
    & $\md\coloneqq\md_u\colon\;\pi_3X \ra \Z[\pi_1X\sm1]^\sigma$ if $u$ is homotopic into $\partial X$.
    & Eq.~\ref{eq:dax-def}
    \\\hline
\multicolumn{3}{c}{Dax invariants for homotopic half-disks}\\\hline
    \multirow{2}{*}{$\Dax$}
    % & of homotopies 
   % & $\Map((\I,\{0,1\}),(\Map(\HD,X;k),\Emb(\HD,X;k)))\ra\Z[\pi_1X\sm1]$ 
   & \{homotopies between half-disks in $\Emb(\HD,X;k)\}\ra\Z[\pi_1X\sm1]$
    & Def.~\ref{def:Dax(H)}  \\
    % & relative 
    & \{two homotopic half-disks from $\HD(X;k)\}
    \ra\Z[\pi_1X\sm1]/\md(\pi_3X)$
    & Lem.~\ref{lem:HDax}\\\hline
\multicolumn{3}{c}{Dax invariants for homotopic neat disks}\\\hline
    \multirow{2}{*}{$\Dax$}
    % & of homotopies 
   % & $\Map((\I,\{0,1\}),(\Map(\D^2,X;k),\Emb(\D^2,X;k)))\ra\Z[\pi_1X\sm1]$ 
    &  \{homotopies between neat disks in $\Emb(\D^2,X;k)\}
    \ra\Z[\pi_1X\sm1]^\sigma$
    & \multirow{2}{*}{Rem.~\ref{rem:Dax-neat}}  \\
    % & relative 
    & \{two homotopic neat disks from $\D(X;k)\}
    \ra\Z[\pi_1X\sm1]^\sigma/\md(\pi_3X)$
    & \\\hline
\multicolumn{3}{c}{Freedman--Quinn invariant for homotopic spheres and neat disks}\\\hline
    \multirow{2}{*}{$\FQ$}
    % & of spheres
    & \{two homotopic spheres from $\S(N)^G\}
    \ra\FF_2[T_N]/\mu_3(\pi_3N)$  
    & Prop.~\ref{prop:disks-spheres}
    \\
    % & of disks
    & \{two homotopic neat disks from $\D(X;k)\} \ra\FF_2[T_X]/\mu_3(\pi_3X)$  
    & Def.~\ref{def:FQ}
    \\\hline
\multicolumn{3}{c}{Wall intersection and reduced self-intersection invariants}\\\hline
$\lambda$
    % & of disks
    & hermitian intersection form $\lambda\colon\pi_2X\times\pi_2X \ra\Z[\pi_1M]$  
    & \multirow{2}{*}{Eq.~\ref{eq:lambdabar}}\\
    $\lambdabar$
    % & of disks
    & reduced intersection form $\lambdabar\colon\pi_2X\times\pi_2X \ra\Z[\pi_1M\sm1]$  \\
    $\mu_2$
    % & of disks
    & quadratic refinement $\mu_2\colon[\D^2,M;k] \ra\Z[\pi_1M\sm1]/\langle\ol{r}{-}r\rangle$  
    & Prop.~\ref{prop-intro:4-term}
    \\
    $\mu_3$
    % & of balls 
    & 3-dimensional analogue $\mu_3\colon[\D^3,P^6] \ra\Z[\pi_1P\sm1]/\langle\ol{r}{+}r\rangle$  
    & Def.~\ref{def:mu_3}
    \\\hline
\end{tabular}
}
\end{center}

%%%%%%%%%%%%%%%%%%%%%
\newpage
\phantomsection
\printbibliography[heading=bibintoc]

\vspace{10pt}

\hrule

\end{document}